\DeclareMathOperator*{\conv}{conv}
\newcommand{\bmtx}{\begin{bmatrix}}
\newcommand{\emtx}{\end{bmatrix}}
\newcommand{\bsmtx}{\left[ \begin{smallmatrix}} 
\newcommand{\esmtx}{\end{smallmatrix} \right]} 
\newcommand{\bmatarray}[1]{\left[\begin{array}{#1}}
\newcommand{\ematarray}{\end{array}\right]}
\newtheorem{assumption}{Assumption}
\newtheorem{lemma}{Lemma}
\newtheorem{theorem}{Theorem}
\newtheorem{definition}{Definition}
\newtheorem{proposition}{Proposition}
\DeclareMathOperator*{\dom}{dom}
\newcommand*{\rom}[1]{\expandafter\@slowromancap\romannumeral #1@}
\newcommand{\remove}[1]{}
\begin{document}

\markboth{Hu, Zhang, Li, Mesbahi, Fazel, Ba\c{s}ar}{Policy Optimization for Learning Control Policies}

\title{Towards a Theoretical Foundation of Policy Optimization for Learning Control Policies
}

\author{Bin Hu$^1$, Kaiqing Zhang$^2$, Na Li$^3$, Mehran Mesbahi$^4$, Maryam Fazel$^5$, and Tamer Ba\c{s}ar$^6$
\affil{$^1$CSL \& ECE, University of Illinois
at Urbana-Champaign, IL, USA, 61801; email: binhu7@illinois.edu}
\affil{$^2$LIDS \& CSAIL, Massachusetts Institute of Technology,  Cambridge, MA, USA, 02139; ECE \& ISR, University of Maryland, College Park, MD, 20740; kaiqing@\{mit,umd\}.edu}
\affil{$^3$SEAS, Harvard University, Cambridge, MA, USA, 02138;nali@seas.harvard.edu}
\affil{$^4$AA, University of Washington, Seattle, WA, USA, 98195; mesbahi@uw.edu}
\affil{$^5$ECE, University of Washington, Seattle, WA, USA, 98195;mfazel@uw.edu}
\affil{$^6$CSL \& ECE, University of Illinois
at Urbana-Champaign, IL, USA, 61801; email: basar1@illinois.edu}}

\begin{abstract}
Gradient-based  methods have been widely used for system design and optimization in diverse application domains.   
Recently, there has been a renewed
interest in studying theoretical  properties of these methods in the context of control and reinforcement learning. 
This article surveys some of the recent developments  on policy optimization, a gradient-based iterative approach for feedback control synthesis, 
popularized by successes of reinforcement learning. 
We take  an interdisciplinary perspective in our exposition that
connects control theory, reinforcement learning, and large-scale optimization. We  review a number of recently-developed theoretical results on the optimization landscape, global convergence, and sample complexity of gradient-based methods for various continuous  control problems such as the linear quadratic regulator (LQR), $\mathcal{H}_\infty$ control, risk-sensitive control, linear quadratic Gaussian (LQG) control, and output feedback synthesis. In conjunction with these optimization results, we  also discuss how direct policy 
optimization handles stability and robustness concerns in learning-based control, two main desiderata in control engineering. 
We conclude the survey by pointing out several challenges and  opportunities at the intersection of 
learning and control.
\end{abstract}

\begin{keywords}
    Policy Optimization, Reinforcement Learning, Feedback Control Synthesis 
\end{keywords}
\maketitle   


\section{Introduction}
Reinforcement learning (RL) has recently shown an impressive performance on a wide range of applications, 
from playing Atari~\cite{mnih2015human,alphastarblog} and mastering the game of Go ~\cite{silver2016mastering,silver2017mastering}, to complex robotic manipulations  \cite{rajeswaran2017learning,lillicrap2015continuous,schulman2015high}.
Key to RL success is the algorithmic framework  of policy optimization (PO), 
where the policy, mapping observations to actions, is parameterized and  directly optimized upon to improve system-level performance.
Mastering Go using PO (combined with techniques such as efficient tree-search) is particularly encouraging,\footnote{Go is considered a challenging game to master, partially as the number of
its legal board positions is significantly larger than the number of atoms in the observable universe.} as
the main idea behind the latter is rather straightforward -- when learning  
has been formalized as minimizing a certain cost as a function of the policy,
devise an iterative procedure on the policy to improve the objective. 
For example, in the policy gradient (PG) variant of PO,
when learning is represented as minimizing a (differentiable) cost $J(K)$ over the policy
$K$, the policy is improved upon via a gradient update of the form $K^{n+1}=K^n-\alpha \nabla J(K^n)$,
for some step size $\alpha$ (also  referred to as the learning rate) and data-driven evaluation of the cost gradient $\nabla J$ at each iterate $n$. In fact, PO provides an umbrella formalism for
not only policy gradient (PG) methods~\cite{sutton2000policy}, but also
actor-critic~\cite{konda2000actor},
trust-region~\cite{schulman2015trust}, proximal PO methods~\cite{schulman2017proximal}. 

More generally, PO provides a streamlined approach to learning-based system design.
For example, PO gives a general-purpose paradigm for 
addressing complex nonlinear dynamics with user-specified cost functions: 
for tasks involving nonlinear dynamics and complex design objectives, one can parameterize the policy as a neural network to be ``trained'' using  gradient-based methods 
to obtain a reasonable solution. The PO perspective can also be 
adopted for other insufficiently parameterized decision problems such
as end-to-end perception-based control~\cite{lee2019stochastic, yarats2019improving, yarats2021mastering}. In this setting, it might be desired
to synthesize a policy directly on images. As such,
one can envision parameterizing a mapping from  pixels (observation) to actions (decisions) as a neural network, and 
learn the corresponding policy using the PO formalism. Lastly, we mention the use of scalable gradient-based algorithms
to efficiently train nonlinear policies on many parameters, making PO suitable for high-dimensional tasks.
Computational flexibility and conceptual accessibility of PO have made it a main workhorse for modern~RL.

In yet another decision theoretic science, PO has a long history in control theory \cite{draper1951principles,whitaker1958design,kalman1960contributions,talkin1961adaptive,levine1970determination,makila1987computational};
in fact, it has been popular among control practitioners when the system model is poorly understood or parameterized.
Nevertheless, despite its generality and flexibility, PO formulation of control synthesis
is typically nonconvex and as such, challenging for obtaining strong performance certificates, rendering it unpopular amongst system theorists.
Since the 1980's,  convex reformulations or relaxations of control problems have become popular due to the
development of convex programming and related global convergence theory \cite{boyd2004convex}. 
It has been realized that many problems in optimal and robust control can be reformulated as convex programs, namely, semidefinite programs (SDP) \cite{befb94,Gahinet1994,Scherer2004}, or relaxed via sum-of-squares (SOS) \cite{sostools04,anderson2015advances}, expressed in terms of ``certificates," e.g.,
matrix inequalities that represent Lyapunov or dissipativity conditions. However, these formulations have limitations when there is deviation from the canonical synthesis problems, e.g., when
there are constraints on the structure of the desired control/policy. When convex reformulations are not available, PO assumes an important role as the main viable option. 
Examples of such scenarios include static output feedback problem \cite{rautert1997computational}, structured $\mathcal{H}_\infty$ synthesis~\cite{apkarian2006nonsmooth,apkarian2008mixed,noll2005spectral,saeki2006static,gumussoy2009multiobjective,arzelier2011h2},
and distributed control~\cite{maartensson2009gradient}, all of significant  importance in  applications. The PO framework is more flexible, as evidenced by the recent advances in deep RL. PO is also more scalable for high-dimensional problems as it does not
generally introduce extra variables in the optimization problems and enjoys a broader range of optimization methods as compared with the SDP or SOS formulations. 
However, as pointed out previously, nonconvexity of the PO formulation, even on relatively simple linear control problems, have made deriving theoretical guarantees for  direct policy optimization challenging, preventing the acceptance of PO as a  mainstream 
control design tool.

In this survey,
our aim is to revisit these issues from a modern optimization perspective, 
and provide a unified perspective on the recently-developed global convergence/complexity theory for PO in the context of control synthesis. Recent theoretical results on PO for particular classes of control synthesis problems, some of which are discussed in this survey, are not only exciting, but also lead to a new research thrust at the interface of control theory and machine learning. This survey includes control synthesis related to linear quadratic regulator theory~\cite{pmlr-v80-fazel18a,bu2019lqr,malik2019derivative,mohammadi2021convergence,furieri2020learning,li2021distributed,hambly2021policy,yang2019provably,jin2020analysis,mohammadi2020linear}, stabilization \cite{perdomo2021stabilizing,ozaslan2022computing,zhao2022sample}, linear robust/risk-sensitive control \cite{zhang2021policy,zhang2020stability,gravell2020learning,zhang2021derivative,zhao2021primal,zhang2021provably,Guo2022hinf,keivan2021model}, Markov jump linear quadratic control~\cite{jansch2020convergence,jansch2020policy,rathod2021global,jansch2022policy}, Lur'e system control  \cite{qu2021exploiting}, output feedback control \cite{feng2019exponential,fatkhullin2020optimizing,zheng2021analysis,duan2021optimization,duan2022optimization,mohammadi2021lack,hu2022connect}, and dynamic filtering \cite{umenberger2022globally}.
Surprisingly, some of these strong global convergence results for PO have been obtained 
in the {\em absence of convexity} 
in the design objective and/or the underlying feasible set.

These global convergence guarantees have a number of implications for learning and control. Firstly, these results facilitate examining other classes of synthesis problems in the same general framework. 
As it will be pointed out in this survey, there is an elegant geometry at play between certificates and controllers in the synthesis process, with immediate algorithmic implications.
Secondly, the theoretical developments in PO 
have created a renewed interest in the control community to examine synthesis of dynamic systems from a complementary perspective, that in our view, is more integrated with learning in general, and RL in particular. This will complement and strengthen the existing connections between RL and control \cite{bucsoniu2018reinforcement,recht2019tour,matni2019self}. Lastly, the geometric analysis of PO-inspired algorithms may shed light on issues in state-of-the-art policy-based RL, critical for deriving guarantees for any subsequent RL-based synthesis procedure for dynamic systems.  

This survey is organized to reflect our perspective -- and our excitement -- on how PO (and in particular PG) methods provide a streamlined approach for system  synthesis, and build a bridge between control and learning.
First, we provide the PO formulations for various control problems in \S\ref{sec:formulation}. Then
 we delve into the PO convergence theory on the classic linear quadratic regulator (LQR) problem in \S\ref{basic-lqr}. As it turns out, a key ingredient for analyzing LQR PO hinges on coerciveness of the cost function and its gradient dominance property (see \S3.2). These properties can then be utilized to devise gradient updates ensuring stabilizing feedback policies at each iteration, and convergence to the globally optimal policy. In \S\ref{beyond-lqr} we highlight some of the challenges
 in extending the LQR PO theory to other classes of problems, including the role of
coerciveness, gradient dominance, smoothness, and the landscape of the optimization problem.
The PO perspective is
then extended to more elaborate synthesis problems 
such as linear robust/risk-sensitive control, dynamic games,
and nonsmooth $\mathcal{H}_\infty$ state-feedback synthesis in \S\ref{sec:advance_cases}. 
Through these extensions, we highlight how variations on the general theme set by the LQR PO
theory can be adopted to address {lack of coerciveness} or nonsmoothness of the objective
in these problems while ensuring the convergence of the iterates to solutions of interest.
This is then followed by examining PO for control synthesis with partial observations,
and in particular, PO theory for linear quadratic Gaussian and output feedback in
\S\ref{advance:partial-observations}. 
Our discussion in \S\ref{advance:partial-observations}
underscores the importance of the underlying geometry of the policy landscape in
developing any PO-based algorithms.
Fundamental connections between PO theory and convex parameterization 
in 
control are discussed in \S\ref{sec:convex_para_grad_dom}.
In particular, it is shown how the geometry of policies and certificates are intertwined through appropriately constructed maps between nonconvex PO formulation of the synthesis problems and the (convex) semidefinite programming parameterizations. This provides a unified approach for analyzing PO in various control problems studied on a case-by-case basis so far. 
In \S\ref{challenges}, we present current challenges and our outlook for a comprehensive PO theory for synthesizing
dynamical systems that ensures stability, robustness, safety, and optimality;
and underscore the challenges in addressing synthesis problems in the face of partial observations, nonlinearities,
and for multiagent settings.
\S\ref{challenges} also examines further connections between PO theory and 
machine learning, and highlights the possibility of integrating model-based \cite{recht2019tour} and model-free methods to achieve the best of both worlds, illustrating
how the main theme of this survey fits within the big picture of learning-based control.

\section{Policy Optimization for Linear  Control: Formulation}\label{sec:formulation}

Control design can generally be formulated as a policy optimization problem of the form,
\begin{align}\label{eq:opt}
    \min_{K\in \mathcal{K}}~~J(K),
\end{align}
where the decision variable $K$ is determined by the controller parameterization (e.g., linear mapping, polynomials,
kernels, neural networks, etc.), the cost function $J(K)$ is some task-dependent control performance measure (e.g.,  tracking errors, closed-loop $\mathcal{H}_2$ or $\mathcal{H}_\infty$ norm, etc.), and the
feasible set $\mathcal{K}$ represents the class of controllers of interest, for example,
ensuring closed loop stability/robustness requirements. 
Such a PO formulation is general, and enables flexible policy parameterizations. For example, consider a modern deep RL setting where one wants to design a policy maximizing some task-dependent reward function for a complicated nonlinear system $x_{t+1}=f(x_t,u_t, w_t)$ with $(x_t,u_t,w_t)$ being the state, action, and disturbance triplet.
PO has served as the main workhorse for addressing such tasks. Specifically, one just needs to  parameterize the policy $K$ as a (deep) neural network and then apply iterative PO algorithms such as trust-region policy optimization (TRPO)~\cite{schulman2015trust} and proximal policy optimization (PPO) \cite{schulman2017proximal} to learn the optimal weights.

The focus of this survey article is the recently-developed  (global) convergence, complexity, and landscape theory of PO on classic control tasks  including LQR, risk-sensitive/robust control, and output feedback control. In this section, we formulate these linear control problems as PO via properly selecting $(K,J,\mathcal{K})$ in Equation~\ref{eq:opt}. 

\vspace{5pt} 
\noindent\textbf{Case I: Linear quadratic regulator (LQR).} There are several ways to formulate the LQR problem. For simplicity, we start by considering a discrete-time linear time-invariant (LTI) system $x_{t+1}=Ax_t+Bu_t$, where $x_t$ is the state and $u_t$ is the control action. 
The design objective is to choose the control actions $\{u_t\}$ to
minimize a quadratic cost function $J:=\mathbb{E}_{x_0\sim \mathcal{D}}\sum_{t=0}^\infty \left(x_t^\tp Q x_t+ u_t^\tp R u_t\right)$ with  $Q\succeq 0$ and $R\succ 0$ being pre-selected cost weighting matrices. In this setting, the only randomness stems from the initial condition $x_0$, which is sampled from a certain distribution $\mathcal{D}$ with a full rank covariance matrix. It
is well known that under some standard stabilizability and detectability assumptions the optimal cost is finite and can be achieved by a linear
state-feedback controller of the form $u_t=-Kx_t$. Therefore, we can formulate the LQR problem as a special case of the PO problem as in Equation~\ref{eq:opt}. Specifically, the decision variable $K$ is simply the feedback gain matrix. 
Under a fixed policy $K$, we have $u_t=-Kx_t$ for all $t$, and
the LQR cost can be rewritten as
$J(K)=\mathbb{E}_{x_0\sim \mathcal{D}}\left[\sum_{t=0}^\infty x_0^\tp ((A-BK)^\tp)^t (Q+K^\tp R K)(A-BK)^t x_0\right]$,
which is a function of $K$. 
This cost can also be computed as $J(K)=\tr(P_K \Sigma_0)$, where $\Sigma_0=\mathbb{E}x_0 x_0^\tp$ is the (full-rank) covariance matrix of $x_0$, and $P_K$ is the solution of the following Lyapunov equation:
\begin{align}\label{eq:lyapu1}
(A-BK)^\tp P_K (A-BK)+Q+K^\tp R K =P_K.
\end{align}
The above cost $J(K)$ is only well defined when the closed-loop system matrix $(A-BK)$ is Schur stable, i.e., when the spectral radius satisfies $\rho(A-BK)<1$. Therefore, one can define the feasible set $\mathcal{K}$  as,
\#\label{equ:def_stabilizing_K}
\mathcal{K}=\{K:\rho(A-BK)<1\}.
\#
Now we can see that the LQR problem is a special case of the PO problem in Equation~\ref{eq:opt}.
There are several other slightly different ways to formulate the LQR problem. In an alternative formulation,  we can add stochastic process noise and consider the LTI system,
\#\label{eq:LTI2}
x_{t+1}=Ax_t+B u_t+w_t,
\#
where
the disturbance $\{w_t\}$ is  a zero-mean i.i.d. process with a full rank covariance matrix $W$. The design objective is then to choose $\{u_t\}$ to minimize the time-average cost
\begin{align}\label{eq:averagecost}
J := \lim_{T \rightarrow \infty }\frac{1}{T}\mathbb{E} \left[\sum_{t=0}^{T-1} \left(x^\tp_t Q x_t + u^\tp_t R u_t\right)\right],
\end{align}
where $Q\succeq 0$ and $R\succ 0$
are pre-selected weighting matrices. 
Again, it suffices to parameterize the policy as $u_t=-Kx_t$. For a fixed policy $K$, the cost in Equation \ref{eq:averagecost} can be computed as $J(K)=\tr(P_K W)$, where $P_K$ is the solution for Equation \ref{eq:lyapu1}. Again, the cost is well defined only for $K$ satisfying $\rho(A-BK)<1$.
This setting leads to almost the same PO formulation as before.
Similarly, discounted LQR can be formulated as PO.

\vspace{5pt}\noindent\textbf{Case II: Linear risk-sensitive/robust control.} One can enforce risk-sensitivity and robustness via the formulation of linear exponential quadratic Gaussian (LEQG) \cite{jacobson1973optimal} or $\mathcal{H}_\infty$ control \cite{zhou1996robust}, respectively.  
For linear risk-sensitive control, we still consider the LTI system as in Equation \ref{eq:LTI2} with $w_t\sim \mathcal{N}(0,W)$ being an i.i.d. Gaussian noise,
and the design objective is to choose control actions $\{u_t\}$ to
minimize an exponentiated quadratic cost,
\begin{align}\label{equ:def_obj} 
J:=\limsup_{T\to\infty}~~\frac{1}{T}\frac{2}{\beta}\log\mathbb{E}\exp\bigg[\frac{\beta}{2} \sum_{t=0}^{T-1}\left(x_t^\tp Q x_t+u_t^\tp R u_t\right) \bigg],
\end{align}
where $\beta$ is the parameter quantifying the intensity of risk-sensitivity, and the expectation  is taken over the distribution for $x_0$ and $w_t$ for all $t\geq 0$. 
One typically chooses $\beta>0$ to make the control ``risk-averse.''  As $\beta\to 0$, the objective in Equation \ref{equ:def_obj} reduces to the LQR cost. The above LEQG problem is also a special case of the PO problem as in Equation \ref{eq:opt}. It is known that the optimal cost can be achieved by a linear state-feedback controller.  Again, one can just parameterize the controller as $u_t=-Kx_t$, where the gain matrix $K$ is the decision variable. Then the cost function can be specified as $J(K)=-\frac{1}{\beta}\log\det (I-\beta P_K W)$,
where $P_K$ is the unique stabilizing  solution to the  algebraic  Riccati equation,\footnote{The solution $P_K\succeq 0$ satisfies $\rho\big((A-BK)^\tp(I-\beta P_KW)^{-1}\big)<1$, and $W^{-1}-\beta P_K\succ 0$.} 
\begin{align*}
P_K=Q+K^\tp RK+(A-BK)^\tp\big[P_K- P_KW^{\frac{1}{2}}(-\beta^{-1}I+ W^{\frac{1}{2}}P_KW^{\frac{1}{2}})^{-1}W^{\frac{1}{2}}P_K\big](A-BK).
\end{align*}
Notice that in this case $J$ is well defined only when $K$ is in the following feasible set,
\begin{align}
\mathcal{K}=\left\{K: \rho(A-BK)<1, \,\mbox{and}\,\, \norm{(Q+K^\tp R K)^{\frac{1}{2}}(zI-A+BK)W^{\frac{1}{2}}}_\infty<\frac{1}{\sqrt{\beta}}\right\},
\end{align}
where $\norm{\cdot}_\infty$ denotes the $\mathcal{H}_\infty$ norm of a given discrete-time transfer function.
Hence the LEQG problem is a special case of the PO problem with $J$ and $\mathcal{K}$ as defined above. 
For LEQG, the $\mathcal{H}_\infty$  constraint $\norm{(Q+K^\tp R K)^{\frac{1}{2}}(zI-A+BK)W^{\frac{1}{2}}}_\infty<\frac{1}{\sqrt{\beta}}$ is implicitly required by the problem formulation.
Importantly, the LEQG problem can be viewed as a special case of the more general mixed $\mathcal{H}_2/\mathcal{H}_\infty$ design problem studied in robust control. In this survey article, we will cover two important robust control settings, namely the mixed $\mathcal{H}_2/\mathcal{H}_\infty$ design and the $\mathcal{H}_\infty$ state-feedback synthesis. 
For mixed $\mathcal{H}_2/\mathcal{H}_\infty$ design, consider the system below, where $w_t$ is the disturbance and $z_t$ is the controlled output:
\begin{align}
x_{t+1}=Ax_t+Bu_t+Dw_t,\quad z_t=Cx_t+E u_t.\label{equ:def_mixed_discret}
\end{align}
It is standard to assume $E^\tp [C \,\, E]=[0 \,\, R]$ for some $R\succ 0$.
The mixed design objective is to synthesize a linear state-feedback controller that minimizes an upper bound on the $\mathcal{H}_2$ cost and satisfies an additional $\mathcal{H}_\infty$-robustness requirement on the channel from $w_t$ to $z_t$. The $\mathcal{H}_\infty$ constraint is posed explicitly and is powerful in guaranteeing robust stability in the presence of any small gain type of uncertainty, including being time-varying, dynamic, or nonlinear. For the mixed design problem, the robustness constraint is directly enforced on $K$, and hence the feasible set $\mathcal{K}$ is modified as,
\begin{align}\label{equ:feasible_K}
\mathcal{K}=\left\{K: \rho(A-BK)<1, \,\mbox{and}\,\, \norm{(C-EK)(zI-A+BK)D}_\infty<\gamma\right\},
\end{align}
where $\gamma$ quantifies the robustness level. The smaller $\gamma$ is, the more robust the system is in the $\mathcal{H}_\infty$ sense (since it can tolerate the small gain uncertainty at the level $\frac{1}{\gamma}$ by the Small Gain Theorem). There exist several objective functions that upper-bound the $\mathcal{H}_2$ cost \cite{mustafa1989relations,mustafa1991lqg}, and a common one is  $J(K)=\tr(P_K DD^\tp)$, where $P_K$ is the solution  to the 
above
Riccati equation with $Q=C^\tp C$, $\gamma=1/\sqrt{\beta}$, $W=DD^\tp$. Notice that the mixed $\mathcal{H}_2/\mathcal{H}_\infty$ control aims at improving the average $\mathcal{H}_2$ performance while ``maintaining" a certain level of robustness by keeping the closed-loop $\mathcal{H}_\infty$ norm to be smaller than a pre-specified number. In contrast, the $\mathcal{H}_\infty$ state-feedback synthesis aims at ``improving" the system robustness and the worst-case performance via achieving the smallest closed-loop $\mathcal{H}_\infty$ norm. For simplicity, consider the LTI system $x_{t+1}=Ax_t+Bu_t+w_t$ initialized at $x_0=0$.
The design objective of $\mathcal{H}_\infty$ control is to choose $\{u_t\}$ to minimize the quadratic cost $J:=\sum_{t=0}^\infty (x_t^\tp Q x_t+u_t^\tp R u_t)$ in the presence of the worst-case $\ell_2$ disturbance satisfying $\sum_{t=0}^\infty \norm{w_t}^2\le 1$.
This problem can be reformulated as the PO problem
with the cost $J(K)$ being defined as the closed-loop $\mathcal{H}_\infty$ norm given below 
\begin{align}\label{eq:hinfcost}
J(K)=\sup_{\omega\in[0, 2\pi]}\lambda_{\max}^{1/2}\big((e^{-j\omega}I-A+BK)^{-\tp}(Q+K^{\tp}RK)(e^{j\omega}I-A+BK)^{-1}\big).
\end{align}
The reason is that the above cost actually satisfies 
\begin{align*}
J^2(K)=\max_{\sum_{t=0}^\infty \norm{w_t}^2\le 1} \sum_{t=0}^\infty x_t^\tp (Q+K^\tp R K) x_t=\max_{\sum_{t=0}^\infty \norm{w_t}^2\le 1} \sum_{t=0}^\infty (x_t^\tp Q x_t+u_t^\tp R u_t).
\end{align*}
The above cost is well defined only for $K$ satisfying $\rho(A-BK)<1$.
Therefore, minimizing the $\mathcal{H}_\infty$ cost function defined by Equation \ref{eq:hinfcost} over $\mathcal{K}$ given by Equation \ref{equ:def_stabilizing_K}  leads to a policy that minimizes the quadratic cost under the worst-case $\ell_2$ disturbance.

\vspace{5pt}\noindent\textbf{Case III: Linear quadratic Gaussian (LQG) and output feedback control.} Consider the following LTI  system which can only be partially observed:
\begin{subequations}\label{eq:LQG-K}
\begin{align}
x_{t+1}&=Ax_t+Bu_t+w_t,\\
y_t&=Cx_t+v_t.
\end{align}
\end{subequations}
Here, $w_t$ and $v_t$ are zero-mean white Gaussian noises
with covariance matrices $W\succeq 0$ and $V \succ 0$. At step $t$, one can only observe $y_t$, and the state $x_t$ is not directly measured.
The design objective is to choose actions $\{u_t\}$ to minimize the time-averaged cost defined in Equation \ref{eq:averagecost} given such partial observation information. Again, $Q\succeq 0$ and $R\succ 0$ 
are pre-selected weighting matrices. 
It is assumed that the pairs $(A,B)$ and $(A,W^{1/2})$ are controllable, and $(C,A)$ and $(Q^{1/2},A)$ are observable.
This problem can also be formulated as a special case of the PO formulation given by Equation \ref{eq:opt}.  Under our assumptions, it suffices to consider (full-order) dynamic controllers of the form,
\begin{equation}\label{eq:Dynamic_Controller}
    \begin{aligned}
         \xi_{t+1} = A_{K}\xi_t + B_{K}y_t,~~~~~~
        u_t = C_{K}\xi_t,
    \end{aligned}
\end{equation}
where $\xi_t$ is the internal state of the controller and has the same dimension as $x_t$.  For convenience, we encode the dynamic controller as,
\begin{align}
K:=\begin{bmatrix}
      0 & C_{K} \\
    B_{K} & A_{K}
    \end{bmatrix}.
\end{align}
The cost function $J(K)$ is well defined when the closed-loop system is stable, and hence, the feasible set should be specified as,
\begin{align}
\label{eq:LQG-constraint}
\mathcal{K}=\left\{K: \bmat{A & B C_K\\ B_K C & A_K}\,\,\mbox{is Schur stable} \right\}. 
\end{align}
For any $K\in\mathcal{K}$, the cost $J(K)$ can be represented as,
\begin{equation}\label{eq:LQG_cost_formulation_discrete}
J(K)
=
\operatorname{Tr}
\left(
\begin{bmatrix}
Q & 0 \\ 0 & C_{K}^\tp R C_{K}
\end{bmatrix} X_K\right)
=
\operatorname{Tr}
\left(
\begin{bmatrix}
W & 0 \\ 0 & B_{K} V B_{K}^\tp
\end{bmatrix} Y_K\right),
\end{equation}
where $X_{K}$ and $Y_{K}$ are the unique PSD 
solutions to the following Lyapunov equations,
\begin{subequations}
\label{eq:LQG_LyapunovX_discrete}
\begin{align}
X_{K} &= \begin{bmatrix} A &  BC_{K} \\ B_{K} C & A_{K} \end{bmatrix}X_{K}\begin{bmatrix} A &  BC_{K} \\ B_{K} C & A_{K} \end{bmatrix}^\tp +  \begin{bmatrix} W & 0 \\ 0 & B_{K}VB_{K}^\tp \end{bmatrix}, 
\\
Y_{K} &= \begin{bmatrix} A &  BC_{K} \\ B_{K} C & A_{K} \end{bmatrix}^\tp Y_{K}\begin{bmatrix} A &  BC_{K} \\ B_{K} C & A_{K} \end{bmatrix} +   \begin{bmatrix} Q & 0 \\ 0 & C_{K}^\tp R C_{K} \end{bmatrix}.
\end{align}
\end{subequations}
Thereby, the LQG design problem can be formulated as a special case of PO.  It is possible to use other control parameterizations and enforce more structures on $K$. 
This will lead to PO formulations for general output feedback control. Such formulations are particularly useful for decentralized control. 

\vspace{5pt}
For all three cases, the PO formulation is nonconvex in the policy space  ~\cite{peres1994alternate,pmlr-v80-fazel18a}. This is in contrast to convex reformulations of these problems.   
Next, we will review the recently-developed PO theory for Cases I, II, and III in Sections~\ref{basic-lqr}, \ref{sec:advance_cases}, and \ref{advance:partial-observations}, respectively.

\section{Case I: Global Convergence and Complexity of PO for  LQR } \label{basic-lqr}

LQR provides arguably the most fundamental optimal control formulation. A main challenge for the PO formulation of LQR is that the stability constraints are nonconvex in the policy space.  The global convergence and complexity of PO methods on LQR have not been established until very recently.  We review such results in this section.

\subsection{Background: Optimization and Complexity}\label{sec:math_background}

Consider the constrained optimization problem $\min_{K\in\mathcal{K}} J(K)$ with $\mathcal{K}$ being nonconvex. 
If the feasible set $\mathcal{K}$ is open and the optimal value of $J$ is achieved by some interior point $K^*$ in $\mathcal{K}$, then we have  $\nabla J(K^*)=0$ (in this case, the KKT condition reduces to the first-order optimality condition for unconstrained problems), and it is possible to solve for $K^*$ via applying an iterative gradient-based algorithm with the update rule 
$K^{n+1}=K^n-\alpha F^n$, where $K^n$ denotes the controller parameter at iteration $n$, and $F^n$ is some descent direction of the cost $J$. The most common example of $F^n$ is the gradient direction $\nabla J(K^n)$ at~$K^n$. Some other examples include the natural gradient direction, Gauss-Newton (or other quasi-Newton) directions (see \S\ref{sec:LQR_results} for more details). 
 It is important to know whether and how fast $\{K^n\}$ converges to $K^*$. We will introduce one important optimization result for coercive and/or gradient-dominant function $J(K)$. 
 
 \begin{definition}[Coercive and gradient dominant properties]\label{def_coer} We call a function $J(K)$ 
 \begin{itemize}
     \item[i)] \textit{coercive} on $\mathcal{K}$ if for any sequence $\{K^l\}_{l=1}^\infty\subset \mathcal{K}$ we have
\begin{equation*}
    J(K^l) \rightarrow +\infty 
\end{equation*}
if either $\|K^l\|_2 \rightarrow +\infty$, or  $K^l$ converges to an element on the boundary $\partial \mathcal{K}$.
\item[ii)] $\mu$-\textit{gradient dominant} of degree $p$, if it is continuously differentiable and satisfies
\begin{align}\label{eq:grad_dominance}
J(K)-J(K^*)\le \frac{1}{2\mu}\norm{\nabla J(K)}_F^p, \quad \forall K\in \mathcal{K}, 
 \end{align}
 where $\mu$ is some positive constant, and $K^*$ is an optimal solution of $J(K)$ over  $\mathcal{K}$.\footnote{This property (also referred to as the Polyak- \L ojasiewicz (PL) Condition) appears commonly in the optimization literature \cite{nesterov2006cubic,karimi2016linear,LiPong2018KLexponent}, but is often used only locally. Here we are interested in special problems where this property holds globally.}
 \end{itemize}
 \end{definition}

 If $J(K)$ is coercive, then it serves as a barrier function over the feasibility set $\mathcal{K}$, and hence projection is not needed for maintaining feasibility. In addition, gradient dominance is useful for establishing global convergence.
 The following optimization result is fundamental and useful.

\begin{theorem}\label{theorem:main_technical_thm}
Suppose $J(K)$ is coercive.  Assume further that
$J$ is twice continuously differentiable over $\mathcal{K}$. Then the following statements hold:  
\begin{enumerate}
\item The sublevel set $\mathcal{K}_\gamma:=\{K\in\mathcal{K}:J(K)\le \gamma\}$ is compact.
\item The function $J(K)$ is $L$-smooth on $\mathcal{K}_\gamma$, and the constant $L$ depends on $\gamma$ and the problem parameters. Specifically, for any $(K, K')$ satisfying $tK+(1-t)K'\in \mathcal{K}_\gamma$ $\forall t\in [0,1]$, the following inequality holds
\begin{align}
\label{eq:smooth1}
    J(K')\le J(K)+ \langle\nabla J(K),(K'-K)\rangle+\frac{L}{2}\|K'-K\|_F^2.
\end{align}
\item Consider the gradient descent method 
\#\label{equ:PG_update}
K^{n+1}=K^n-\alpha \nabla J(K^n).
\# 
Suppose $K^0\in\mathcal{K}$.
Let $\gamma_0=J(K^0)$.
Suppose $L$ is the smoothness constant of $J(K)$ on $\mathcal{K}_{\gamma_0}$. Then, for any $0<\alpha<\frac{2}{L}$, we have $K^n\in \mathcal{K}$ for all $n$. In addition, we have $\nabla J(K^n)\rightarrow 0$ and the following convergence rate bound holds with $C=\alpha-\frac{L\alpha^2}{2}>0$.
\begin{align}
    \min_{0\le l \le k} \norm{\nabla J (K^l)}_F^2\le \frac{\gamma_0}{C(k+1)}.
\end{align}
\item If the function $J$ also satisfies the \emph{gradient dominance} property with degree $2$, then we have the linear convergence 
\begin{align}\label{eq:con1}
J(K^n)-J(K^*)\le (1-2\mu\alpha+\mu L\alpha^2)^n (J(K^0)-J(K^*)).
\end{align}
\end{enumerate}
\end{theorem}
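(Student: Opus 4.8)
The plan is to establish the four items in order, the third being the crux.

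\emph{Item 1 (compactness).} Using coerciveness, I would show $\mathcal{K}_\gamma$ is closed and bounded in the finite-dimensional matrix space, hence compact. If some $\{K^l\}\subset\mathcal{K}_\gamma$ had $\|K^l\|_2\to+\infty$, coerciveness would force $J(K^l)\to+\infty$, contradicting $J(K^l)\le\gamma$; so $\mathcal{K}_\gamma$ is bounded. For closedness, take $K^l\in\mathcal{K}_\gamma$ with $K^l\to K$: the limit $K$ cannot lie on $\partial\mathcal{K}$, else coerciveness again gives $J(K^l)\to+\infty$; hence $K\in\mathcal{K}$ (using openness of $\mathcal{K}$, as in the setting of this section), and continuity of $J$ yields $J(K)=\lim_l J(K^l)\le\gamma$, i.e. $K\in\mathcal{K}_\gamma$.

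\emph{Item 2 ($L$-smoothness).} Since $J$ is twice continuously differentiable, $\nabla^2 J$ is continuous and hence bounded in operator norm by some $L=L(\gamma)$ on the compact set $\mathcal{K}_\gamma$; this is where the dependence of $L$ on $\gamma$ and the problem data enters. For $(K,K')$ with $tK+(1-t)K'\in\mathcal{K}_\gamma$ for all $t\in[0,1]$, a second-order Taylor expansion with integral remainder along that segment together with the Hessian bound gives $J(K')\le J(K)+\langle\nabla J(K),K'-K\rangle+\frac{L}{2}\|K'-K\|_F^2$.

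\emph{Item 3 (feasibility of the iterates and the rate) — the main obstacle.} The difficulty is that Item 2 only applies when the whole segment between consecutive iterates stays in $\mathcal{K}_{\gamma_0}$, whereas a priori we do not even know $K^{n+1}$ is feasible. I would argue by induction on $n$: assume $K^n\in\mathcal{K}_{\gamma_0}$, and (if $\nabla J(K^n)=0$ the iterate is stationary and there is nothing to prove) otherwise consider $\phi(s):=K^n-s\alpha\nabla J(K^n)$, $s\in[0,1]$, and let $T$ be the supremum of $t\in[0,1]$ with $\phi(s)\in\mathcal{K}_{\gamma_0}$ for all $s\in[0,t]$. Closedness of $\mathcal{K}_{\gamma_0}$ makes the supremum attained, so $\phi([0,T])\subset\mathcal{K}_{\gamma_0}$; since this image contains the segment joining $K^n$ to any $\phi(s)$ with $s\le T$, Item 2 applies and gives $J(\phi(s))\le J(K^n)-\big(s\alpha-\tfrac{L}{2}s^2\alpha^2\big)\|\nabla J(K^n)\|_F^2$ for $s\in[0,T]$. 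For $0<\alpha<2/L$ the coefficient $s\alpha-\tfrac{L}{2}s^2\alpha^2$ is strictly positive on $(0,1]$, so $J(\phi(s))<J(K^n)\le\gamma_0$ strictly there. If $T<1$, then $\phi(T)\in\mathcal{K}$ with $J(\phi(T))<\gamma_0$, and by openness of $\mathcal{K}$ and continuity of $J$, $\phi(s)$ stays in $\mathcal{K}$ with $J(\phi(s))<\gamma_0$ for $s$ slightly beyond $T$, contradicting maximality; hence $T=1$, so $K^{n+1}\in\mathcal{K}_{\gamma_0}\subset\mathcal{K}$ and $J(K^{n+1})\le J(K^n)-C\|\nabla J(K^n)\|_F^2$ with $C=\alpha-\tfrac{L\alpha^2}{2}>0$. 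Telescoping over $l=0,\dots,k$ and using $J\ge 0$ (valid for the control costs of interest; in general replace $\gamma_0$ by $\gamma_0-\inf_{\mathcal{K}}J$) gives $\sum_{l=0}^k\|\nabla J(K^l)\|_F^2\le\gamma_0/C$, hence $\min_{0\le l\le k}\|\nabla J(K^l)\|_F^2\le\gamma_0/(C(k+1))$ and, the series being convergent, $\nabla J(K^n)\to 0$.

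\emph{Item 4 (linear convergence).} Rewrite degree-$2$ gradient dominance as $\|\nabla J(K)\|_F^2\ge 2\mu\,(J(K)-J(K^*))$. Substituting into the per-step decrease $J(K^{n+1})-J(K^*)\le (J(K^n)-J(K^*))-C\|\nabla J(K^n)\|_F^2$ from Item 3 gives $J(K^{n+1})-J(K^*)\le(1-2\mu C)(J(K^n)-J(K^*))$; since $2\mu C=2\mu\alpha-\mu L\alpha^2$, iterating this contraction yields the stated $(1-2\mu\alpha+\mu L\alpha^2)^n$ rate. The only point needing care is that Item 3 has already pinned the whole trajectory inside $\mathcal{K}_{\gamma_0}$, so the descent inequality — and therefore the contraction — is legitimate at every step.
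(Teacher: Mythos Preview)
Your proof is correct and matches the paper's argument for Items~1, 2, and~4 almost verbatim (compactness via coerciveness; Hessian bound on a compact set plus mean-value/Taylor; gradient dominance plugged into the per-step decrease).

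For Item~3 you take a genuinely different route. The paper argues feasibility by enlarging the sublevel set: it picks $\alpha=\tfrac{2}{L+\omega}$, finds $c>0$ with $\|\nabla^2 J\|\le L+\omega$ on $\mathcal{K}_{\gamma_0+c}$, uses compactness to get a strictly positive distance $\delta$ between $\mathcal{K}_{\gamma_0}$ and the complement of $\mathcal{K}_{\gamma_0+c}$, and then subdivides the gradient step into finitely many sub-steps of length at most $\min\{0.9\delta/\|\nabla J(K)\|_F,\,2/(L+\omega)\}$, each of which is shown to land back in $\mathcal{K}_{\gamma_0}$. You instead run a single supremum/continuity argument directly on $\mathcal{K}_{\gamma_0}$: define $T=\sup\{t:\phi([0,t])\subset\mathcal{K}_{\gamma_0}\}$, use closedness to include the endpoint, apply the descent inequality on $[0,T]$, and derive a contradiction if $T<1$. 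Your approach is shorter and avoids the auxiliary buffer set and the subdivision bookkeeping; the paper's version, on the other hand, makes the ``room to move'' quantitative (the $\delta$-separation), which can be handy when one later wants explicit step-size bounds or perturbation/zeroth-order analyses. One small point worth making explicit in your argument: you should note that $T>0$. At the very first iterate $J(K^0)=\gamma_0$, so $J(\phi(T))<\gamma_0$ is not automatic at $T=0$; it follows because $\tfrac{d}{ds}J(\phi(s))\big|_{s=0}=-\alpha\|\nabla J(K^n)\|_F^2<0$ and $\mathcal{K}$ is open, forcing $\phi(s)\in\mathcal{K}_{\gamma_0}$ for small $s>0$.
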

\begin{proof}
This result is important, so a proof is included for illustrative purposes.
Statement~1 can be proved using  the continuity and coerciveness of $J(K)$, and is actually a direct consequence of \cite[Proposition 11.12]{bauschke2011convex}.

Since $J$ is twice continuously differentiable,  we know that the function $\norm{\nabla^2 J(K)}$ (with $\norm{\cdot}$ being the operator norm) is continuous. By Weierstrass theorem, we know $\norm{\nabla^2 J(K)}$ has to be bounded on the compact set $\mathcal{K}_\gamma$. We denote this uniform upper bound as $L$, and hence $J$ is $L$-smooth on $\mathcal{K}_\gamma$.  By mean value theorem, Equation \ref{eq:smooth1} holds as desired. This proves Statement 2.

The proof of Statement 3 is based on smoothness, and we will use standard arguments.
Suppose we have chosen $\alpha=\frac{2}{L+\omega}$ for some positive constant $\omega>0$. 
First, we need to show that given $K\in \mathcal{K}_{\gamma_0}$, the line segment connecting $K$ and $K'=K-\alpha \nabla J(K)$ is also in $\mathcal{K}_{\gamma_0}$. By continuity of $\norm{\nabla^2 J(K)}$ and $J(K)$, there exists a small constant $c>0$ such that $\norm{\nabla^2 J(K)}\le L+\omega$ for all $K\in \mathcal{K}_{\gamma_0+c}$. Denote the closure of the complement of $\mathcal{K}_{\gamma_0+c}$ as $S_1$. Obviously,  $\mathcal{K}_{\gamma_0}\cap S_1$ is empty. Since $\mathcal{K}_{\gamma_0}$ is compact, 
we know the distance between $\mathcal{K}_{\gamma_0}$ and $S_1$ is strictly positive. We denote this distance as $\delta$. Let us choose $\tau=\min\{0.9\delta/\norm{\nabla J(K)}_F \,,2/(L+\omega)\}$. Clearly, the line segment between $K$ and $(K-\tau\nabla J(K))$ is in $\mathcal{K}_{\gamma_0+c}$. Notice that  $\|\nabla^2 J(K)\|\le L+\omega$ for all $K\in \mathcal{K}_{\gamma_0+c}$, and hence we have
\begin{align*}
J(K-\tau\nabla J(K)) \le  J(K)+ \langle\nabla J(K), K-\tau\nabla J(K)-K\rangle
+\frac{L+\omega}{2}\|K-\tau\nabla J(K)-K\|_F^2,
\end{align*}
which leads to $J(K-\tau\nabla J(K))\le  J(K)+ \left(-\tau+\frac{(L+\omega)\tau^2}{2}\right)\|\nabla J(K)\|_F^2$.
As long as  $\tau\le 2/(L+\omega)$, we have $-\tau+\frac{(L+\omega)\tau^2}{2}\le 0$ and $J(K-\tau\nabla J(K))\le J(K)\le \gamma_0$. Hence we have $K-\tau\nabla J(K)\in \mathcal{K}_{\gamma_0}$. Actually, it is straightforward to see that the line segment between $K$ and $(K-\tau \nabla J(K))$ is in $\mathcal{K}_{\gamma_0}$ by varying $\tau$.
The rest of the proof follows from induction.
We can apply the same argument to show that the line segment between $(K-\tau\nabla J(K))$ and $(K-2\tau\nabla J(K))$ is also in $\mathcal{K}_{\gamma_0}$. This means that the line segment between $K$ and $(K-2\tau\nabla J(K))$ is in $\mathcal{K}_{\gamma_0}$. Since $\tau>0$, we only need to apply the above argument for finite times and then will be able to show that the line segment between $K$ and $(K-\alpha \nabla J(K))$ is in $\mathcal{K}_{\gamma_0}$ for $\alpha= \frac{2}{L+\omega}$. 
Now we can apply Equation \ref{eq:smooth1} to show the convergence result.
Since $\|\nabla^2 J(K)\|\le L$ for all $K\in \mathcal{K}_{\gamma_0}$, we can use the mean value theorem to show
\begin{align*}
    J(K')\le  J(K)+ \langle \nabla J(K), K'-K\rangle+\frac{L}{2}\|K'-K\|_F^2= J(K)+\left(-\alpha+\frac{L\alpha^2}{2}\right)\|\nabla J(K)\|_F^2,
\end{align*}
which can be summed over the window from $0$ to $k$ to get the desired convergence result.

Finally, we can combine the gradient dominance  inequality with the above smoothness inequality to show $J(K')-J(K)\le -(2\mu\alpha-\mu L\alpha^2)(J(K)-J(K^*))$. This immediately leads to Equation \ref{eq:con1}. Our proof is complete.
\end{proof}
Next, we will show that the convergence/complexity of the gradient descent method on LQR follows as a consequence of the above result.

\subsection{PO Theory for LQR}\label{sec:LQR_results} 
There are multiple ways to show
the global convergence/complexity of the gradient descent method on the LQR problem \cite{pmlr-v80-fazel18a,bu2019lqr,fatkhullin2020optimizing,mohammadi2019global}. In this section, we
review one proof which is based on Theorem \ref{theorem:main_technical_thm}. Interestingly, the LQR cost is coercive, real analytic, and gradient dominant so that Theorem \ref{theorem:main_technical_thm} can be directly applied, though the problem is nonconvex in the  parameter $K$ \cite{peres1994alternate,pmlr-v80-fazel18a}. 
Recall that the LQR cost can be computed as $J(K)=\tr(P_K \Sigma_0)$, where $\Sigma_0=\mathbb{E}x_0 x_0^\tp$ and $P_K$ satisfies
$(A-BK)^\tp P_K (A-BK)+Q+K^\tp R K =P_K$. 
For simplicity, we assume here that $Q\succ 0$, following \cite{pmlr-v80-fazel18a}\footnote{The results can be generalized to the cases where $Q\succeq 0$ \cite{bu2019lqr} and even  $Q$ being indefinite \cite{bu2020global}.}. The following result holds.

\begin{lemma}\label{lemma:coercive}
The LQR cost satisfies the following properties. The cost function $J$ is:  
\begin{enumerate}
   \item Real analytical and hence twice continuously differentiable. 
    \item Coercive over the feasible set $\mathcal{K}$. 
\item $\mu$-gradient dominant with 
$\mu=\frac{2(\sigma_{\min}(\mathbb{E}x_0 x_0^\tp))^2\sigma_{\min}(R)}{\norm{\Sigma_{K^*}}}$, where $\sigma_{\min}$ and $\norm{\cdot}$ denote the smallest and largest singular values, respectively.
\end{enumerate}
\end{lemma}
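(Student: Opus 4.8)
The plan is to establish the three properties in turn, using the explicit formula $J(K)=\tr(P_K\Sigma_0)$ together with the Lyapunov characterization $(A-BK)^\tp P_K(A-BK)+Q+K^\tp RK=P_K$.

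\textbf{Step 1 (real analyticity).} First I would observe that on the open set $\mathcal{K}=\{K:\rho(A-BK)<1\}$ the discrete Lyapunov equation has a unique solution $P_K$, which can be written in closed form as the convergent series $P_K=\sum_{t=0}^\infty ((A-BK)^\tp)^t(Q+K^\tp RK)(A-BK)^t$. Alternatively, vectorizing the Lyapunov equation gives $\mathrm{vec}(P_K)=(I-(A-BK)^\tp\otimes(A-BK)^\tp)^{-1}\mathrm{vec}(Q+K^\tp RK)$; since $\rho(A-BK)<1$ ensures the matrix being inverted is nonsingular, and since matrix inversion and all the other operations are real-analytic in the entries of $K$, the map $K\mapsto P_K$ is real-analytic on $\mathcal{K}$, hence so is $J(K)=\tr(P_K\Sigma_0)$. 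In particular $J$ is twice continuously differentiable, which is what Theorem \ref{theorem:main_technical_thm} requires.

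\textbf{Step 2 (coerciveness).} I would verify the two cases in Definition \ref{def_coer}(i). Since $Q\succ 0$ and $\Sigma_0\succ 0$, from the series representation $P_K\succeq Q+K^\tp RK\succeq K^\tp RK$, so $J(K)=\tr(P_K\Sigma_0)\ge \sigma_{\min}(\Sigma_0)\tr(K^\tp RK)\ge \sigma_{\min}(\Sigma_0)\sigma_{\min}(R)\|K\|_F^2$, which $\to+\infty$ as $\|K\|_2\to\infty$. For the boundary case: if $K^l\to \bar K\in\partial\mathcal{K}$, then $\rho(A-B\bar K)=1$ by continuity of the spectral radius (the boundary is exactly the set where $\rho=1$, since $\mathcal{K}$ is open). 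I would argue $J(K^l)\to+\infty$ by contradiction: if $J(K^l)$ stayed bounded along a subsequence, then $\tr(P_{K^l}\Sigma_0)$ is bounded, and since $\Sigma_0\succ 0$ this forces $\{P_{K^l}\}$ to be bounded; passing to a further subsequence $P_{K^l}\to \bar P\succeq Q\succ 0$, and taking limits in the Lyapunov equation gives $(A-B\bar K)^\tp\bar P(A-B\bar K)+Q+\bar K^\tp R\bar K=\bar P$, i.e. $\bar P-(A-B\bar K)^\tp\bar P(A-B\bar K)=Q+\bar K^\tp R\bar K\succ 0$. But a discrete Lyapunov inequality $\bar P\succ 0$, $\bar P-M^\tp\bar P M\succ 0$ forces $\rho(M)<1$ (standard Lyapunov stability argument: if $M v=\lambda v$ with $|\lambda|=1$, then $v^*(\bar P-M^\tp\bar P M)v=(1-|\lambda|^2)v^*\bar P v=0$, a contradiction), contradicting $\rho(A-B\bar K)=1$.

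\textbf{Step 3 (gradient dominance).} This is the step I expect to be the main obstacle, since it is the only one requiring a genuinely new computation rather than standard Lyapunov manipulations. The strategy, following \cite{pmlr-v80-fazel18a}, is: (a) derive the closed form of the gradient, $\nabla J(K)=2E_K\Sigma_K$ where $E_K=(R+B^\tp P_KB)K-B^\tp P_KA$ and $\Sigma_K=\mathbb{E}\sum_{t=0}^\infty x_tx_t^\tp$ is the state covariance under policy $K$ (satisfying its own Lyapunov equation $(A-BK)\Sigma_K(A-BK)^\tp+\Sigma_0=\Sigma_K$); (b) establish the "cost difference" (advantage/value-difference) lemma, which expresses $J(K)-J(K^*)$ for the optimal gain $K^*$ in terms of $E_K$, $E_{K^*}$, and the Riccati/gap quantities, and in particular gives an upper bound of the form $J(K)-J(K^*)\le \frac{\|\Sigma_{K^*}\|}{\sigma_{\min}(R)}\tr(E_K^\tp E_K\,\text{(weighting)})$; (c) lower-bound $\|\nabla J(K)\|_F^2=\|2E_K\Sigma_K\|_F^2\ge 4\sigma_{\min}(\Sigma_K)^2\tr(E_K^\tp E_K)\ge 4\sigma_{\min}(\Sigma_0)^2\tr(E_K^\tp E_K)$, using $\Sigma_K\succeq\Sigma_0$. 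Combining (b) and (c) and tracking constants yields $J(K)-J(K^*)\le\frac{1}{2\mu}\|\nabla J(K)\|_F^2$ with $\mu=\frac{2\sigma_{\min}(\Sigma_0)^2\sigma_{\min}(R)}{\|\Sigma_{K^*}\|}$ as claimed. The delicate points are getting the advantage-function bookkeeping right (completing the square in the Bellman-type identity to isolate $E_K$) and ensuring that the matrix inequalities are applied in the correct order so that the smallest singular value of $R$ and of $\Sigma_0$ appear with the right powers. With all three parts in hand, Lemma \ref{lemma:coercive} together with Theorem \ref{theorem:main_technical_thm} immediately delivers the global linear convergence of gradient descent for LQR.
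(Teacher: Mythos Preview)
Your proposal is correct and follows essentially the same route as the paper: the vectorized Lyapunov formula for real analyticity, a contradiction argument for coerciveness (the paper simply cites \cite{bu2019lqr} for this, whereas you spell out the standard Lyapunov-limit argument), and the cost-difference lemma from \cite{pmlr-v80-fazel18a} combined with $\nabla J(K)=2E_K\Sigma_K$ and $\Sigma_K\succeq\Sigma_0$ for gradient dominance. Your constant tracking in Step~3 is consistent with the claimed $\mu$, and the only places to be careful are exactly the ones you flag (the completion-of-the-square bookkeeping in the advantage identity and the order of the singular-value bounds).
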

\begin{proof}
To prove Statement 1, notice that the analytical solution of the Lyapunov equation can be calculated as
$\vect(P_K)=\left(I-(A-BK)^\tp\otimes (A-BK)^\tp\right)^{-1}\vect(Q+K^\tp R K)$.
Hence $P_K$ is a rational function of the elements of $K$. Then
we know that $J$ is a rational function of the elements of $K$. Therefore, $J$ is real analytical and twicely continuously differentiable. 

To prove Statement 2, one can apply the contradiction argument in \cite{bu2019lqr}.  We refer readers to  \cite{bu2019lqr} for details of this argument.

Finally, Statement 3 can be proved using 
the cost difference lemma \cite[Lemma 10]{pmlr-v80-fazel18a}. See \cite[Lemma 11]{pmlr-v80-fazel18a} for one such argument\footnote{The constant coefficient used in \cite[Lemma 11]{pmlr-v80-fazel18a} can be slightly tightened to match the exact value of $\mu$ given in Lemma \ref{lemma:coercive}.}.
\end{proof}

It is worth mentioning that 
we can explicitly bound the smoothness constant $L$ over any sublevel set of $J(K)$ in terms of problem paramaters, which helps 
in establishing refined convergence rates 
for the gradient descent method. 
We are now ready to state  the global convergence result for the gradient descent method on LQR. 

\begin{theorem}\label{thm:LQR-PG}
Consider the LQR PO problem with $(Q,R)$ being positive definite, and apply the gradient method with the update rule $K^{n+1}=K^n-\alpha \nabla J(K^n)$. Suppose $K^0\in\cK$ is stabilizing, then with stepsize satisfying  $0<\alpha\leq 1/L_{K^0}$, where $L_{K^0}$ denotes the smoothness constant of $J(K)$ over the sublevel set of level $J(K^0)$, we have that: i) for all $n\geq 1$, $K^n$ stabilizes  the system, i.e., $\rho(A-BK^n)<1$; ii) the sequence $\{K^n\}$ converges to the global optimum of LQR at a linear rate as
$$
J(K^n)-J(K^*)\leq \Big(1-\mu\alpha\Big)^n\big(J(K^0)-J(K^*)\big),
$$
where $\mu$ is the gradient dominance coefficient given in Lemma \ref{lemma:coercive}. 
\end{theorem}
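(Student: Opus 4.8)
The plan is to derive Theorem~\ref{thm:LQR-PG} as an immediate corollary of Theorem~\ref{theorem:main_technical_thm}, using Lemma~\ref{lemma:coercive} to check its hypotheses. Lemma~\ref{lemma:coercive} asserts exactly that the LQR cost $J(K)$ is (i) real analytic, hence twice continuously differentiable on the open feasible set $\mathcal{K}=\{K:\rho(A-BK)<1\}$, (ii) coercive on $\mathcal{K}$, and (iii) $\mu$-gradient dominant of degree $2$ with the stated $\mu$. These are precisely the assumptions required to invoke all four statements of Theorem~\ref{theorem:main_technical_thm}. In particular, coerciveness together with compactness of sublevel sets (Statement~1) guarantees that a minimizer $K^*$ exists and, not lying on $\partial\mathcal{K}$, is an interior stationary point with $\nabla J(K^*)=0$, so the degree-$2$ gradient dominance inequality is meaningful.

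For claim (i), set $\gamma_0=J(K^0)$ and let $L_{K^0}$ be the smoothness constant of $J$ over the sublevel set $\mathcal{K}_{\gamma_0}$, which is finite by Statement~2 of Theorem~\ref{theorem:main_technical_thm} since $\mathcal{K}_{\gamma_0}$ is compact. The hypothesis $0<\alpha\le 1/L_{K^0}<2/L_{K^0}$ places $\alpha$ in the admissible range of Statement~3, which then directly yields $K^n\in\mathcal{K}$ for all $n$, i.e., $\rho(A-BK^n)<1$. I would simply cite this; there is nothing further to verify.

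For claim (ii), Statement~4 of Theorem~\ref{theorem:main_technical_thm} (applicable because of degree-$2$ gradient dominance) gives
\begin{equation*}
J(K^n)-J(K^*)\le \bigl(1-2\mu\alpha+\mu L_{K^0}\alpha^2\bigr)^n\bigl(J(K^0)-J(K^*)\bigr).
\end{equation*}
It remains to observe that the contraction factor can be relaxed to $1-\mu\alpha$ under the stated stepsize bound: since $\alpha\le 1/L_{K^0}$ we have $L_{K^0}\alpha^2\le\alpha$, hence $-2\mu\alpha+\mu L_{K^0}\alpha^2\le-\mu\alpha$, so $1-2\mu\alpha+\mu L_{K^0}\alpha^2\le 1-\mu\alpha$. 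Moreover $\mu\le L_{K^0}$, a standard consequence of the coexistence of $L$-smoothness and degree-$2$ gradient dominance, so $1-\mu\alpha\in[0,1)$ and the resulting bound is a genuine linear rate. Substituting yields the claimed inequality.

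The point worth emphasizing — rather than a genuine obstacle — is that all the real work has already been done upstream: the nonconvexity of $\mathcal{K}$ is handled entirely through coerciveness, which turns $J$ into its own barrier function and lets the iterates remain stabilizing without any projection, while the global linear rate comes from the \emph{nonlocal} gradient dominance property established in Lemma~\ref{lemma:coercive}. The only subtlety in the argument is that $L_{K^0}$ is merely the smoothness constant on $\mathcal{K}_{\gamma_0}$, not a global constant; this is legitimate precisely because the descent argument inside Statement~3 of Theorem~\ref{theorem:main_technical_thm} shows $J(K^{n+1})\le J(K^n)$, so the iterates never leave $\mathcal{K}_{\gamma_0}$ and the local smoothness constant always applies.
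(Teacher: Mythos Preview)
Your proposal is correct and matches the paper's approach exactly: the paper's proof is the single sentence ``The proof of Theorem~\ref{thm:LQR-PG} follows Theorem~\ref{theorem:main_technical_thm} and Lemma~\ref{lemma:coercive},'' and you have spelled out precisely how that deduction goes, including the step $1-2\mu\alpha+\mu L_{K^0}\alpha^2\le 1-\mu\alpha$ when $\alpha\le 1/L_{K^0}$ that reconciles the contraction factor in Statement~4 with the one claimed in Theorem~\ref{thm:LQR-PG}.
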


The proof of Theorem \ref{thm:LQR-PG} follows 
Theorem \ref{theorem:main_technical_thm} and Lemma \ref{lemma:coercive}. The above result requires an initial stabilizing controller $K^0\in\cK$. This is not an issue, since it is also known that one can obtain such stabilizing policies using PO methods \cite{lamperski2020computing,perdomo2021stabilizing,ozaslan2022computing,zhao2022sample}.

\vspace{5pt} 
\noindent\textbf{Zeroth-order optimization.} 
In many cases, the exact gradient $\nabla J(K)$ is not available, especially when the dynamical system model is unknown.  In the optimization and learning community, one method which has been actively studied is to estimate the gradient through the cost value, $J(K)$. For example, we can estimate the gradient using the following single-point zeroth-order gradient estimator:
\begin{equation}\label{eq:single_point}
\mathsf{G}_J(K;r,z)=\frac{d}{r} J(K+rz)\,z,
\qquad z\sim\mathcal{Z}.
\end{equation}
Here $r>0$ is a positive parameter called the \emph{smoothing radius}. We slightly abuse the notation  by letting $z$ denote the \emph{random perturbation}, which is a $d$-dimensional random vector following the probability distribution $\mathcal{Z}$. Usually, $\mathcal{Z}$ is chosen to be one of the following: i) the Gaussian distribution $\mathcal{N}(0,d^{-1}I)$.
ii) The uniform distribution on the unit sphere $\mathbb{S}_{d-1}\coloneqq \{z\in\mathbb{R}^d: \|z\|=1\}$, which we denote by $\operatorname{Unif}(\mathbb{S}_{d-1})$.  For this single-point estimator, it is shown that $
\mathbb{E}_{z\sim\mathcal{Z}}\!\left[\mathsf{G}_J(K;r,z)\right]
=\nabla J_r(K),
$ where $J_r$ is a smoothed version of $J$ 
and the radius $r$ controls the approximation accuracy. 
Besides the single point estimator given in Equation~\ref{eq:single_point},  multi-point gradient estimators can be used to improve the convergence rate \cite{nesterov2017random, duchi2014twopoint}. 
The optimization and learning literature, e.g., \cite{shamir2013complexity,ghadimi2013stochastic, nesterov2017random,balasubramanian2022zeroth,tang2020zeroth} have studied the properties/complexity of the zeroth-order methods under different settings, e.g., convex or nonconvex $J(\cdot)$, stochastic or deterministic optimization, etc. When using the zeroth-order methods for LQR, we need to pay extra attention to the following issues:
\begin{enumerate}[leftmargin=12pt]
    \item Initial feasible $K^0$:
    Same as in the exact gradient case, the initial $K^0$ should be feasible, i.e., stabilizing the system. If a system is unknown, indeed this is challenging.  
    Recent works  \cite{perdomo2021stabilizing,ozaslan2022computing,zhao2022sample} provide the convergence/complexity theory for using PO-based discount annealing methods
    to obtain initial stabilizing policies.  
    \item Impact of bias and variance of zeroth-order estimation on the feasibility and convergence of $K^n$: Though Theorem~\ref{thm:LQR-PG} ensures feasibility for gradient descent iterates, zeroth-order estimation introduces both bias and variance in the gradient evaluation.  
    To address this, one can tune the parameter $r$, and use the average of several single-point estimators. 
    \item Feasibility of the perturbed controller $K^n+rz$: 
    We need to ensure that the iterate $K^n+rz$ is in $\mathcal{K}$. This limits the choices of random exploration $rz$, 
    e.g., $K^n$ should be at least strictly feasible to allow perturbations.  
    \item Evaluating $J(K)$:  
    When the cost is defined on the infinite time horizon (e.g., in LQR), it is challenging to evaluate in practice; often one can obtain only a
    finite-time truncated estimate for $J(K)$. Handling the truncation requires care, as it also introduces bias and variance in the estimator. 
    \item Dependence of the sample complexity on system  parameters:  \cite{pmlr-v80-fazel18a,malik2019derivative} show that the sample complexity of LQR zeroth-order methods depends on the system parameters $A, B, Q, R$ and the initial controller $K^0$. If the system is ``ill-conditioned'', 
    the number of samples can potentially be quite large, as discussed in the recent work~\cite{ziemann2022policy}. See more general discussions on the interplay between statistical learning theory and control in the concurrent survey \cite{tsiamis2022statistical}. 
\end{enumerate}

Due to space limitations, we refer readers to  \cite{pmlr-v80-fazel18a,malik2019derivative,li2021distributed}  for details on how the above issues were handled when implementing zeroth-order methods for LQR.
There are other data-driven policy gradient estimation methods such as the policy gradient theorem~\cite{sutton2000policy,schulman2015high} or iterative feedback tuning~\cite{hjalmarsson1998iterative,hjalmarsson2002iterative}. Sample complexity for these methods are less understood. 
We end this section with a few remarks on other aspects of the LQR PO~theory.

\vspace{5pt}\noindent\textbf{LQR with stochastic noise.} The above convergence result extends to more general forms of LQR, e.g., with process noise as in Equation \ref{eq:LTI2}. The major change in the derivations is to replace the matrix $\mathbb{E}(x_0 x_0^\tp)$ with the covariance of the process noise. 

\vspace{5pt}\noindent\textbf{Natural policy gradient (NPG).}  NPG is a standard RL algorithm which enforces a KL divergence constraint on the updated and the old policies \cite{kakade2002natural}. In the LQR setting, the deterministic counterpart of NPG is given as follows
\begin{align}\label{eq:NPG_LQR}
K^{n+1}=K^n-\alpha \nabla J(K^n)\Sigma_{K^n}^{-1},
\end{align} 
where $\Sigma_{K^n}$ is the state correlation matrix for $K^n$ (see \cite{pmlr-v80-fazel18a} for details).
For a discounted LQR problem with a stochastic Gaussian policy, the NPG update (which inverts the Fisher information matrix) exactly reduces to the above iterative scheme. 
The NPG method in Equation \ref{eq:NPG_LQR} also converges at a linear rate. If we denote $F^n=\nabla J(K^n)\Sigma_{K^n}^{-1}$, then we have
$J(K^n)-J(K^*)\le \frac{\norm{\Sigma_{K^*}}}{\sigma_{\min}(R)}\tr((F^n)^\tp F^n)$,
which can be combined with the cost difference lemma (see Lemma 10 in \cite{pmlr-v80-fazel18a}) to show the linear convergence of NPG.

\vspace{5pt}\noindent\textbf{Policy iteration \cite{bradtke1994adaptive} and Kleinman's algorithm~\cite{kleinman1968iterative}.} An important variant of the gradient descent method is the Gauss-Newton method with the following  iterates 
\begin{align}\label{equ:gauss_newton_update}
K^{n+1}=K^n -\alpha (B^\tp P_{K^n} B+R)^{-1} \nabla J(K^n) \Sigma_{K^n}^{-1},
\end{align}
which is equivalent to
$K^{n+1}=K^n-2\alpha\left(K^n-(B^\tp P_{K^n} B+R)^{-1} B^\tp P_{K^n} A\right)$.
If $\alpha=\frac{1}{2}$, this algorithm reduces to the policy iteration (PI) algorithm in the RL literature, or equivalently Kleinman's algorithm in the controls literature \cite{kleinman1968iterative,hewer1971iterative}. We can show that the Gauss-Newton method has a global linear convergence rate for any $\alpha\le \frac{1}{2}$. In addition, when $\alpha=\frac{1}{2}$, the above method has a superlinear local rate. This explains why PI is typically fast on the LQR problem. PI can be implemented in a model-free manner via least-squares techniques \cite{lagoudakis2003least}. There are also sample complexity results for approximate PI on LQR \cite{krauth2019finite}. 

\vspace{5pt}\noindent\textbf{Further extensions.}
  The above LQR PO theory can also be extended to cover: i) time-varying/nonlinear systems such as Markovian jump linear systems~\cite{jansch2020convergence,jansch2020policy,rathod2021global,jansch2022policy} and Lur'e systems \cite{qu2021exploiting}, ii) more complicated RL algorithms, e.g. 
actor-critic \cite{yang2019provably,jin2020analysis}, and iii) different settings including the continuous-time setting \cite{mohammadi2019global,mohammadi2020linear,mohammadi2021convergence,bu2020policy},  the multiplicative noise setting \cite{gravell2020learning}, and the finite-horizon setting \cite{hambly2021policy}.

\subsection{Technical Challenges for Settings  Beyond LQR} \label{beyond-lqr}

The global convergence of PO for LQR heavily relies on several important properties of the cost function and feasible set. 
We summarize the importance of these properties as follows.

\vspace{5pt}\noindent\textbf{Coerciveness of cost.} A key factor to the results for LQR is the coerciveness of the objective, as stated in Lemma  \ref{lemma:coercive}. Coerciveness ensures that as long as the cost function value decreases, the controller $K^n$ remains stabilizing, i.e., feasible.  Furthermore, coerciveness  ensures that the sublevel sets of the cost function are {\it compact}, which, together with the real analytical property of the cost, implies that the gradient of the cost is {globally Lipschitz} over any finite level set, i.e., the cost is globally smooth over its sublevel sets. This smoothness property serves as one of the pillars in nonconvex optimization  analysis, in determining the stepsize that sufficiently decreases the cost, see e.g., \cite{bertsekas1997nonlinear}. The coercive property makes  the cost function a valid barrier function that explicitly {\it regularizes} the iterates to be feasible along the iterations. However, the cost is not necessarily coercive for other control problems, and only decreasing the cost value thus  may 
no longer ensure the feasibility of the iterates.

\vspace{5pt}\noindent\textbf{Gradient dominance (PL property) of cost.} Convergence to the global minimum of policy gradient methods for LQR, especially with {\it linear} convergence rate, relies heavily on the benign landscape property of gradient dominance (cf. Lemma \ref{lemma:coercive}). 
Together with the smoothness of the objective, the gradient dominance property (of degree $2$, see 
Definition~\ref{def_coer})  naturally leads to global convergence rate that can be linear \cite{karimi2016linear}. This benign property is a blessing for  certain control problems (see  \S\ref{sec:convex_para_grad_dom}), and does not necessarily hold in general. 

\vspace{5pt}\noindent\textbf{Smoothness of cost.} Sometimes the cost may not be differentiable over the entire feasible~set. For example, for optimal $\mathcal{H}_\infty$ control, the cost function can be non-differentiable at stationary points \cite{apkarian2006nonsmooth,burke2020gradient}. The lack of smoothness causes difficulty for such PO problems.   

\vspace{5pt}\noindent\textbf{Connectivity of feasible set.} Another key to the success of PO for LQR, as a {\it local search} approach, is that the feasible set, though nonconvex in general, is connected. This is important since local search algorithms typically cannot jump between connected components, and a single connected component has to include the global optimum. Unfortunately,  such connectivity is lost when extending PO to partially observable control systems, imposing additional challenges in establishing global convergence.

Next, we study several  control problems where some (if not all) of these desired properties are lacking, and more careful and advanced analyses are needed in order to obtain global convergence guarantees for PO methods.

\section{Case II: PO for Risk-Sensitive \& Robust  Control}\label{sec:advance_cases}

In this section, we review the global convergence results of PO methods on the mixed $\mathcal{H}_2/\mathcal{H}_\infty$ control problem and the state-feedback $\mathcal{H}_\infty$  optimal control problem. For the mixed design problem, the main issue is the lack of coerciveness, i.e., the cost function close to the boundary of the feasible set may not approach infinity. For the  $\mathcal{H}_\infty$ synthesis problem, the main difficulty is the lack of smoothness, i.e., the cost function may be non-differentiable over some important points in the feasible set. 
We will discuss how to modify PO algorithms to mitigate these issues and achieve global convergence provably. 
It is worth emphasizing that the idea of applying RL methods to solve $\mathcal{H}_\infty$ control is not new \cite{wu2012neural,luo2014off,kiumarsi2017h}. 
 This section mainly focuses on the recently-developed global convergence theory for PO methods on such robust control tasks \cite{ zhang2021policy,zhang2020stability,Guo2022hinf}.

\subsection{PO for Mixed $\mathcal{H}_2/\mathcal{H}_\infty$ Design: Implicit Regularization}\label{sec:robust_control_results} 

Recall the formulation of linear risk-sensitive and mixed design problems in {\bf Case II} in \S\ref{sec:formulation}. For simplicity, we use one common objective of the problem, and restate it as follows 
\#\label{equ:def_mixed_formulation}
&\qquad\qquad\qquad\min_{K}~\qquad~~J(K):=\tr(P_K DD^\tp)\\
\quad & s.t.~~~~\qquad\qquad\qquad\qquad K\in \cK \text{~~in~~Equation \ref{equ:feasible_K}} \text{~~and~~}\notag\\
&(A-BK)^\tp(P_K+P_KD(\gamma^2I-D^\tp P_K D)^{-1}D^\tp P_K)  (A-BK)+C^\tp C+K^\tp RK-P_K=0.\notag
\#
The above cost function $J(K)$ is known to be differentiable over the feasible set. One may wonder whether the analysis for the LQR case can be tailored to establish the global convergence of the gradient descent method on the above mixed $\mathcal{H}_2/\mathcal{H}_\infty$ design problem.
However, the following lemma reveals the less desired landscape properties of the cost~function.

\begin{lemma}[Nonconvexity and No Coerciveness]\label{thm:ncnc_property}
 	The feasible set for the mixed $\cH_{2}/\cH_{\infty}$ design  problem in Equation \ref{equ:def_mixed_formulation}   is nonconvex. Moreover, the cost function given in Equation~\ref{equ:def_mixed_formulation}  is not coercive. Particularly, as $K\to \partial \cK$, where $\partial \cK$ is the boundary of the constraint set $\cK$, the cost $J(K)$ does not necessarily  approach  infinity.
 \end{lemma}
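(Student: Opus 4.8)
The plan is to prove each assertion by an explicit low-dimensional instance of the mixed design problem, since both are negative statements for which one well-chosen example suffices; throughout I would fix data $(A,B,C,E,D)$ with $E^\tp[C\,\,E]=[0\,\,R]$, $R\succ0$, and a robustness level $\gamma$ making $\mathcal{K}$ in Equation~\ref{equ:feasible_K} nonempty. For the nonconvexity of $\mathcal{K}$, I would exhibit $K_1,K_2\in\mathcal{K}$ whose midpoint $\bar K=\frac12(K_1+K_2)$ violates one of the two defining conditions of $\mathcal{K}$. The most economical route invokes the classical nonconvexity of the Schur-stabilizing state-feedback set: already for $n=2$ with $B$ square and invertible there exist $K_1,K_2$ with $\rho(A-BK_i)<1$ but $\rho(A-B\bar K)\ge1$ (choose $A-BK_1,A-BK_2$ to be triangular Schur matrices whose average has an eigenvalue outside the unit disk, cf.\ \cite{pmlr-v80-fazel18a}); appending a small output pair $(C,E)$ honoring $E^\tp[C\,\,E]=[0\,\,R]$ and any $D$, and taking $\gamma$ large, makes $K_1,K_2$ meet the $\mathcal{H}_\infty$ bound while $\bar K$ is not even stabilizing, so $\bar K\notin\mathcal{K}$. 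A variant that exercises the robustness constraint itself keeps $\bar K$ stabilizing but arranges $\|(C-E\bar K)(zI-A+B\bar K)D\|_\infty\ge\gamma>\|(C-EK_i)(zI-A+BK_i)D\|_\infty$, which only needs evaluating three closed-loop transfer-function norms; either way $\mathcal{K}$ is not convex.

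For the lack of coerciveness, the guiding observation is that $\partial\mathcal{K}$ splits into a stability part $\{K:\rho(A-BK)=1\}$ and a robustness part $\{K:\rho(A-BK)<1,\ \|(C-EK)(zI-A+BK)D\|_\infty=\gamma\}$, and $J(K)=\tr(P_KDD^\tp)$ need not explode on the latter. I would take a short segment $K(t)=(1-t)K_0+tK_1$ with $K_0$ strictly feasible, $K_1$ stabilizing but with closed-loop $\mathcal{H}_\infty$ norm above $\gamma$, and $t^\star=\inf\{t:\|(C-EK(t))(zI-A+BK(t))D\|_\infty\ge\gamma\}$; keeping $K_0,K_1$ close enough that $A-BK(t)$ is Schur on all of $[0,t^\star]$, one obtains $K(t)\in\mathcal{K}$ for $t<t^\star$, $\rho(A-BK(t))$ bounded away from $1$ on $[0,t^\star]$ by continuity, and $K(t)\to K(t^\star)\in\partial\mathcal{K}$ with the closed-loop $\mathcal{H}_\infty$ norm equal to $\gamma$. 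It then remains to show that the stabilizing solution $P_{K(t)}$ of the Riccati equation in Equation~\ref{equ:def_mixed_formulation} tends to a finite PSD limit as $t\uparrow t^\star$: the $\mathcal{H}_\infty$ bound becoming active corresponds to the worst-case (Riccati-completed) closed loop reaching the unit circle, not to $P_{K(t)}$ diverging --- a phenomenon already transparent for a scalar disturbance channel, where the relevant Riccati root stays bounded while its closed-loop eigenvalue tends to $1$. Granting this, $J(K(t))=\tr(P_{K(t)}DD^\tp)$ stays bounded although $K(t)\to\partial\mathcal{K}$, so $J$ fails to be coercive.

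The main obstacle is precisely this last point --- boundedness of $P_{K(t)}$ as $K(t)$ approaches the robustness boundary. A fully general $n$-dimensional argument requires a continuity/monotonicity analysis of the stabilizing Riccati solution over its existence region (where $\gamma^2I-D^\tp P_KD\succ0$ and the completed loop is Schur), together with the identification of the robustness boundary as the locus where that loop's spectral radius hits $1$ while $P_K$ remains finite. The pragmatic alternative, and the one I would actually carry out for the lemma, is to keep the instance small --- a $2\times2$ block, or a scalar disturbance channel embedded into a minimal realization respecting $E^\tp[C\,\,E]=[0\,\,R]$ --- so that $\rho(A-BK(t))$, the closed-loop $\mathcal{H}_\infty$ norm, and $P_{K(t)}$ all admit closed forms, reducing both claims to elementary inequalities. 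By contrast, once such an example is in hand the nonconvexity claim is entirely routine.
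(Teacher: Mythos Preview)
The paper does not actually prove this lemma in the text; it merely illustrates the landscape difference via Figure~\ref{fig:illust_hardness} and defers the argument to \cite{zhang2021policy}. Your plan---establishing both negative claims by explicit low-dimensional instances---is precisely the natural route and matches what the cited reference does. In particular, your nonconvexity argument (importing the classical nonconvexity of the stabilizing set and taking $\gamma$ large so that the $\mathcal{H}_\infty$ constraint is slack at $K_1,K_2$) is correct and standard.

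For non-coerciveness, you have correctly isolated the mechanism: the boundary $\partial\mathcal{K}$ has a ``robustness'' piece where $\rho(A-BK)<1$ but the closed-loop $\mathcal{H}_\infty$ norm equals $\gamma$, and along a path approaching that piece the stabilizing Riccati solution $P_K$ stays finite---what degenerates is the worst-case closed-loop spectral radius, which tends to $1$, not $P_K$ itself. This is exactly the phenomenon exploited in \cite{zhang2021policy}, and your scalar heuristic is the right sanity check (e.g., in a one-dimensional disturbance channel the stabilizing Riccati root tends to a finite limit while the associated closed-loop pole hits the unit circle). Your honest flag that a general $n$-dimensional proof of this boundedness requires the continuity/monotonicity theory of bounded-real Riccati equations is accurate; carrying it out on a concrete $2\times2$ instance, as you propose, is entirely sufficient for the lemma and is how the referenced paper proceeds. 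One small caveat: make sure the instance you pick has $\gamma$ strictly above the optimal $\mathcal{H}_\infty$ norm but below the norm at some stabilizing $K_1$, so that the robustness boundary is genuinely nonempty and reachable along a stabilizing segment---otherwise $\partial\mathcal{K}$ collapses to the stability boundary and the argument does not apply.
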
 
 
 \begin{figure}[!t]
	\centering
	\begin{tabular}{cc}
		\hskip-115pt
		\includegraphics[width=0.28\textwidth]{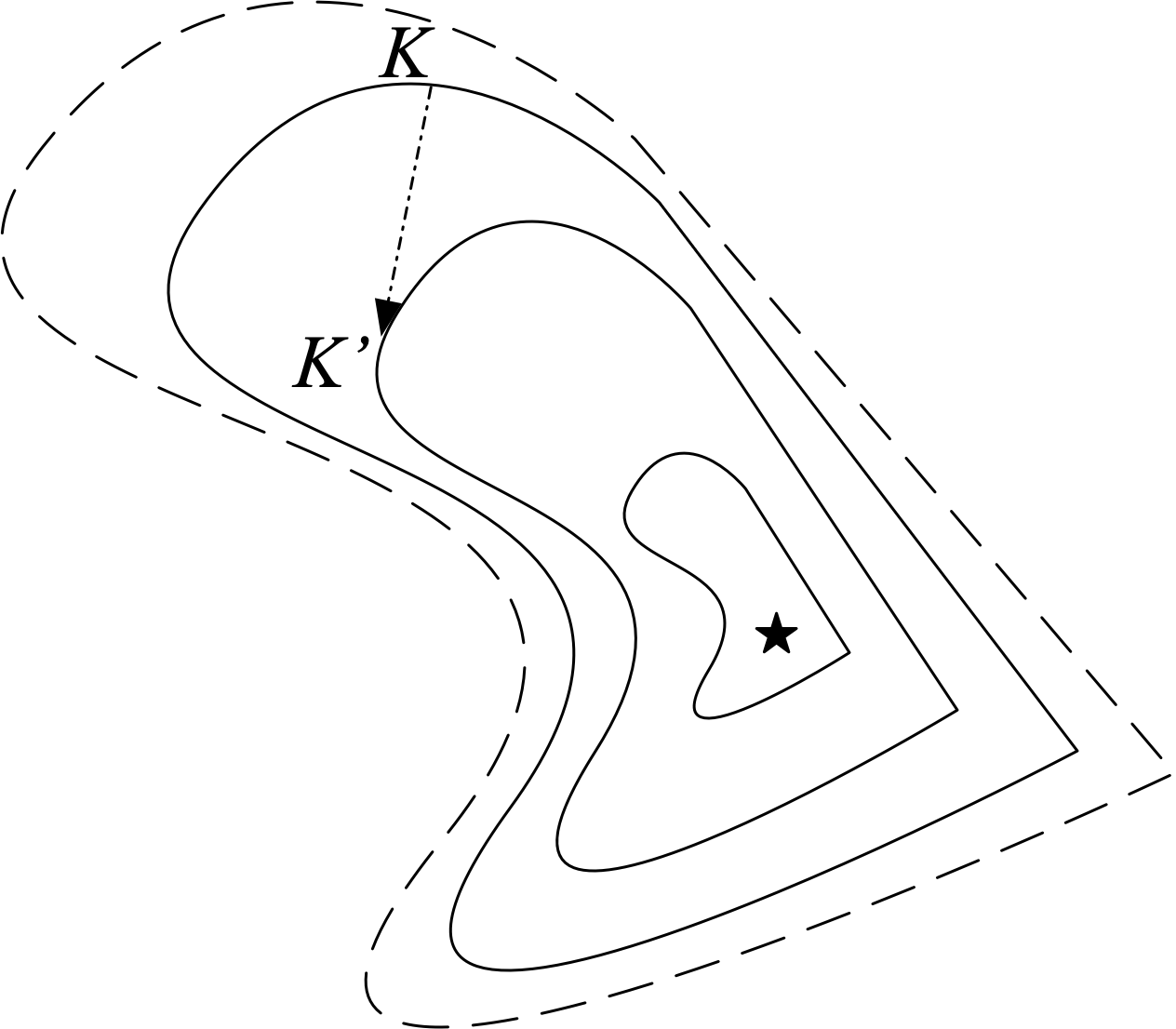}
		&
		\hskip-190pt
		\includegraphics[width=0.3\textwidth]{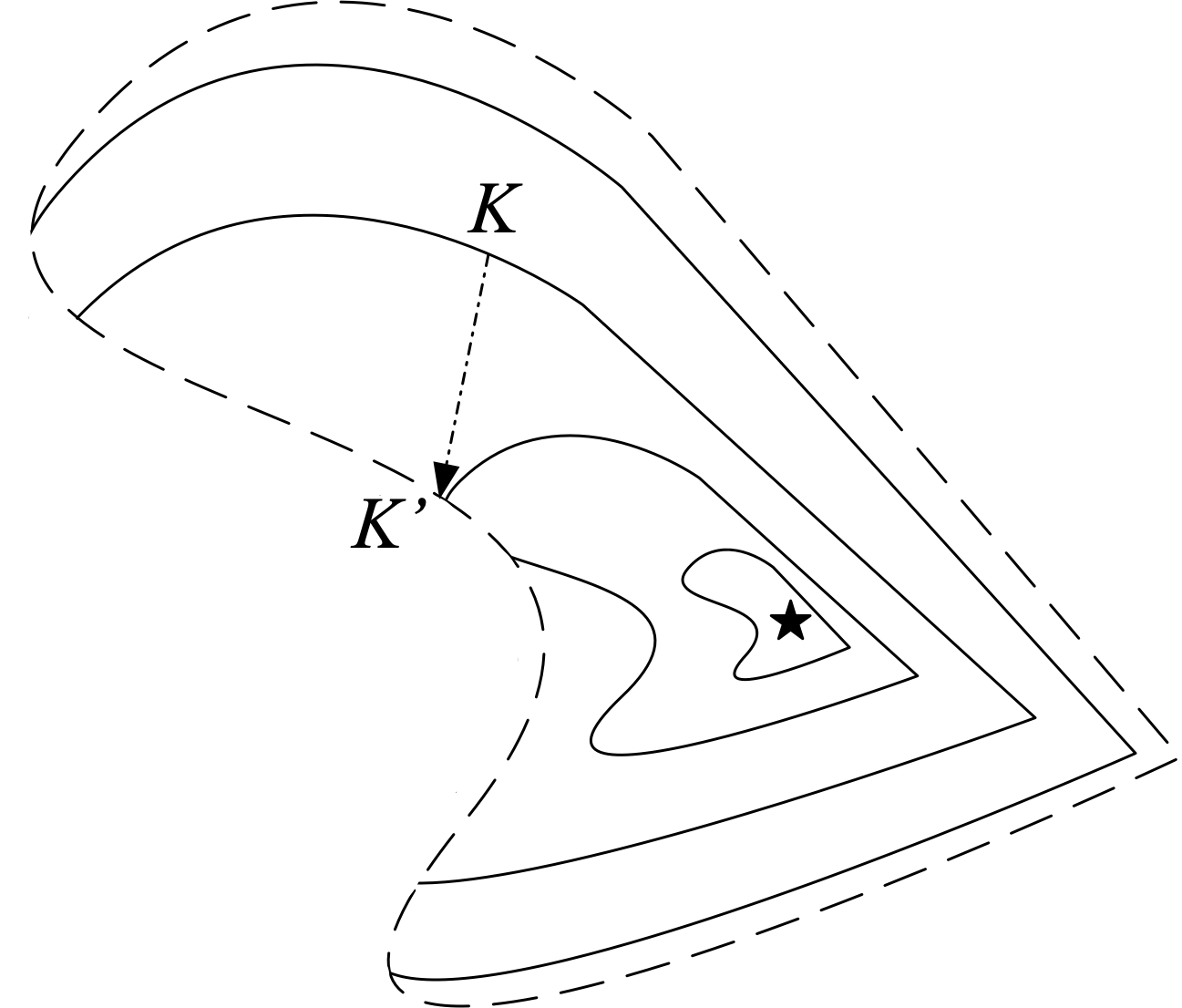}
		\\
		\hskip-115pt(a) Landscape of LQR & \hskip -190pt(b) Landscape of Mixed  $\cH_2/\cH_\infty$  Control
	\end{tabular}
	\vspace{7pt}
	\caption{Comparison of the landscapes of LQR and mixed $\cH_2/\cH_\infty$ control design. The dashed lines represent the boundaries of the constraint sets $\mathcal{K}$. For (a) LQR, $\mathcal{K}$ is the set of all linear stabilizing state-feedback controllers; for (b) mixed $\cH_2/\cH_\infty$ control,  $\mathcal{K}$ is set of all linear stabilizing state-feedback controllers satisfying an extra $\cH_\infty$ constraint. The solid lines are the contour lines of the cost $J(K)$. $K$ and $K'$ denote the control gains of two consecutive iterates; {\scriptsize $\bigstar$} denotes the global optimizer.}
	\label{fig:illust_hardness}
\end{figure}

 The difference between the landscapes of LQR and mixed $\cH_2/\cH_\infty$ control is illustrated in Figure \ref{fig:illust_hardness}.  The cost function for mixed $\cH_2/\cH_\infty$ control is not necessarily coercive and hence cannot serve as a barrier function over the feasible set by itself; see further discussion in
\cite{zhang2021policy}. The lack of coerciveness for the mixed design problem calls for a more careful analysis on maintaining the feasibility of the iterates during optimization. In \cite{zhang2021policy}, the concept of {\it implicit regularization} has been adopted to address this issue. Specifically, a PO algorithm is referred to as being  \emph{implicitly regularized} if  the iterates $\{K_n\}$ generated by the algorithm remain in $\cK$ without using projection. 
The implicit regularization property has been investigated in 
nonconvex optimization and machine learning, including training  neural networks \cite{kubo2019implicit}, phase retrieval \cite{ma2017implicit}, and matrix completion \cite{chen2015fast,zheng2016convergence}. 
We emphasize that implicit regularization is a feature of both the \emph{problem} and the \emph{algorithm}. Next, we discuss two PO algorithms which are guaranteed to stay in the feasible set and achieve global convergence, thanks to the implicit regularization property.

\subsubsection{Algorithms}\label{subsec:alg_H2_Hinf}
For ease of exposition, we introduce the following notation
\small
\#\label{equ:def_mu_Ek}
&\tP_K:=P_K+P_KD(\gamma^2I-D^\tp P_K D)^{-1}D^\tp P_K,\quad E_K:=(R+B^\tp \tP_{K} B)K-B^\tp \tP_{K} A\\ 
&\Delta_K:=\sum_{t=0}^\infty \big[(I-\gamma^{-2} P_KDD^\tp)^{-\tp}(A-BK)\big]^tD(I-\gamma^{-2}D^\tp P_K D)^{-1}D^\tp\\ 
&\qquad\qquad\big[(A-BK)^\tp(I-\gamma^{-2} P_KDD^\tp)^{-1}\big]^t.\notag
\#
\normalsize 
Then we have the explicit gradient formula $\nabla J(K)=2((R+B^\tp\tP_K B)K-B^\tp \tP_K A)\Delta_K$, and can show the following two PO methods enjoy the implicit regularization property \cite{zhang2021policy}.  
\begin{flalign}
 {\rm \textbf{Natural Policy Gradient:}}~~~\qquad K^{n+1}&=K^n-\eta \nabla J(K^n)\Delta_{K^n}^{-1}=K^n-2\eta E_{K^n} &\label{eq:exact_npg}\\
 {\rm \textbf{Gauss-Newton:}}\qquad\qquad\quad\qquad\quad K^{n+1}&=K^n-\eta (R+B^\tp \tP_{K^n} B)^{-1}\nabla J(K^n)\Delta_{K^n}^{-1}\notag\\
 \qquad\qquad\qquad\qquad\qquad\qquad\qquad\quad &=K^n-2\eta (R+B^\tp \tP_{K^n} B)^{-1}E_{K^n},
 &\label{eq:exact_gn}
\end{flalign}
where $\eta>0$ is the stepsize. The updates resemble  the policy optimization updates for LQR as discussed in \S\ref{sec:LQR_results},  but with $P_K$ therein replaced by $\tP_K$. The natural PG update is related to the gradient over a Riemannian  manifold; 
while the Gauss-Newton update can be viewed as a special case of the quasi-Newton update.

\subsubsection{Global Convergence Guarantees}\label{subsec:H2_Hinf_theory}

The natural PG and Gauss-Newton updates in Equations  \ref{eq:exact_npg}-\ref{eq:exact_gn} enjoy the implicit regularization property formalized as below.

\begin{theorem}[Implicit Regularization]\label{thm:stability_update}
For any iterate $K=K^n\in\mathcal{K}$,
i.e., $\rho(A-BK)< 1$ and $\norm{(C-EK)(zI-A+BK)D}_\infty<\gamma$, 
	suppose that  the stepsize  $\eta$ satisfies
	\begin{itemize}
	\item Natural policy gradient in Equation \ref{eq:exact_npg}: $\eta\leq {1}/{(2\|R+B^\tp \tP_K B\|)}$,
	\item Gauss-Newton in Equation \ref{eq:exact_gn}: $\eta\leq {1}/{2}$.  
	\end{itemize}	 
	 Then the next iterate $K'=K^{n+1}$ obtained from Equations  \ref{eq:exact_npg}-\ref{eq:exact_gn} also lies in $\cK$.  
\end{theorem}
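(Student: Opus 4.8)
The plan is to show that along each of the two updates, the cost value does not increase, and moreover that the $\mathcal{H}_\infty$-type constraint degrades in a controlled way — so one cannot "fall off" the boundary of $\mathcal{K}$ in a single step. The key observation is that the feasible set $\mathcal{K}$ in Equation~\ref{equ:feasible_K} can be characterized via the existence of a positive semidefinite solution $P_K$ to the Riccati-type equation in Equation~\ref{equ:def_mixed_formulation} together with the invertibility condition $\gamma^2 I - D^\tp P_K D \succ 0$ (equivalently $W^{-1} - \beta P_K \succ 0$ in the LEQG phrasing); this is precisely the statement that the bounded-real-lemma inequality admits a stabilizing solution at level $\gamma$. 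So the strategy is: (i) reformulate membership $K' \in \mathcal{K}$ as the existence of $\widetilde P \succeq 0$ satisfying a strict Riccati \emph{inequality} at level $\gamma$ with $\rho$ of the associated closed-loop matrix less than one; (ii) exhibit such a certificate explicitly, taking $\widetilde P = \tP_K$ (the dissipativity certificate already associated with the \emph{current} feasible $K$), and verify the inequality holds for $K' = K - 2\eta E_K$ (natural PG) or $K' = K - 2\eta (R+B^\tp \tP_K B)^{-1} E_K$ (Gauss–Newton) under the stated stepsize bounds.

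Concretely, I would first derive the "one-step difference" identity: plugging $K'$ into the left-hand side of the Riccati operator evaluated at the fixed matrix $\tP_K$, and using $E_K = (R + B^\tp \tP_K B)K - B^\tp \tP_K A$, one completes the square to get an expression of the form
\begin{align*}
\big(\text{Riccati operator at }\tP_K\text{ with gain }K'\big) = -\tP_K + (\text{PSD terms}) + (K'-K)^\tp (R+B^\tp\tP_K B)(K'-K) - 2\,\mathrm{sym}\big((K'-K)^\tp E_K\big).
\end{align*}
Substituting the update rule for $K'-K$ turns the last two terms into $-(4\eta - 8\eta^2\|R+B^\tp\tP_K B\|)\,\mathrm{sym}(\cdot)$-type quantities (for natural PG), which is $\preceq 0$ exactly when $\eta \le 1/(2\|R+B^\tp\tP_K B\|)$; for Gauss–Newton the middle matrix cancels cleanly and one needs $\eta \le 1/2$. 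This shows $\tP_K$ is a \emph{supersolution} of the Riccati inequality for $K'$ at level $\gamma$. A standard monotonicity/comparison argument for Riccati recursions (the "if a PSD supersolution exists and the level condition $\gamma^2 I - D^\tp \tP_K D \succ 0$ holds, then the true stabilizing solution $P_{K'}$ exists, is PSD, and satisfies $P_{K'} \preceq \tP_K$") then yields $K' \in \mathcal{K}$ together with $J(K') = \tr(P_{K'}DD^\tp) \le \tr(\tP_K DD^\tp)$.

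The step I expect to be the main obstacle is the passage from "a PSD supersolution of the Riccati inequality exists" to "$K'$ is actually in $\mathcal{K}$" — that is, establishing both $\rho(A - BK') < 1$ and the strict $\mathcal{H}_\infty$ bound $\|(C-EK')(zI-A+BK')D\|_\infty < \gamma$ simultaneously. The subtlety is that the natural comparison lemma for discrete-time Riccati equations with the indefinite $(\gamma^2 I - D^\tp P D)^{-1}$ term requires the level condition $\gamma^2 I - D^\tp \tP_K D \succ 0$ to propagate, and one must rule out the boundary case where the closed-loop spectral radius touches $1$ or the $\mathcal{H}_\infty$ norm exactly equals $\gamma$. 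Here the strictness of the original feasibility of $K$ (which gives a genuinely strict inequality with slack) combined with a continuity/perturbation argument along the segment $\eta \mapsto K - 2\eta E_K$ closes the gap: at $\eta = 0$ we are at the feasible $K$, the relevant quantities vary continuously, and the monotone decrease of the certificate prevents the constraint from ever becoming active. I would assume the discrete-time bounded-real-lemma and the Riccati comparison lemma as known tools (they underlie the formulations in \S\ref{sec:formulation}), and cite \cite{zhang2021policy} for the detailed Riccati algebra.
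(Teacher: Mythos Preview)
Your proposal is essentially correct and matches the approach the paper points to (it cites \cite{zhang2021policy} and the Bounded Real Lemma): use the Riccati solution attached to the current feasible $K$ as a BRL certificate for $K'$, complete the square to reduce the check to a sign condition on a quadratic in $K'-K$, and invoke the stepsize bound to make that quadratic negative semidefinite; then a Riccati comparison plus continuity in $\eta$ gives strict feasibility of $K'$. One small correction: the certificate you plug into the BRL/Riccati \emph{inequality} for $K'$ should be $P_K$ itself, not $\tP_K$---the Riccati equality for $K$ reads $(A-BK)^\tp\tP_K(A-BK)+Q+K^\tp RK=P_K$, so subtracting it from the analogous expression at $K'$ yields exactly $(K'-K)^\tp(R+B^\tp\tP_K B)(K'-K)+2\,\mathrm{sym}\big((K'-K)^\tp E_K\big)$ (note the plus sign), and it is $P_K$ that then satisfies the strict BRL inequality at level $\gamma$ for the closed loop with $K'$.
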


The proof of Theorem \ref{thm:stability_update} can be found in \cite[Section 5]{zhang2021policy}, which has a deep connection with Bounded Real Lemma \cite[Chapter $1$]{basar95}, \cite{zhou1996robust,rantzer1996kalman}.  
Theorem \ref{thm:stability_update} shows that the robustness of the controller is preserved when certain policy search directions are used. 
Intuitively, the natural PG and Gauss-Newton methods somehow exploit the information of $P_K$ to avoid the directions that may lead to infeasibility.
This implicit regularization property is due to a combination of a certain nonconvex objective (mixed $\cH_2/\cH_\infty$ control) and certain algorithms (natural PG and Gauss-Newton). 
With this property in hand, we are ready to state the global convergence result.
\begin{theorem}[Global Convergence \& Local Faster Rates]\label{theorem:global_exact_conv} Suppose that $K^0\in\cK$ 
 and $\|K^0\|<\infty$.  Then, under the 
stepsize choices\footnote{For natural PG, it suffices to require the stepsize $\eta\leq{1}/{(2\|R+B^\tp \tP_{K_0} B\|)}$ for the initial $K^0$.}  as in Theorem \ref{thm:stability_update},  updates in Equation~\ref{eq:exact_npg} and  Equation \ref{eq:exact_gn} both converge to the global optimum $K^*=(R+B^\tp \tP_{K^*} B)^{-1}B^\tp \tP_{K^*} A$,  
in the sense that 
$\sum_{n=1}^N \|E_{K^n}\|_F^2/N=O(1/N)$. Moreover, if $DD^\tp>0$, then  under the same 
stepsize choices,  
 both updates  converge to the optimal 
 $K^*$ with a \emph{locally linear} rate, i.e., the objective $\{J(K^n)\}$  converges to $J(K^*)$ with a linear rate. In addition, if $\eta=1/2$,  the Gauss-Newton update in Equation  \ref{eq:exact_gn} converges to   $K^*$ with  a locally \emph{Q-quadratic} rate. 
\end{theorem}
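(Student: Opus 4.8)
The plan is to transplant the LQR convergence machinery of \S\ref{sec:LQR_results} — cost-difference lemma, gradient dominance, sufficient decrease — to the mixed objective by replacing $P_K$ with $\tP_K$ throughout, while using the implicit regularization guarantee of Theorem~\ref{thm:stability_update} in place of the barrier/coerciveness argument that is unavailable here (Lemma~\ref{thm:ncnc_property}). The first and technically heaviest step is a \emph{cost-difference lemma} generalizing \cite[Lemma~10]{pmlr-v80-fazel18a}: for any $K,K'\in\cK$, expanding $J(K')-J(K)=\tr\big((P_{K'}-P_K)DD^\tp\big)$ and repeatedly substituting the two Riccati identities these matrices satisfy, one should arrive at the exact identity
\[
J(K')-J(K)=2\,\tr\!\big(\Delta_{K'}(K'-K)^\tp E_K\big)+\tr\!\big(\Delta_{K'}(K'-K)^\tp(R+B^\tp\tP_K B)(K'-K)\big).
\]
Carrying along the extra $D(\gamma^2I-D^\tp P_KD)^{-1}D^\tp$ block — and verifying that all indicated inverses stay well-defined on the relevant trajectories, so that $\Delta_{K'}\succeq0$ — is where the Bounded Real Lemma structure behind Theorem~\ref{thm:stability_update} (cf.\ \cite{zhou1996robust,rantzer1996kalman}) enters; I expect this to be the main obstacle.

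Given the identity, two consequences follow. First, setting $K'=K^*$ and completing the square in $(K'-K)$ gives the gradient-dominance bound $J(K)-J(K^*)\le \tr\!\big(\Delta_{K^*}E_K^\tp(R+B^\tp\tP_KB)^{-1}E_K\big)\le \tfrac{\|\Delta_{K^*}\|}{\sigma_{\min}(R)}\|E_K\|_F^2$ for all $K\in\cK$ (using $(R+B^\tp\tP_KB)^{-1}\preceq R^{-1}$); in particular $E_K=0$ forces $J(K)=J(K^*)$, and one checks separately that $E_K=0$ makes $P_K$ the stabilizing solution of the Riccati equation in Equation~\ref{equ:def_mixed_formulation}, which is unique, so $E_K=0\iff K=K^*=(R+B^\tp\tP_{K^*}B)^{-1}B^\tp\tP_{K^*}A$. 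Second, setting $K=K^n$, $K'=K^{n+1}$, substituting the natural-PG direction $K^{n+1}-K^n=-2\eta E_{K^n}$ (resp.\ the Gauss-Newton direction $-2\eta(R+B^\tp\tP_{K^n}B)^{-1}E_{K^n}$) and invoking the stepsize bounds of Theorem~\ref{thm:stability_update} yields sufficient decrease $J(K^{n+1})\le J(K^n)-2\eta\,\tr\!\big(\Delta_{K^{n+1}}E_{K^n}^\tp E_{K^n}\big)$ (resp.\ with an extra $(R+B^\tp\tP_{K^n}B)^{-1}$ weight), where $\Delta_{K^{n+1}}\succeq0$ because Theorem~\ref{thm:stability_update} keeps $K^{n+1}\in\cK$. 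Telescoping from $n=1$ to $N$ and using $J(K^n)\ge J(K^*)$ gives $\tfrac1N\sum_{n=1}^N\tr\!\big(\Delta_{K^{n+1}}E_{K^n}^\tp E_{K^n}\big)=O(1/N)$, which converts to $\tfrac1N\sum_{n=1}^N\|E_{K^n}\|_F^2=O(1/N)$ once $\Delta_K$ is uniformly positive definite — as it is when $DD^\tp\succ0$, since then $\Delta_K\succeq D(I-\gamma^{-2}D^\tp P_KD)^{-1}D^\tp\succeq DD^\tp$ on all of $\cK$ (and under the standing nondegeneracy assumptions otherwise). Together with the stationarity characterization, this is the claimed convergence to the global optimum.

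For the locally linear rate, take $DD^\tp\succ0$, so $\Delta_K\succeq DD^\tp$ uniformly. The only missing ingredient is that the \emph{upper} bounds on $\tP_K$ and $\Delta_K$ are not uniform over $\cK$ — they blow up as $\|(C-EK)(zI-A+BK)D\|_\infty\to\gamma$, consistent with the lack of coerciveness in Lemma~\ref{thm:ncnc_property} — so the sufficient-decrease constant degrades near $\partial\cK$. However, monotone decrease of $\{J(K^n)\}$ together with the $O(1/N)$ bound above forces the iterates into a neighborhood of $K^*$ that is bounded away from $\partial\cK$; there $\tP_K$, $\Delta_K$, $(R+B^\tp\tP_KB)^{-1}$ are uniformly bounded above and below, and combining the global gradient-dominance bound with the sufficient decrease gives $J(K^{n+1})-J(K^*)\le(1-\varrho)\big(J(K^n)-J(K^*)\big)$ for some $\varrho\in(0,1)$ once the iterates enter that neighborhood. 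A Riccati-monotonicity lemma ($\tP_{K^{n+1}}\preceq\tP_{K^n}$ along the iterates) streamlines this by keeping all relevant quantities bounded throughout and validating a fixed stepsize.

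Finally, for $\eta=\tfrac12$ the Gauss-Newton update in Equation~\ref{eq:exact_gn} collapses to $K^{n+1}=(R+B^\tp\tP_{K^n}B)^{-1}B^\tp\tP_{K^n}A$, which is precisely one step of the policy-iteration/Kleinman recursion for the Riccati equation in Equation~\ref{equ:def_mixed_formulation} — a Newton step on the associated Riccati operator. Near the stabilizing fixed point $K^*$ the Fréchet derivative of this operator is boundedly invertible, so the standard Newton--Kleinman analysis \cite{kleinman1968iterative,hewer1971iterative} applies and gives $\|K^{n+1}-K^*\|=O(\|K^n-K^*\|^2)$; the global convergence established above guarantees the iterates reach this basin of quadratic attraction in finitely many steps, completing the argument.
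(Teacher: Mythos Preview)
Your proposal is essentially the approach of \cite[Section~5]{zhang2021policy}, which is all the paper itself invokes: the cost-difference identity with $\tP_K$ and $\Delta_{K'}$ replacing $P_K$ and $\Sigma_{K'}$, the $E_K$-gradient-dominance obtained by completing the square at $K'=K^*$, sufficient decrease under the stepsizes of Theorem~\ref{thm:stability_update}, and the Kleinman/Newton interpretation of Gauss--Newton at $\eta=1/2$ for the local quadratic rate. So the skeleton is right.

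One point of logic needs tightening. For the local linear rate you argue that ``monotone decrease of $\{J(K^n)\}$ together with the $O(1/N)$ bound forces the iterates into a neighborhood of $K^*$ bounded away from $\partial\cK$,'' and only afterwards mention Riccati monotonicity as a streamlining. This ordering is backwards: by Lemma~\ref{thm:ncnc_property} the cost is \emph{not} coercive, so sublevel sets of $J$ need not be bounded away from the $\mathcal{H}_\infty$ boundary, and decreasing $J$ alone gives you no uniform control over $\tP_K$ or $\Delta_K$. The monotonicity $P_{K^{n+1}}\preceq P_{K^n}$ (hence $\tP_{K^{n+1}}\preceq\tP_{K^n}$) is not optional --- it is exactly what keeps the closed-loop $\mathcal{H}_\infty$ norm bounded away from $\gamma$ along the whole trajectory (this is the Bounded-Real-Lemma content behind Theorem~\ref{thm:stability_update}), and therefore what delivers uniform two-sided bounds on $\tP_{K^n}$, $\Delta_{K^n}$, and the NPG stepsize from the outset. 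Establish that lemma early and use it directly; the local-rate argument then goes through cleanly. Relatedly, your parenthetical ``(and under the standing nondegeneracy assumptions otherwise)'' for the lower bound on $\Delta_K$ when $DD^\tp$ is singular is a genuine gap you should either fill or explicitly restrict to $DD^\tp\succ0$ for the $\|E_{K^n}\|_F^2$ statement.
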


The proof of Theorem \ref{theorem:global_exact_conv} can be found in \cite[Section 5]{zhang2021policy}.  The theorem shows that though being nonconvex and non-coercive, certain PO methods can still find the globally optimal solution of mixed  $\cH_2/\cH_\infty$ control at  globally sublinear and locally superlinear rates. The global rate is sublinear, in contrast to the linear one for LQR, as the global  gradient dominance property  does not necessarily hold here. 

Finally, we remark that interestingly, \cite{zhang2021policy} also numerically compared the computation efficiency of PO methods and existing solvers for mixed  $\cH_2/\cH_\infty$ control (see, e.g., \cite{arzelier2011h2}). It has been shown that PO methods can indeed be much faster than these existing solvers 
(though note that these solvers can handle more general cases such as the output feedback case), especially for large-scale dynamical systems. 
This justifies the desired scalability of PO methods  for control synthesis; see \cite{zhang2021policy} for more numerical examples.

\subsubsection{Model-Free Implementations: Connections to Dynamic Games and Adversarial RL}\label{sec:connection_game_RARL}
There is a fundamental connection between mixed $\mathcal{H}_2/\mathcal{H}_\infty$ control and linear quadratic dynamic games \cite{basar95}.  This connection will not only allow us to implement the PO algorithms using model-free adversarial RL techniques, but also enable the development of PO methods for solving these dynamic  games.

As LQR can be viewed as the benchmark for single-agent RL in continuous space, linear quadratic (LQ) dynamic games serve as the  benchmark for studying multi-agent RL. Indeed, zero-sum LQ games have been investigated as 
a fundamental settings in multi-agent RL  \cite{al2007model,zhang2019policy,bu2020global,gravell2020policya,zhang2021provablya,zhang2021multi}. 
Specifically, consider  a zero-sum dynamic game with linear dynamics
$x_{t+1}=Ax_t+Bu_t+Dw_t$. The objective of  player $1$ (player $2$) is to minimize (maximize) the value function $
\cC:=\EE_{x_0\sim\cD}\big[\sum_{t=0}^\infty (x_t^\tp Qx_t+u_t^\tp R^uu_t-w_t^\tp R^w w_t)\big], $ 
where $x_0\sim \cD$ for some distribution $\cD$, and $(Q, R^u, R^w)$ are positive definite matrices. It is known that the Nash equilibrium (NE), the solution concept for the problem, can be achieved  
with state-feedback policy classes, i.e., there exists a pair $(K^*,L^*)$, such that the NE satisfies $u_t^*=-K^*x_t$ and $v_t^*=-L^*x_t$ \cite{basar95,basar99}. Hence, one can parameterize the controllers using  matrices $(K,L)$, and solve for $\min_K\max_L\cC(K,L)$, where $\cC$ is the accumulated cost under this pair $(K,L)$. This leads to a multi-agent PO problem. 
Intriguingly,  the NE to the game is provided by the solution to a specific mixed $\cH_2/\cH_\infty$ control problem \cite{basar95}.  With this connection, the natural PG and Gauss-Newton methods in Equation \ref{eq:exact_npg} and Equation  \ref{eq:exact_gn} can be equivalently transformed into provably convergent double-loop PO algorithms for the above LQ game. Related algorithmic developments have been documented in \cite{zhang2019policy,bu2019global}. The game formulation for mixed $\mathcal{H}_2/\mathcal{H}_\infty$ control is powerful in that these double-loop variants of the natural PG and Gauss-Newton methods  can be implemented in a model-free manner. See   \cite{zhang2019policy,keivan2021model,zhang2021derivative} for detailed discussions on model-free implementations and related sample complexity results.

An important issue in RL is the simulation-to-real gap. One common remedy is to use robust adversarial RL (RARL) algorithms which jointly learn  a  \emph{protagonist} 
and an \emph{adversary}, where the former learns to robustly perform the control tasks under the disturbances created by the  latter \cite{morimoto2005robust,pinto2017robust}. 
Policy-based RARL methods can be viewed as model-free variants of multi-agent PO methods for dynamic games. Therefore,  the PO theory for mixed $\mathcal{H}_2/\mathcal{H}_\infty$ control can also be applied to study the properties of RARL algorithms in the LQ setting. 
See \cite{zhang2020stability} for more details about this connection and results.

\vspace{5pt}\noindent\textbf{Indefinite LQR.} 
The inner-loop subroutine in both zero-sum dynamic games and RARL reduces to a generalized LQR problem whose state cost matrix $Q$ is not positive semi-definite. 
The formulation of indefinite LQR is similar to that in \S\ref{sec:formulation}, except that the $Q$ and $R$ matrices are symmetric but indefinite. 
Then the  cost may not be coercive, and the descent of the cost does not ensure the stability of the iterates \cite{bu2020global}. Nevertheless, global convergence of policy optimization methods can be established; see \cite{bu2020global}.

\subsection{PO for  $\mathcal{H}_\infty$  State-Feedback Synthesis: Nonsmoothness and Convergence}\label{sec:Hinf_optimal}

In this section, we consider the state-feedback $\mathcal{H}_\infty$ optimal control  problem. 
 Classical convex approaches for this task requires
reparameterizing the problem into a higher-dimensional convex domain~\cite{zhou1996robust,Dullerud99,befb94}.
In contrast, we view this problem as a benchmark of PO for robust control. 
We will discuss how to provably find  the optimal $\mathcal{H}_\infty$ controller in the policy space directly. 
Recall that for the PO formulation of $\mathcal{H}_\infty$ state-feedback synthesis, the cost function $J(K)$ is given by Equation \ref{eq:hinfcost}, and the feasible set $\mathcal{K}$ is specified by Equation~\ref{equ:def_stabilizing_K}. 
A main technical challenge here is that this $\mathcal{H}_\infty$ cost  can be non-differentiable at  some important feasible points, e.g., the optimal points  \cite{apkarian2006nonsmooth,arzelier2011h2,gumussoy2009multiobjective,burke2020gradient}. 
From Equation~\ref{eq:hinfcost}, we can see that this cost function is subject to two sources of nonsmoothness.  Namely, the largest eigenvalue for a fixed frequency $\omega$ is nonsmooth, and the optimization step over $\omega\in [0, 2\pi]$ is also nonsmooth. 
We also know that the feasible set from Equation \ref{equ:def_stabilizing_K} is nonconvex. 
Hence, the resultant PO problem for $\mathcal{H}_\infty$ state-feedback synthesis is nonconvex nonsmooth. There has been a large family of nonsmooth $\mathcal{H}_\infty$ policy search algorithms developed based on the concept of Clarke subdifferential \cite{apkarian2006nonsmooth,arzelier2011h2,gumussoy2009multiobjective,burke2020gradient}. 
However, the global convergence theory of PO methods on the $\mathcal{H}_\infty$ state-feedback synthesis has not been established until very recently.
 Next, we review such global convergence results from~\cite{Guo2022hinf}.

First, we introduce a few concepts related to subdifferential of nonconvex functions.
A function $J:\mathcal{K}\rightarrow \R$ is locally Lipschitz if for
any bounded $S\subset \mathcal{K}$, there exists a constant $L > 0$ such that $|J(K)-J(K')|\le L \norm{K-K'}_F$ for all $K,K'\in S$. 
Based on Rademacher's theorem, a locally Lipschitz function is differentiable almost everywhere, and the Clarke subdifferential is well defined for all feasible points. We define the Clarke subdifferential as $\partial_C J(K):=\conv\{\lim_{i\rightarrow \infty}\nabla J(K_i):K_i\rightarrow K,\,K_i\in\dom(\nabla J)\subset \mathcal{K}\}$,
where $\conv$ denotes the convex hull.  For any given direction $V$ (which has the same dimension as $K$), the generalized Clarke directional derivative of $J$ is defined as
\begin{align}
J^{\circ}(K,V):=\lim_{K'\rightarrow K}\sup_{t\searrow 0} \frac{J(K'+tV)-J(K')}{t}.
\end{align}
In contrast, the (ordinary) directional derivative is defined as follows (when existing)
\begin{align}
J'(K,V):=\lim_{t\searrow 0} \frac{J(K+tV)-J(K)}{t}.
\end{align}

In general, the Clarke directional derivative can be different from the (ordinary) directional derivative which may not even exist for some feasible points.
The objective function $J(K)$ is subdifferentially regular if for every $K\in \mathcal{K}$, the ordinary directional
derivative always exists and coincides with the generalized one for every direction, i.e.,  $J'(K,V)=J^{\circ}(K,V)$. The following result holds for the $\mathcal{H}_\infty$ objective function.
\begin{proposition}
Let $\mathcal{K}$ be non-empty. Then the $\mathcal{H}_\infty$ objective function defined by Equation~\ref{eq:hinfcost} is
locally Lipschitz and subdifferentially regular over the stabilizing feasible set~$\mathcal{K}$.
\end{proposition}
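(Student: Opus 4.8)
The plan is to strip off the square root in Equation~\ref{eq:hinfcost} and recognize the resulting object as a supremum, over the compact frequency set $[0,2\pi]$, of functions each of which is a \emph{convex} function composed with a smooth map; both local Lipschitzness and Clarke regularity then follow from standard nonsmooth calculus. Concretely, write $J(K)^2=\sup_{\omega\in[0,2\pi]} g(K,\omega)$ with $g(K,\omega):=\lambda_{\max}\!\big(M_K(\omega)\big)$ and $M_K(\omega):=(e^{-j\omega}I-A+BK)^{-\tp}(Q+K^\tp RK)(e^{j\omega}I-A+BK)^{-1}$. Fix a bounded $S$ with $\bar S\subset\mathcal{K}$ (this is how the locally-Lipschitz definition must be read, since $J$ is unbounded as a closed-loop pole approaches the unit circle). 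On $\bar S$ the spectral radius $\rho(A-BK)$ stays uniformly below $1$, so $(e^{j\omega}I-A+BK)^{-1}$ is bounded uniformly over $(\omega,K)\in[0,2\pi]\times\bar S$; hence $K\mapsto M_K(\omega)$ is $C^\infty$ with a Lipschitz bound $L_0$ that is uniform in $\omega$, and since $\lambda_{\max}$ is $1$-Lipschitz, $|g(K,\omega)-g(K',\omega)|\le L_0\|K-K'\|_F$ uniformly in $\omega$, so $|J(K)^2-J(K')^2|\le L_0\|K-K'\|_F$. Assuming the nondegeneracy condition $Q\succ0$ (which ensures $J>0$ on $\mathcal{K}$), one has $J(K)^2\ge \lambda_{\min}(Q)/(1+\|A-BK\|)^2$, bounded below by a positive constant on $\bar S$; since $t\mapsto\sqrt{t}$ is Lipschitz on $[c,\infty)$ for $c>0$, $J=\sqrt{J^2}$ is locally Lipschitz as well (equivalently, $J$ is the $\mathcal{H}_\infty$ norm of a transfer function that depends rationally on $K$, and a norm is globally $1$-Lipschitz in its argument).

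For subdifferential regularity I would argue pointwise at each $K\in\mathcal{K}$ in three steps. \textbf{Step 1:} for fixed $\omega$, $g(\cdot,\omega)=\lambda_{\max}\circ M_{\cdot}(\omega)$ is Clarke-regular, being the composition of the convex (hence regular and globally Lipschitz) function $\lambda_{\max}$ with the $C^1$ map $K\mapsto M_K(\omega)$; Clarke's chain rule for regular compositions applies with no constraint qualification because the inner map is continuously differentiable, and yields $\partial_C g(\cdot,\omega)(K)=DM_K(\omega)^{*}\,\partial_C\lambda_{\max}(M_K(\omega))$. \textbf{Step 2:} $J^2=\sup_{\omega\in[0,2\pi]}g(\cdot,\omega)$ is regular by the nonsmooth max-rule for a pointwise supremum of regular functions over a compact index set: $[0,2\pi]$ is compact, $(K,\omega)\mapsto g(K,\omega)$ is jointly continuous, the family is uniformly locally Lipschitz in $K$ (as shown above), and each member is regular by Step~1 — exactly the hypotheses under which the supremum is regular, with $\partial_C J^2(K)=\conv\{\partial_C g(\cdot,\omega)(K):\omega\in\Omega(K)\}$ for $\Omega(K)$ the set of maximizing frequencies. \textbf{Step 3:} $J=\sqrt{J^2}$, and since $J^2$ is positive, locally Lipschitz, and regular, and $t\mapsto\sqrt{t}$ is $C^1$ and strictly increasing on the range of $J^2$, both the ordinary and the generalized directional derivatives of $J$ equal $(2\sqrt{J^2(K)})^{-1}$ times the corresponding derivatives of $J^2$, so regularity is preserved and $J$ is subdifferentially regular on all of $\mathcal{K}$.

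The main obstacle is Step~2: one must verify carefully the hypotheses of the nonsmooth max-rule — joint continuity of $g$ on $\mathcal{K}\times[0,2\pi]$ and, crucially, the \emph{uniform-in-$\omega$} local Lipschitz bound — and confirm that the version invoked actually returns regularity of the supremum rather than merely an outer estimate for $\partial_C J^2$; the latter is precisely where regularity of each $g(\cdot,\omega)$ from Step~1 is essential (this is the step that fails for a general nonsmooth max, and is why both sources of nonsmoothness in Equation~\ref{eq:hinfcost} — the largest eigenvalue at fixed $\omega$ and the sup over $\omega$ — must be treated through \emph{regular} building blocks). A secondary, routine point is that the square-root reduction needs $J$ bounded away from $0$, which is what the assumption $Q\succ0$ buys; without such nondegeneracy one would instead have to show directly that $K\mapsto\lambda_{\max}^{1/2}(M_K(\omega))$ is regular, a concave-monotone composition whose regularity requires a separate (still elementary) argument.
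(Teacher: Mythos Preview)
Your proposal is correct. The paper itself does not supply a proof of this proposition; it simply states that the result is well known and points to \cite{Guo2022hinf} for details. What you have written is precisely the standard nonsmooth-calculus argument one expects there: each frozen-frequency function $g(\cdot,\omega)=\lambda_{\max}\circ M_{\cdot}(\omega)$ is Clarke regular as a convex-$C^1$ composition, the compact-index supremum rule transfers regularity to $J^2$, and the $C^1$ increasing outer map $\sqrt{\cdot}$ preserves regularity on $\{J^2>0\}$. Your identification of Step~2 (the uniform-in-$\omega$ Lipschitz bound and the precise hypotheses of Clarke's max rule that yield regularity, not merely an outer estimate) as the crux is exactly right.

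One small simplification you may want: instead of passing through $J^2$ and then taking a square root (which forces the side assumption $Q\succ 0$ to keep $J$ bounded away from zero), write
\[
J(K)=\sup_{\omega\in[0,2\pi]}\sigma_{\max}\!\left(\begin{bmatrix}Q^{1/2}\\ R^{1/2}K\end{bmatrix}(e^{j\omega}I-A+BK)^{-1}\right).
\]
Here the inner map $K\mapsto \begin{bmatrix}Q^{1/2}\\ R^{1/2}K\end{bmatrix}(e^{j\omega}I-A+BK)^{-1}$ is smooth (indeed rational) in $K$ without any square-root of a $K$-dependent matrix, and $\sigma_{\max}$ is a norm, hence convex and $1$-Lipschitz. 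Your Steps~1--2 then apply verbatim and Step~3 disappears, removing the $Q\succ 0$ caveat entirely.
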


The above result is well known. See \cite{Guo2022hinf} for more explanations.
Consequently,  the Clarke subdifferential for the $\mathcal{H}_\infty$ objective function is well defined for all $K\in \mathcal{K}$. We say that $K^\dag$ is a Clarke stationary point if  $0\in \partial_C J(K^\dag)$. The subdifferentially regular property guarantees that the directional derivatives at any Clarke stationary points $J'(K^\dag, V)$ are always non-negative.
Since $\mathcal{K}$ is open, the global minimum has to be a Clarke stationary point. Searching Clarke stationary points provably requires advanced subgradient algorithms, since
generating a good descent direction for nonsmooth optimization is non-trivial. The concept of Goldstein subdifferential \cite{goldstein1977optimization} is relevant and stated below.
\begin{definition}
Suppose $J$ is locally Lipschitz. Given a point $K\in\mathcal{K}$ and a parameter $\delta>0$, the Goldstein subdifferential of $J$ at $K$ is defined to be the following set 
\begin{align} \label{Gold_sub}
\partial_\delta J(K):=\conv \left\{\cup_{K'\in\mathbb{B}_\delta(K)} \partial_C J(K')\right\},
\end{align}
where $\mathbb{B}_\delta(K)$ denotes the $\delta$-ball around $K$. It is implicitly assumed $\mathbb{B}_\delta (K)\subset\mathcal{K}$.
\end{definition}
Importantly, the minimal norm element of the Goldstein subdifferential generates a good descent direction. The minimal norm element in $\partial_\delta J(K)$, denoted as $F$, will satisfy 
$J(K-\delta F/\norm{F}_F)\le J(K)-\delta \norm{F}_F$, if we have $\partial_\delta J(K)\subset \mathcal{K}$.
This fact has inspired the developments of Goldstein's subgradient method \cite{goldstein1977optimization} and related variants for nonsmooth $\mathcal{H}_\infty$ control \cite{arzelier2011h2,gumussoy2009multiobjective,burke2020gradient}. Recently, it has been proved  that Goldstein's subgradient method  can be guaranteed to find the global minimum of the $\mathcal{H}_\infty$ state-feedback synthesis problem despite the nonconvexity of the feasible set \cite{Guo2022hinf}. 
We summarize this result as~follows.
\begin{theorem}
Suppose $(Q,R)$ are positive definite, and the pair $(A,B)$ is stabilizable. Denote $J^*=\min_{K\in\mathcal{K}} J(K)$.  For $\mathcal{H}_\infty$ state-feedback synthesis, the following statements hold.
\begin{enumerate}
    \item The $\mathcal{H}_\infty$ objective function defined by Equation \ref{eq:hinfcost} is coercive over $\mathcal{K}$.
    \item For any $K\in\mathcal{K}$ satisfying $J(K)>J^*$, there exists $V\neq 0$ such that $J'(K,V)<0$.
\item Any Clarke stationary points of  the $\mathcal{H}_\infty$ objective function are global minimum.
\item The sublevel set $\mathcal{K}_\gamma$ is compact. There is a strict separation between  $\mathcal{K}_\gamma$ and $\mathcal{K}^c$ (which is the complement of the feasible set $\mathcal{K}$). In other words, we have ${\rm dist}(\mathcal{K}_\gamma, \mathcal{K}^c)>0$.
\item Suppose $K^0\in \mathcal{K}$. Denote $\Delta_0:={\rm dist}(\mathcal{K}_{J(K^0)}, \mathcal{K}^c)>0$. Choose $\delta^n=\frac{0.99\Delta_0}{n+1}$ for all $n$.
Then Goldstein's subgradient method $K^{n+1}=K^n-\delta^n F^n/\norm{F^n}_F$ with $F^n$ being the minimum norm element of $\partial_{\delta^n} J(K^n)$ is guaranteed to stay in $\mathcal{K}$ for all $n$. In addition,  we have $J(K^n)\rightarrow J^*$ as $n\rightarrow \infty$.
\end{enumerate}
\end{theorem}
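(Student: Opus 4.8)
\emph{Overview.} The plan is to establish the five statements in roughly the order (1), (4), (2)--(3), (5), since each leans on the previous, and to identify Statement 2 as the main obstacle.

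\emph{Statements 1 and 4 (coerciveness, compactness, separation).} For coerciveness I would argue in two regimes. If $\|K\|_2\to\infty$ along a sequence in $\mathcal{K}$, lower bound $J(K)$ by a dimension-dependent constant times the operator norm of the first Markov parameter of the closed-loop transfer function $G_K(z):=(Q+K^\tp RK)^{1/2}(zI-A+BK)^{-1}$, namely $\|(Q+K^\tp RK)^{1/2}\|$, which tends to $+\infty$ since $R\succ 0$; hence $J(K)\to\infty$. If instead $K$ converges to a point of $\partial\mathcal{K}=\{\rho(A-BK)=1\}$, pick an eigenvalue $\lambda$ of $A-BK$ with $|\lambda|\to1$ and evaluate $G_K$ at the frequency $\omega^\star=\arg\lambda$: on the associated eigenvector the resolvent $(e^{j\omega^\star}I-A+BK)^{-1}$ blows up, while $(Q+K^\tp RK)^{1/2}\succeq Q^{1/2}\succ0$ stays bounded below, so again $J(K)\to\infty$. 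This proves coerciveness (Statement 1). Statement 4 then follows exactly as Statement 1 of Theorem~\ref{theorem:main_technical_thm}: coerciveness plus continuity of $J$ force $\mathcal{K}_\gamma$ to be bounded and closed (any unbounded or boundary-approaching limit of a sequence in $\mathcal{K}_\gamma$ would push $J$ to $+\infty$), hence compact; and since the compact set $\mathcal{K}_\gamma\subset\mathcal{K}$ and the closed set $\mathcal{K}^c$ are disjoint, ${\rm dist}(\mathcal{K}_\gamma,\mathcal{K}^c)>0$.

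\emph{Statements 2 and 3 (no spurious minima) --- the crux.} The strategy is to route through the convex (LMI) parameterization of state-feedback $\mathcal{H}_\infty$ synthesis. Let $\Psi(Y,L):=LY^{-1}$, defined on the open convex set
\[
\mathcal{C}:=\Big\{(Y,L):Y\succ0,\ \bmat{Y & AY-BL\\ (AY-BL)^\tp & Y}\succ0\Big\},
\]
which maps onto $\mathcal{K}$ (by a Schur complement the LMI reads $(A-B\Psi(Y,L))Y(A-B\Psi(Y,L))^\tp\prec Y$, i.e.\ $\rho(A-B\Psi(Y,L))<1$), together with the associated convex surrogate $\phi$ --- the optimal value, for fixed $(Y,L)$, of the bounded-real-lemma LMI in the performance variable $\gamma$ --- which majorizes the pulled-back cost, $J(\Psi(Y,L))\le\phi(Y,L)$, with $J(K)=\inf_Y\phi(Y,KY)$ and $\min_{\mathcal{C}}\phi=J^*$. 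Given $K\in\mathcal{K}$ with $J(K)>J^*$, I would: (i) lift $K$ to a point $(Y_K,L_K)\in\mathcal{C}$ at which the surrogate is \emph{tight}, $\phi(Y_K,L_K)=J(K)$ (using the stabilizing solution of the bounded-real/$\mathcal{H}_\infty$ Riccati equation of the closed loop under $K$); (ii) invoke convexity of $\phi$ to take a feasible direction in $\mathcal{C}$ --- e.g.\ toward the image of the optimal $K^*$ --- along which $\phi$ has a strictly negative directional derivative; (iii) push this direction forward through $D\Psi(Y_K,L_K)$ to obtain $V\neq0$. Since $\Psi(Y_K,L_K)=K$, since $J\le\phi$ with equality at $(Y_K,L_K)$, and since $J$ is locally Lipschitz, a short estimate gives $J'(K,V)\le\phi'\big((Y_K,L_K);\cdot\big)<0$ (the directional derivative $J'(K,V)$ exists by subdifferential regularity), which is Statement 2. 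Statement 3 is then immediate: if $0\in\partial_C J(K^\dag)$, subdifferential regularity gives $J'(K^\dag,V)=J^\circ(K^\dag,V)=\max_{g\in\partial_C J(K^\dag)}\langle g,V\rangle\ge0$ for every $V$, so by the contrapositive of Statement 2, $J(K^\dag)=J^*$.

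\emph{Statement 5 (convergence of Goldstein's method).} First, feasibility by induction: if $K^n\in\mathcal{K}_{J(K^0)}$, then ${\rm dist}(K^n,\mathcal{K}^c)\ge\Delta_0>\delta^n$, so $\mathbb{B}_{\delta^n}(K^n)\subset\mathcal{K}$, the Goldstein subdifferential $\partial_{\delta^n}J(K^n)$ and its minimum-norm element $F^n$ are well defined, and the descent property of the minimum-norm direction recalled in the excerpt gives $J(K^{n+1})\le J(K^n)-\delta^n\|F^n\|_F\le J(K^0)$, hence $K^{n+1}\in\mathcal{K}_{J(K^0)}$; the base case is $K^0$. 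Summing the descent inequality yields $\sum_{n\ge0}\delta^n\|F^n\|_F\le J(K^0)-J^*<\infty$, while $\sum_{n\ge0}\delta^n=0.99\Delta_0\sum_{n\ge0}\tfrac1{n+1}=\infty$, so $\liminf_n\|F^n\|_F=0$. Extract a subsequence with $\|F^{n_k}\|_F\to0$; since $\mathcal{K}_{J(K^0)}$ is compact (Statement 4), refine so that $K^{n_k}\to\bar K\in\mathcal{K}_{J(K^0)}$, and since $\delta^{n_k}\to0$, the standard closedness of the map $(\delta,K)\mapsto\partial_\delta J(K)$ at $\delta=0$ for locally Lipschitz $J$ (any limit of elements $g_k\in\partial_{\delta_k}J(K_k)$ with $\delta_k\to0$, $K_k\to\bar K$ lies in $\partial_C J(\bar K)$) yields $0\in\partial_C J(\bar K)$, hence $J(\bar K)=J^*$ by Statement 3. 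Finally $\{J(K^n)\}$ is nonincreasing and bounded below by $J^*$, so it converges, and its limit equals $\lim_k J(K^{n_k})=J(\bar K)=J^*$ by continuity of $J$; thus $J(K^n)\to J^*$.

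\emph{Main obstacle.} The hard part is Statement 2: converting the convexity available only after the $(Y,L)$-reparameterization into a genuine feasible descent direction for the nonsmooth, nonconvex cost in policy space. This requires a careful construction of the tight lift $(Y_K,L_K)$ (handling the boundary of the LMI slice, where the bounded-real Riccati data may become singular when the supremum over $\omega$ is active), and matching, through $D\Psi$, the directional derivative of the convex surrogate with that of $J$ --- in particular ruling out that the frequency-supremum structure produces a direction that decreases the surrogate but not $J$ itself.
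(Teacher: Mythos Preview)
Your proposal is correct and follows essentially the same route as the paper: coerciveness from $Q,R\succ0$ (Statement~1), compactness and strict separation as a consequence of coerciveness (Statement~4), the descent direction in Statement~2 via the convex LMI parameterization of $\mathcal{H}_\infty$ state-feedback synthesis (exactly the mechanism of Theorem~\ref{thm:grad_dominance_param} and Example~2 in \S\ref{sec:convex_para_grad_dom}), Statement~3 from Statement~2 together with subdifferential regularity, and Statement~5 by combining the Goldstein descent inequality with the compactness/separation from Statement~4. Your identification of Statement~2 as the crux---constructing a tight lift into the LMI domain and pushing the convex descent direction back through the map $K=LP^{-1}$---matches the paper's treatment precisely.
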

The coerciveness can be proved using the positive definitness of $(Q,R)$.  Statement~2 follows from the convex paramterization for $\mathcal{H}_\infty$ state-feedback synthesis, and we will discuss more about this point later. Statement 3 can be proved by combining Statement 2 and the sudifferential regular property.  Statement 4 is a consequence of Statement 1. Then one can combine the descent property of Goldstein's subgradient method and Statement 4 to prove the convergence result in Statement 5. Due to some subtlety of nonsmooth nonconvex optimization, sample complexity of PO on nonsmooth $\mathcal{H}_\infty$ synthesis remains unknown.
Discussions on model-free implementations and related issues can be found in~\cite{Guo2022hinf}.

\vspace{0.1in}
\noindent
\textbf{Related work in other settings.}
PO has also been investigated in other control problems  with robustness and risk-sensitivity concerns, other than the LEQG/$\mathcal{H}_\infty$ settings discussed in this survey. See \cite{turchetta2020robust,gravell2020learning,gravell2020policya,pang2021robust,pang2021robusta,venkataraman2019recovering,zhao2021infinite,zhao2021primal,zhao2021global,keivan2021model}.

\section{Case III: PO  with Partial Observations} \label{advance:partial-observations}

In this section, we examine the more challenging case 
of control with partial observation. 
When the system's state is not directly measured, there is an
intricate balance between the achievable control performance and the class of controllers used in PO.
Depending on the policy parameterization, the optimization landscape can become quite different.
We will first survey several recent results on the optimization landscape of PO for LQG, and then point out some of the subtle aspects for more general  output feedback and structured synthesis problems.

\subsection{PO for Linear Quadratic Gaussian Control: Optimization Landscape} \label{sec:LQG_opt_landscape}

In order to characterize the performance of PO algorithms such as PG methods for LQG, it is necessary to understand the landscape of the associated PO formulation in Equation~\ref{eq:opt}, with the cost function given in Equation \ref{eq:LQG_cost_formulation_discrete} and the feasible set given in Equation~\ref{eq:LQG-constraint}. Following standard setup in the literature, we assume that the pairs $(A,B)$ and $(A, W^{1/2})$ are controllable, and $(C,A)$ and $(Q^{1/2}, A)$ are observable. It has been shown that with these
assumptions, the set of stabilizing controllers $\mathcal{K}$ is non-empty, open, unbounded, and can be nonconvex. Moreover, the cost function $J(K)$ is real analytic on the underlying set~$\mathcal{K}$.

 However, beyond these properties, until recently little was known about the geometric and analytical properties of the PO formulation of
 LQG. We will mainly summarize results on the optimization landscape of LQG from ~\cite{zheng2021analysis}, especially with respect to the connectivity of the stabilizing set $\mathcal{K}$ and the structure of the stationary points. 
Several related extensions can be found in \cite{duan2021optimization,duan2022optimization,mohammadi2021lack,hu2022connect}. Before introducing the results, we discuss a special structure for LQG with state-space dynamical controller parameterization in Equation \ref{eq:LQG-K}. It is known that the optimal feedback controller is unique in the frequency domain~\cite[Theorem 14.7]{zhou1996robust}. However, in time domain, this
 controller is not unique: 
 consider the similarity transformation for the state-space form of the controller and note that 
 the two controller parameterizations, 
\[
K=\begin{bmatrix}
0 & C_{K} \\ B_{K} & A_{K}
\end{bmatrix} \quad \mbox{and} \quad \mathcal{T}(T,K):=\begin{bmatrix}
0 & C_{K}T^{-1} \\ TB_{K} & \,T\!A_{K}T^{-1}
\end{bmatrix},
\]
where $T$ is an invertible matrix, 
have identical input-output behavior regardless of the choice of $T$. Thus the cost is invariant with respect to this similarity transformation. Besides this invariance, when a controller $K$ is non-minimal, i.e., $(A_K,B_K)$ is not controllable or $(A_K, C_K)$ is not observable, one can use model reduction to remove the uncontrollable/unobservable modes while keeping the cost the same. 
We will now summarize the main results from \cite{zheng2021analysis}.
First, we have the following theorem on the connectivity of $\mathcal{K}$. \begin{theorem} \label{Theo:disconnectivity}
The set ${\mathcal{K}}$ has at most two path-connected components. When  ${\mathcal{K}}$ has two connected components, these components are diffeomorphic under the similarity transformation $\mathcal{T}(T,\cdot)$ for any invertible matrix with $\det T<0$.
\end{theorem}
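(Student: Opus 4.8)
The plan is to exploit the action of $GL_n$ on controller realizations by similarity transformations, reduce to \emph{minimal} controllers, and then combine the connectedness of the set of transfer functions of stabilizing controllers (via the Youla parametrization) with the fact that $GL_n$ has exactly two connected components. Throughout, recall that $\mathcal{K}$ is open, hence locally path-connected, so path-components coincide with connected components.

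I would begin with the elementary but central observation that for every invertible $T$ the map $\mathcal{T}(T,\cdot)$ is a diffeomorphism of $\mathcal{K}$ onto itself: the closed-loop matrix $\bmat{A & BC_K\\ B_KC & A_K}$ is conjugated by the block-diagonal invertible matrix $\mathrm{diag}(I,T)$, so its spectrum — and hence Schur stability — is unchanged; the inverse is $\mathcal{T}(T^{-1},\cdot)$; and both directions are smooth since everything is polynomial/rational in the entries. One also checks $\mathcal{T}(T_1,\cdot)\circ\mathcal{T}(T_2,\cdot)=\mathcal{T}(T_1T_2,\cdot)$. Since the group of matrices with $\det T>0$ is path-connected, a path from $I$ to such a $T$ yields an isotopy from the identity to $\mathcal{T}(T,\cdot)$, so $\mathcal{T}(T,\cdot)$ maps each path-component of $\mathcal{K}$ to itself whenever $\det T>0$. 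Consequently, any two similarity transformations with negative determinant differ by one with positive determinant, hence induce the \emph{same} permutation $\sigma$ of the set of path-components; moreover $\sigma^2=\mathrm{id}$ because $\det(T^2)>0$. So the path-components of $\mathcal{K}$ fall into $\sigma$-orbits of size one or two, and it remains to show there is a single orbit.

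Next I would reduce to minimal controllers (those with $(A_K,B_K)$ controllable and $(A_K,C_K)$ observable): since $\mathcal{K}$ is open, any $K\in\mathcal{K}$ is joined inside $\mathcal{K}$ to a nearby minimal one by a short segment, non-minimality being a non-generic, lower-dimensional condition; hence every path-component of $\mathcal{K}$ contains minimal controllers. Then fix once and for all a minimal reference controller $\hat K\in\mathcal{K}$. Given any minimal $K\in\mathcal{K}$, the set of transfer functions of stabilizing controllers is connected — e.g. through the Youla parametrization it is the continuous (linear-fractional) image of a star-shaped family of stable parameters — so there is a continuous path of stabilizing transfer functions from that of $K$ to that of $\hat K$. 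I would lift this to a continuous path of order-$n$ realizations lying in $\mathcal{K}$: where the McMillan degree equals $n$, the minimal realization is unique up to a continuously-varying $GL_n$ element; where the degree drops, one pads with extra Schur modes decoupled from the loop (the set of Schur matrices is star-shaped about $0$, hence contractible, so the padding varies continuously and keeps the closed loop stable). The endpoint of the lift is some order-$n$ — hence minimal — realization of the transfer function of $\hat K$, i.e. $\mathcal{T}(T,\hat K)$ for a unique $T$; thus $K$ lies in the component of $\hat K$ if $\det T>0$ and in that of $\sigma(\hat K)$ if $\det T<0$. Hence $\mathcal{K}$ has at most two path-components; and if it has two, $\sigma$ cannot fix both (that would contradict the previous sentence applied to a minimal representative of each), so $\sigma$ swaps them, and they are therefore diffeomorphic via $\mathcal{T}(T_0,\cdot)$ for any $\det T_0<0$.

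The step I expect to be the real obstacle is the lifting: showing that a homotopy of stabilizing transfer functions can be realized by a \emph{continuous} family of order-$n$, possibly non-minimal, stabilizing realizations, and — crucially — controlling the orientation (the sign of the accumulated similarity) as the path crosses parameter values where the McMillan degree drops below $n$ and the realization momentarily loses minimality. That transition, together with the codimension-one subtlety in the reduction to minimal controllers when the plant has a single input or output (where one instead deforms the uncontrollable/unobservable modes directly), is where the genuine work lies; everything else is bookkeeping with the $GL_n$ action and its two components.
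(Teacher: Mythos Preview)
Your high-level structure --- exploit the $GL_n$ similarity action, reduce to a connected ``source'' via a convex parametrization, and let the two components of $GL_n$ account for at most two components of $\mathcal{K}$ --- is exactly the skeleton the paper indicates. The paper (being a survey) gives only a one-line hint: the proof rests on a \emph{convex reparameterization of the LQG problem}, meaning the Scherer-type LMI change of variables for full-order dynamic output feedback. That reparameterization works directly at the level of order-$n$ \emph{realizations}: a convex (hence path-connected) set of LMI variables maps onto $\mathcal{K}$, and the only ambiguity in recovering $(A_K,B_K,C_K)$ is a factorization step with two smooth branches, corresponding precisely to the two $GL_n$ orientation classes. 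The ``at most two components'' conclusion then falls out without ever passing through transfer functions, and your involution-$\sigma$ bookkeeping finishes the second claim exactly as you wrote it.

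Your route differs in that Youla is a convex parametrization at the \emph{transfer-function} level, and this is where a genuine gap appears --- one you partly flag but partly miss. You worry about continuity of the lift when the McMillan degree \emph{drops below} $n$, but the more serious problem is the opposite direction: along a straight-line (or star-shaped) path in the Youla parameter $Q$, the McMillan degree of the resulting controller can \emph{exceed} $n$, and then there is no order-$n$ realization to lift to at all. Nothing in your sketch bounds the degree from above; the Youla parameters $Q_1,Q_2$ of two order-$n$ controllers need not themselves have small degree, and their convex combinations generically produce controllers of strictly higher order. Repairing this would force you to control degree along the homotopy, which in effect pushes you back to a realization-level parametrization --- precisely what the LMI change of variables supplies. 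So the cleanest fix is to replace the Youla step by the LMI reparameterization and drop the lifting argument entirely; the rest of your argument (density of minimal controllers, the $\sigma$-orbit reduction, the diffeomorphism claim) then goes through as written.
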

On a conceptual level, the proof is based on a convex reparameterization of the LQG problem. Figure~\ref{fig:LQG-connectivity} shows two examples for $\mathcal{K}$, with one or two connected components. 
\begin{figure*}[!t]
	\centering
	\begin{tabular}{cc}
		\hskip-110pt\includegraphics[width=0.3\textwidth]{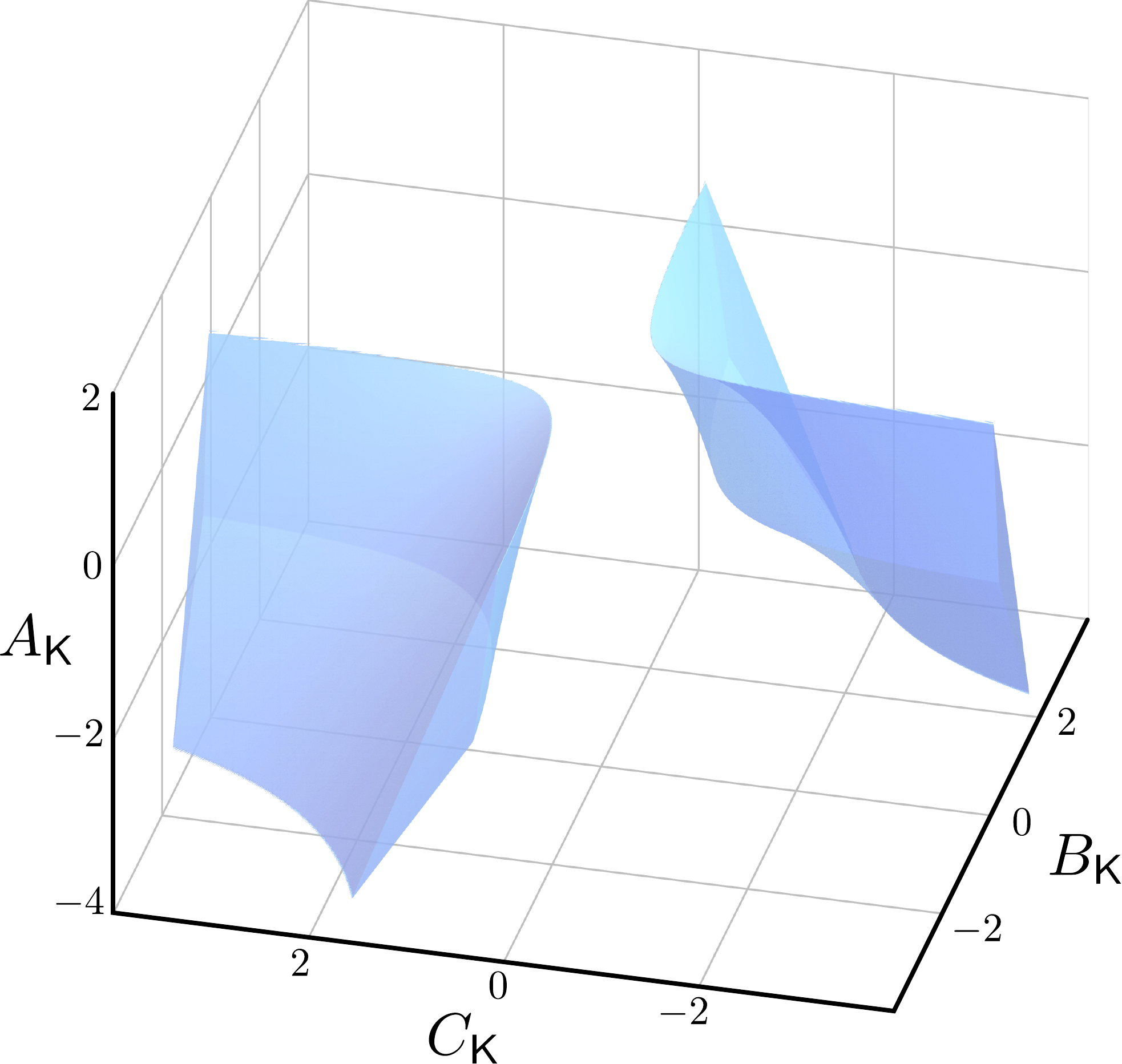}
		&
		\hskip-190pt\includegraphics[width=0.3\textwidth]{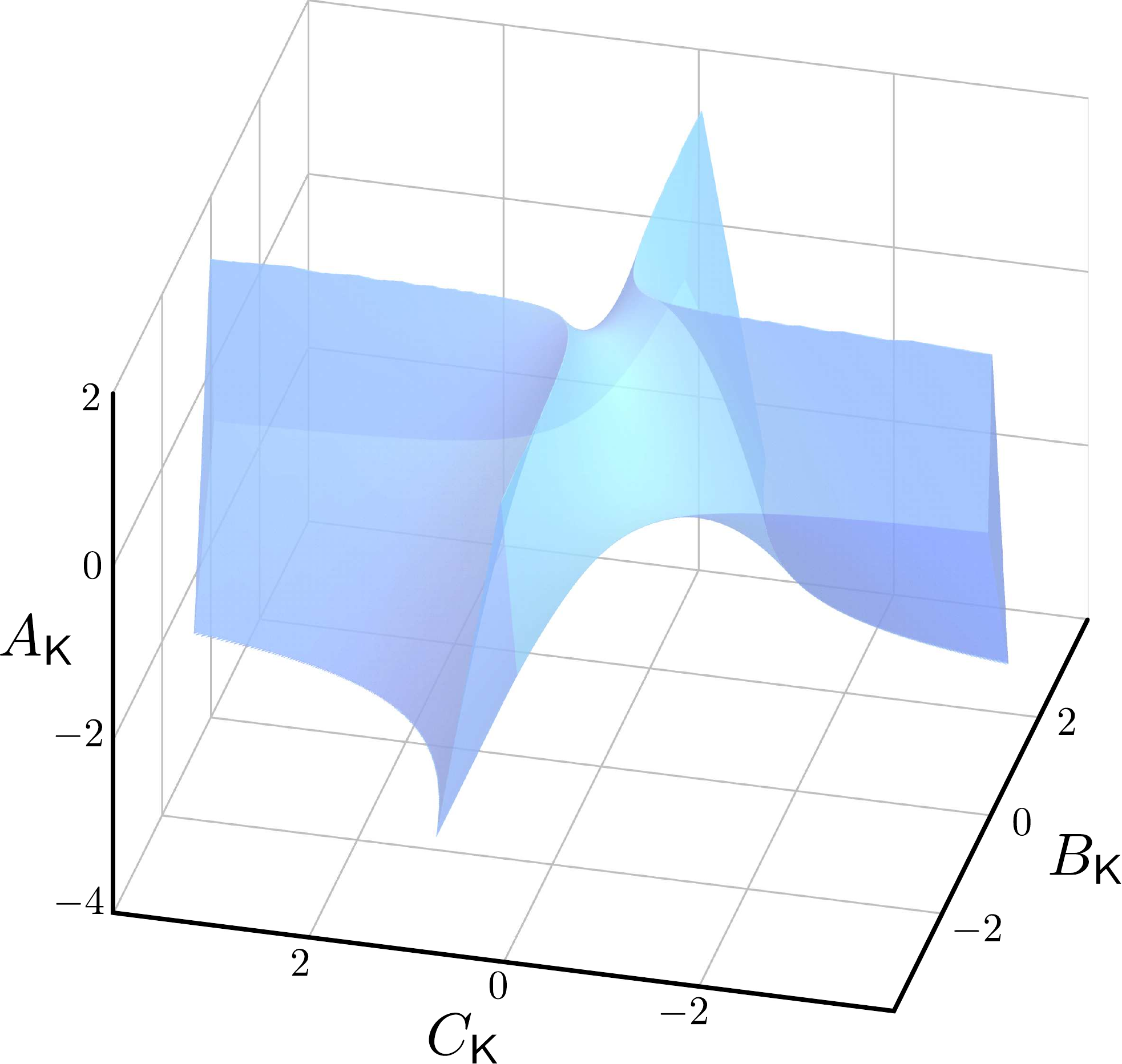}
		\\ 
		\hskip-95pt(a) System   parameters: $A=\frac{3}{2}, B=1, C=1$  & \hskip -190pt(b) System parameters: $A=\frac{2}{3}, B=1, C=1$
	\end{tabular}
	\vspace{7pt}
	\caption{Two examples of the feasible set $\mathcal{K}$ for LQG: (a) a disconnected feasible set, (b) a connected feasible set.}
	\label{fig:LQG-connectivity}
\end{figure*}

For policy gradient algorithms and other local search methods, the connectivity of the domain (the set of stabilizing controllers) is important since there are
no jumping iterates between different connected components. 
Nevertheless, in light of Theorem~\ref{Theo:disconnectivity} and 
the fact that the similarity transformation does not change the input/output behavior of a controller, it makes no difference to search over either path-connected component in $\mathcal{K}$ even if $\mathcal{K}$ is not path-connected.
In fact, one can further show that any strict sublevel sets of the LQG PO problem have very similar connectivity properties \cite{hu2022connect}.
Such observations
are encouraging for devising gradient-based local search algorithms for LQG.

\begin{figure*}[!t]
	\centering
	\begin{tabular}{cc}
		\hskip-110pt\includegraphics[width=0.3\textwidth]{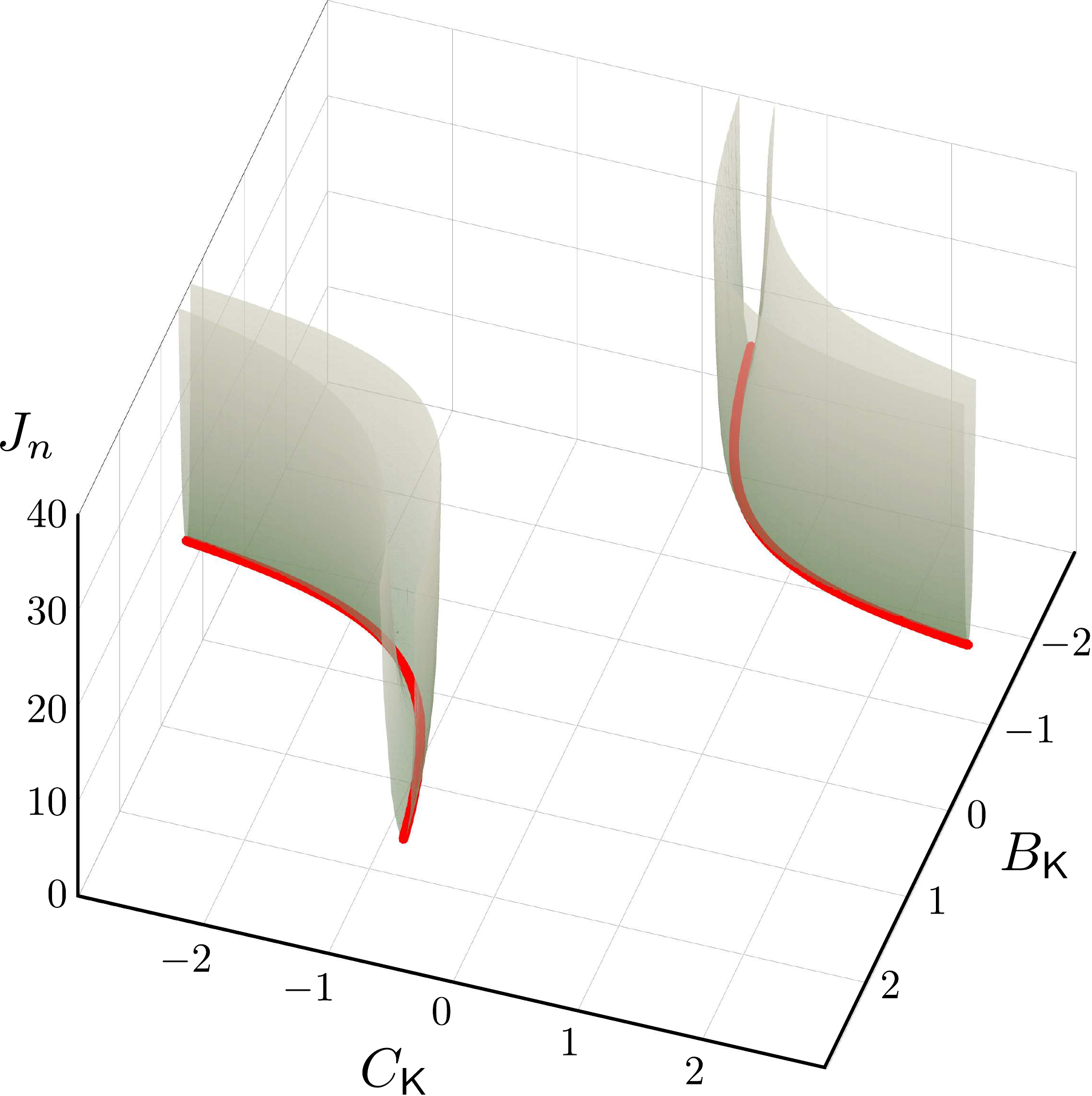}
		&
		\hskip-190pt\includegraphics[width=0.3\textwidth]{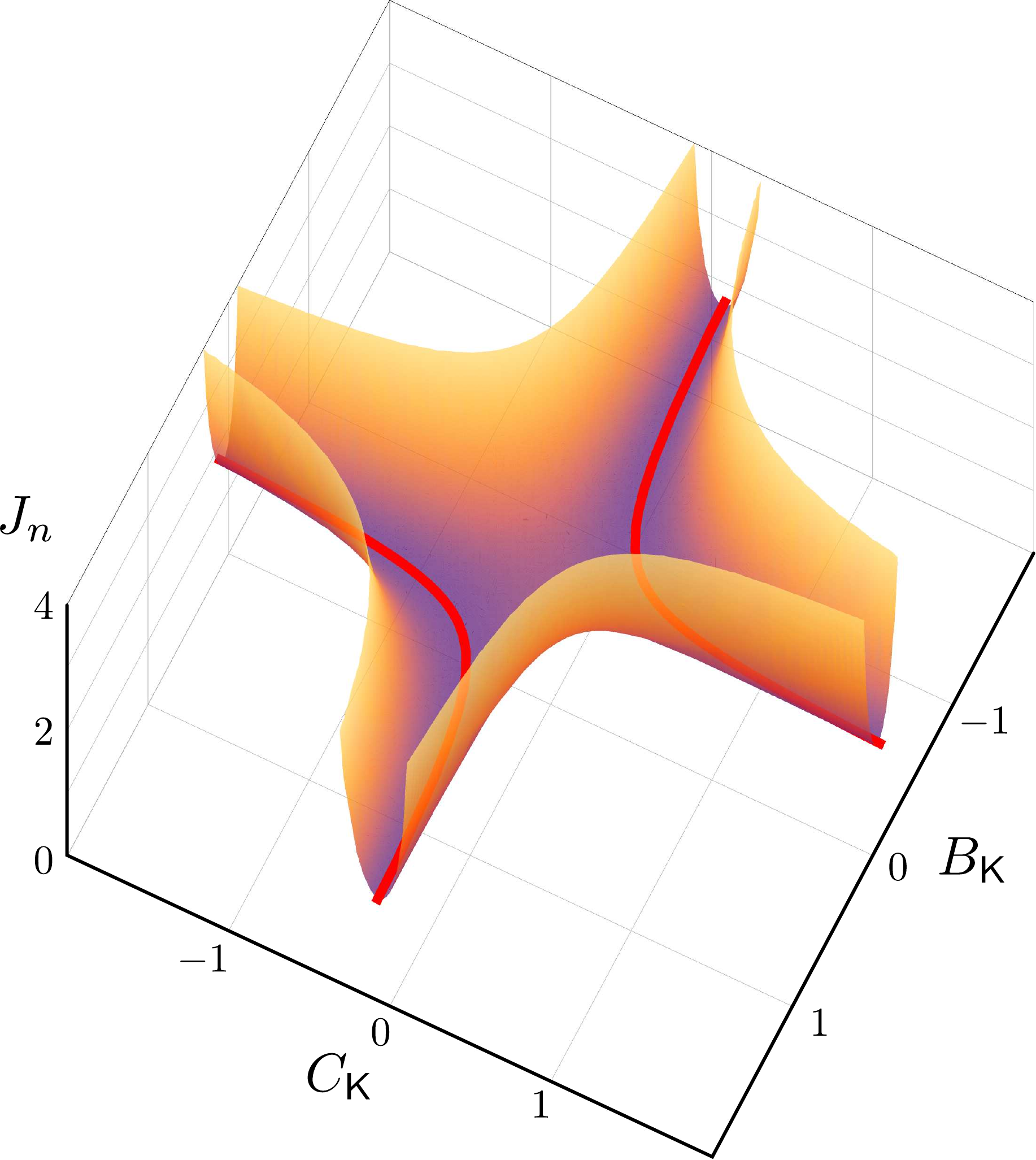}
		\\
		\hskip-95pt(a) System  parameters: $A=\frac{3}{2}, B=1, C=1$  & \hskip -190pt(b) System parameters: $A=\frac{2}{3}, B=1, C=1$ 
	\end{tabular}
	\vspace{8pt}
	    \caption{Non-isolated and disconnected globally optimal LQG controllers. In both cases, we set $Q = 1, R = 1, V = 1, W = 1 $. (a) LQG cost for the system in Figure~\ref{fig:LQG-connectivity}.a when fixing $A_{K} =-0.07318$, for which the set of globally optimal points $\left\{(B_{K}, C_{K}) \mid B_{K}C_{K}=-0.13684\right\}$ has two connected components. (b) LQG cost for the system in Figure~\ref{fig:LQG-connectivity}.b when fixing $A_{K} = -0.67360$, for which the set of globally optimal points $\left\{(B_{K}, C_{K}) \mid B_{K}C_{K}=-1.18113 \right\}$ has two connected components.}
	    \label{fig:LQG-J}
\end{figure*}

Though the  LQG problem has a nice property in terms of the connectivity of $\mathcal{K}$ (given in Equation \ref{eq:LQG-constraint}), its 
optimization landscape is otherwise more complicated that cases we saw earlier.  Figure~\ref{fig:LQG-J} shows two examples for the LQG cost $J(K)$.  Firstly, it is easy to see from the figure that LQG has non-unique and non-isolated global optima in the state-space form. This feature often adds difficulty in establishing convergence of policy optimization methods. Secondly, it is also straightforward to show that the LQG cost is non-coercive. One way to see this is to consider the similarity transformation using $\lambda I$. By letting $\lambda\rightarrow \infty$, we see that $|B_K|\rightarrow \infty$ but the cost remains  unchanged. Non-coerciveness makes it challenging to establish convergence even to stationary points. Lastly, 
LQG could have saddle points that are not optimal. Moreover, if $K$ is a stationary point, then all of its ``similar'' controllers $\mathcal{T}(T,K)$ are also stationary points for any nonsingular~$T$. Also if $K\in \mathcal{K}$ is  a non-minimal stationary point, its minimal reduction could generate an infinite number of more stationary points.  Consequently, it is nontrivial to find an optimal controller, or even certify an optimal controller, through policy gradient methods. Nevertheless, there is one clean case for the certification of the globally optimal points.   
\begin{theorem} \label{theo:stationary_points_globally_optimal_discrete}
 All {\it minimal} stationary points\footnote{Here ``minimal" refers to being both observable and controllable as a dynamical system.}  
  $K\in\mathcal{K}$ in the LQG problem in Equation~\ref{eq:LQG_cost_formulation_discrete} are globally optimal, and they are related to each other by a similarity transform. 
\end{theorem}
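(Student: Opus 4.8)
The plan is to pass through a convex reparameterization of the LQG problem and use it to show that any minimal stationary point of the nonconvex policy cost pulls back to a stationary point of a convex program, hence is globally optimal. First I would recall the classical change of variables (the one behind Theorem \ref{Theo:disconnectivity}) that maps a dynamic controller $K = \bsmtx 0 & C_K \\ B_K & A_K \esmtx$, together with the closed-loop Lyapunov certificate, to new variables living in a convex feasible set. Concretely, writing the closed-loop matrix as $\bsmtx A & BC_K \\ B_K C & A_K \esmtx$ and letting $X_K$ (or its inverse, partitioned appropriately) be the Gram matrix solving the Lyapunov equation in Equation \ref{eq:LQG_LyapunovX_discrete}, one introduces transformed variables $(\Theta, G)$ in which the LQG cost becomes a convex function $\widehat{J}$ on an open convex domain $\mathcal{C}$, and the map $\Phi: K \mapsto (\Theta, G)$ is a surjection onto $\mathcal{C}$ when restricted to minimal controllers. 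This is the standard Youla/LMI-type parameterization; I would cite the version used in \cite{zheng2021analysis}.

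Next I would carry out the chain-rule / implicit-function step. Restricting to the open set $\mathcal{K}_{\min} \subset \mathcal{K}$ of minimal controllers, the map $\Phi$ is smooth; moreover on each similarity-transformation orbit $\{\mathcal{T}(T,K)\}$ the value of $\Phi$ is constant, and conversely the fiber $\Phi^{-1}(\Theta,G) \cap \mathcal{K}_{\min}$ is exactly one such orbit (minimal realizations of a fixed transfer function differ only by similarity). Because $J = \widehat{J} \circ \Phi$, the chain rule gives $\nabla J(K) = D\Phi(K)^\tp \nabla \widehat{J}(\Phi(K))$. If $K$ is a stationary point of $J$, i.e. $\nabla J(K) = 0$, I need to conclude $\nabla \widehat J(\Phi(K)) = 0$; this requires that the differential $D\Phi(K)$, viewed as a map into the tangent space of $\mathcal{C}$, be surjective at minimal $K$ — equivalently that its adjoint is injective, so that $D\Phi(K)^\tp \eta = 0$ forces $\eta = 0$. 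Since $\widehat J$ is convex on the convex set $\mathcal{C}$, $\nabla \widehat J(\Phi(K)) = 0$ implies $\Phi(K)$ is a global minimizer of $\widehat J$, hence $J(K) = \widehat J(\Phi(K)) = \min_{\mathcal{C}} \widehat J = \min_{\mathcal{K}} J$, so $K$ is globally optimal. Finally, any two global minimizers are minimal stationary points mapping to the same minimizer of the strictly-relevant convex program (global optimum of the transfer function is unique by \cite[Theorem 14.7]{zhou1996robust}), so they have the same input-output behavior and, being minimal realizations, are related by a similarity transform — which gives the last clause of the theorem.

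The main obstacle I anticipate is precisely the surjectivity of $D\Phi(K)$ at a minimal controller (equivalently, injectivity of its adjoint). Stationarity of $J$ only says the gradient vanishes in the $K$-coordinates; transferring this to the convex coordinates is exactly where minimality must be used, and one must verify that the nontrivial kernel of $D\Phi(K)$ is entirely ``vertical'' — tangent to the similarity orbit — while $\widehat J$ only depends on the ``horizontal'' directions, so that $D\Phi(K)^\tp$ is injective on the relevant subspace. Making this rank computation rigorous (e.g. by an explicit formula for $D\Phi$ in terms of the controllability/observability Gramians of $(A_K, B_K, C_K)$, which are nonsingular exactly under minimality) is the technical heart of the argument; the convexity and chain-rule parts are then routine. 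I would also need a brief remark that $\mathcal{K}_{\min}$ is open and dense in $\mathcal{K}$ and that stationarity is a local condition, so working on $\mathcal{K}_{\min}$ loses nothing for the statement about minimal stationary points.
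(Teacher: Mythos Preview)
The paper itself does not prove this theorem; it is a survey and defers to \cite{zheng2021analysis}. The argument there is by \emph{direct computation}: one writes out $\nabla_{A_K}J,\ \nabla_{B_K}J,\ \nabla_{C_K}J$ explicitly as products involving the off-diagonal blocks of $X_K$ and $Y_K$ from Equation~\ref{eq:LQG_LyapunovX_discrete}. Minimality of $(A_K,B_K,C_K)$ is exactly what forces those off-diagonal blocks (the cross Gramians) to be nonsingular; one can then cancel them from the stationarity equations and what remains reduces, after algebra, to the optimal control and filtering Riccati equations. Thus a minimal stationary $K$ realizes the unique optimal LQG transfer function, and any two such realizations differ by a similarity transform.

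Your route through a convex reparameterization plus the chain rule is genuinely different and is in the spirit of Section~\ref{sec:convex_para_grad_dom} (and of the DCL argument for output estimation in \cite{umenberger2022globally}). The trade-off is this: the direct proof localizes all the work in a concrete matrix computation and uses minimality exactly once, to invert the cross Gramians. Your proof localizes the work in constructing $\Phi$ and verifying that $D\Phi(K)$ is surjective at minimal $K$; that rank condition is morally the same invertibility-from-minimality step, just packaged abstractly. One caution: for dynamic output feedback the Scherer-type change of variables does not give a clean equality $J=\widehat J\circ\Phi$ on all of $\mathcal{K}$ the way $K=LP^{-1}$ does for state feedback --- the coupling block in the partitioned Lyapunov matrix is invertible \emph{only} at minimal controllers, so $\Phi$ is not even defined off $\mathcal{K}_{\min}$, and you must also check that at each minimal $K$ the convex cost evaluated at $\Phi(K)$ equals $J(K)$ (not merely that the minima agree). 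These are exactly the points you flag as the ``technical heart,'' so your outline is sound, but be aware that carrying it out for LQG is more delicate than the state-feedback template in Theorem~\ref{thm:grad_dominance_param}, and the direct gradient computation in \cite{zheng2021analysis} is likely shorter.
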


The above results indicates that when running the policy gradient methods, if the iterates converge to a minimal stationary point, 
a globally optimal controller has been found. However, if the stationary points are non-minimal, one cannot say much about its optimality. Indeed, \cite{zheng2021analysis} provides examples showing the existence of \textit{saddle} points of LQG. Though there are recent developments in perturbed gradient methods that can be ensured to escape strict saddle points (saddle points with indefinite Hessian; i.e., escape direction exists in the second-order), there exist LQG instances with saddle points whose Hessian is degenerate. These observations pose challenges in analyzing the performance of policy gradient methods applied to LQG. 

The recent work \cite{zheng2022escaping} introduced a novel perturbed policy gradient (PGD) method that is capable of escaping various bad stationary points (including high-order saddles). Based on the specific structure of LQG, this paper uses a reparameterization procedure which converts the iterate from a high-order saddle to a strict saddle, from which standard randomly perturbed PGD can escape efficiently. It also characterizes the high-order saddles that can be escaped by the proposed algorithm; 
however, there is still a lack of an end-to-end theorem to characterize the iteration complexity of the algorithm. It remains an open question to analyze the performance of policy gradient methods on LQG by 1) establishing conditions under which the algorithms will converge to, at least, stationary points, 2) designing effective ways to escape non-optimal stationary points, at least saddle points, 3) characterizing the algorithm complexity of the designed algorithms, and 4) developing sample-based methods and analyzing the sample complexity. It is also worth mentioning that the global convergence of PO for a simpler estimation problem has been proved in~\cite{umenberger2022globally}. See Section \ref{sec:convex_para_grad_dom} for more discussions.

\subsection{Output Feedback and Structured Control}

We now shift our attention to another class of synthesis 
problems with partial observations, namely, output feedback and structured control. 
First, we like to point out that policy synthesis on partially available data (not necessary the underlying state) is of great interest in applications, particularly for large scale systems. For example, in decentralized control, stabilizing feedback with a particular sparsity pattern is desired; in the output feedback case, one aims to design a stabilizing policy that can be factored with its right multiplicand as the observation map. These problems
 can conveniently be formalized
in form of Equation~\ref{eq:opt}, 
where ${\cal K}$ becomes a subset of stabilizing (static or dynamic) feedback policies: for both output feedback and structured
synthesis, ${\cal K}$ is a linearly constrained subset of stabilizing feedback gains.
The 
PO perspective adopted in this survey then immediately offers an algorithm for 
these problems, namely, a projected first order update\footnote{The projection is used to enforce sparsity/structure patterns. The projection does not involve stability/robustness concerns.}. A natural 
question is whether such an intuitive generalization has any theoretical guarantees of convergence; the short answer, however, is negative. We now summarize some of our current understanding
of why this is the case, intermingled with some more encouraging results.

A major obstacle in 
guaranteeing  convergence to
the global optimum 
is due to the geometry of the 
corresponding set ${\cal K}$---
and not only its non-convexity inherited from the set of stablizing controllers. Rather,
due to the intricate geometry of this set, its intersection with linear subspaces
can result in disconnected sets; an analogous phenomena in the case of LQG was examined in the previous section. This is a known fact from classic control
in the context of output feedback and the method of root locus, where the (scalar)
feedback gain can undergo intervals of being stabilizing or not; an example is shown 
in Figure~\ref{fig:outputfeedback}(a).
 \begin{figure*}[!t]
\centering
\begin{tabular}{ccc}
\hspace{-145pt}	\includegraphics[width=0.28\textwidth]{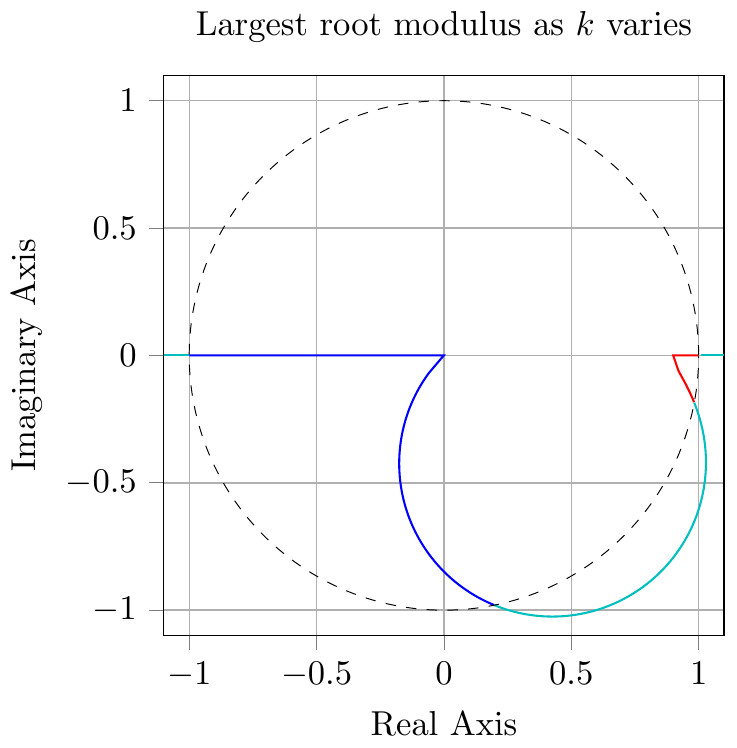} &
\hspace{-260pt} 
\includegraphics[width=0.4\textwidth]{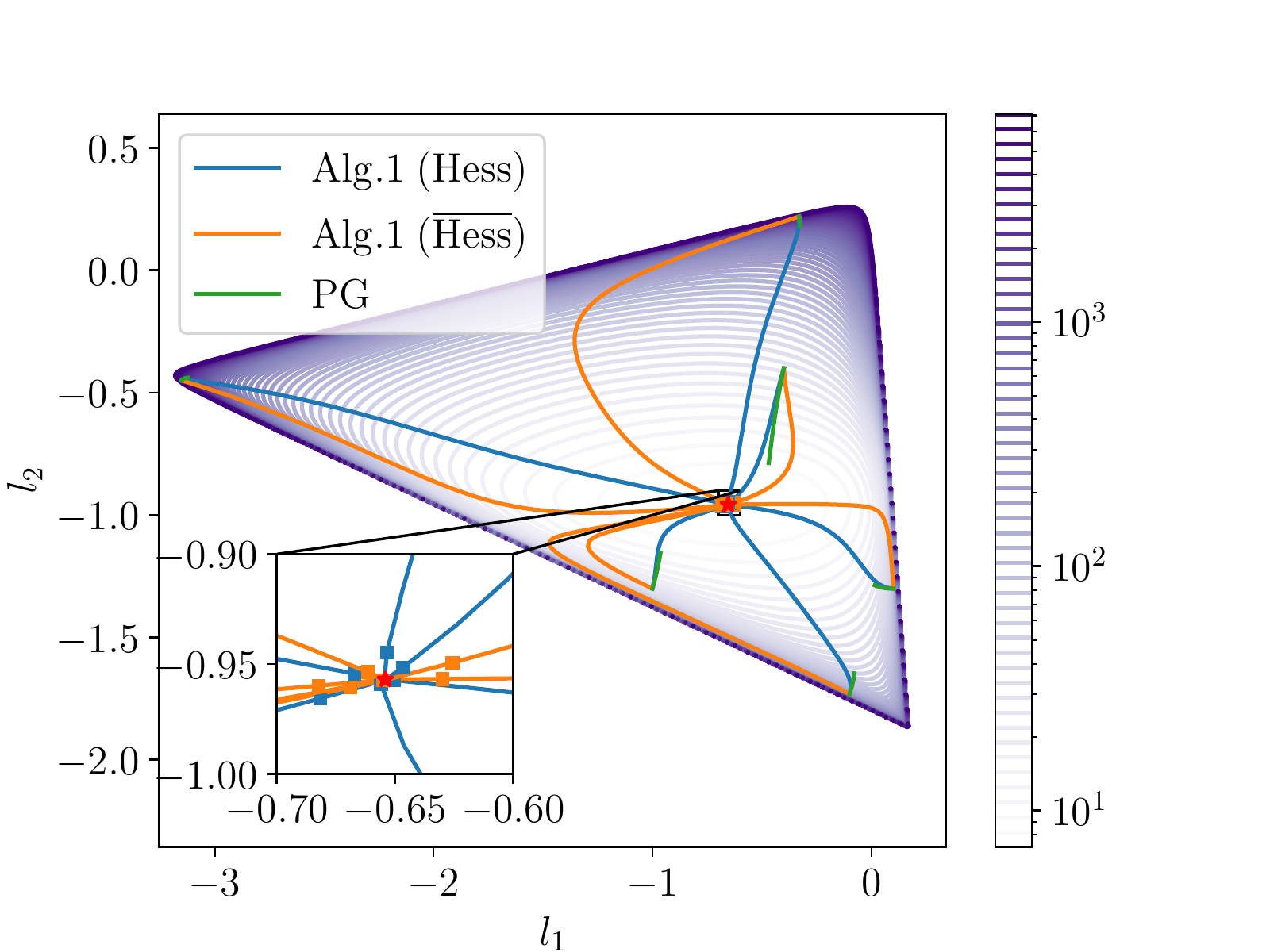} &  
\hspace{-265pt}\includegraphics[width=0.36\textwidth]{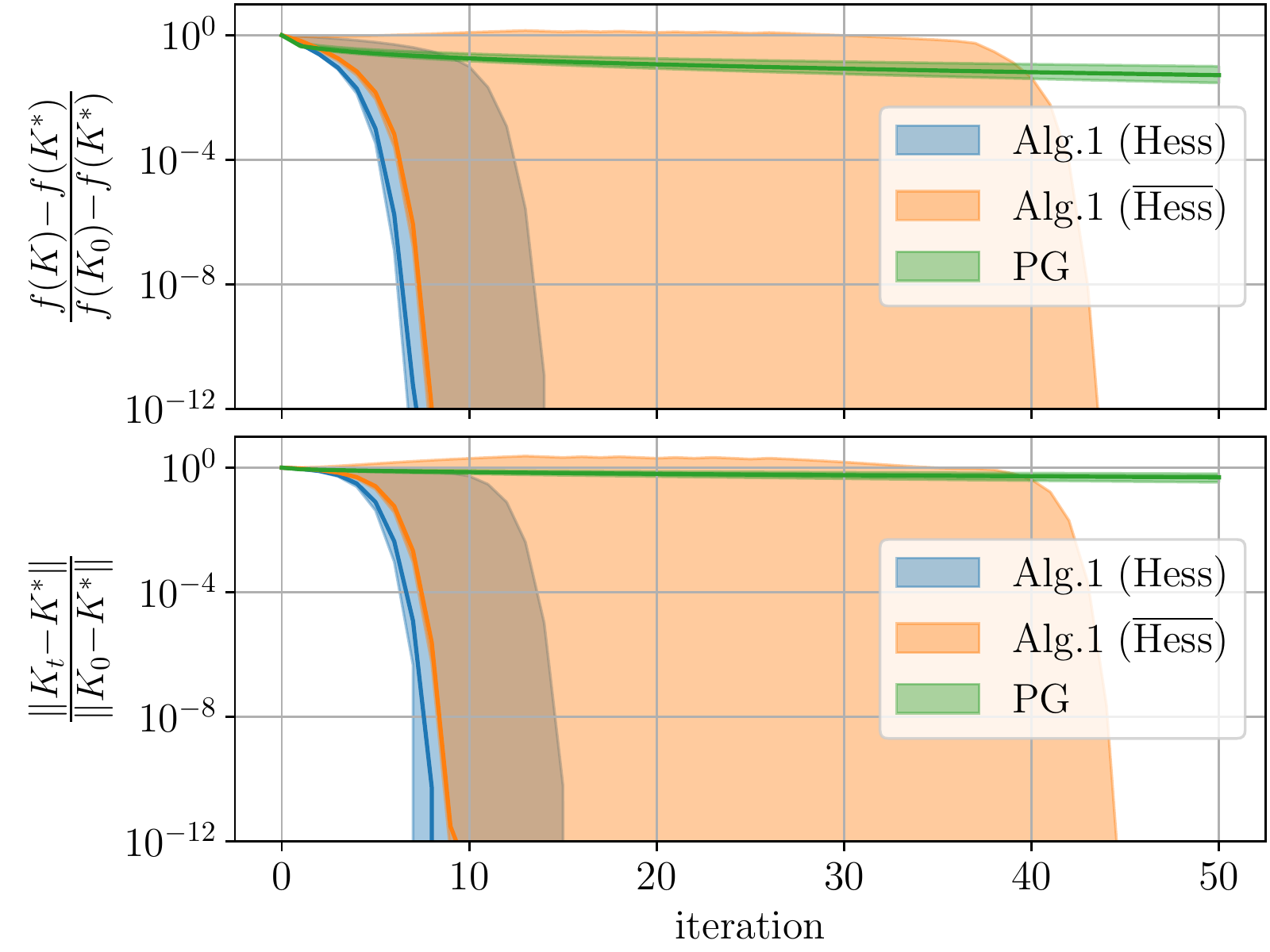}\\
\hspace{-130pt}(a) & \hspace{-275pt} (b) & \hspace{-255pt} (c)
\end{tabular}
\vspace{10pt}
\caption{(a) The largest modulus root of (discrete-time) feedback system with blue and the red segment corresponding to the stabilizing intervals~\cite{bu2019topological},  (b) The geometry of output feedback synthesis with respect to three different classes of algorithms (PG: projected gradient; ${\mbox{Hess}}$: scaled Newton update with Hessian using the  Riemmannian geometry of ${\cal K}$; $\overline{\mbox{Hess}}$: scaled Newton update with Hessian using the Euclidean  geometry of ${\cal K}$ ~\cite{Talebi2022-on}, (c) Performance of the aforementioned 
policy gradient descent for output feedback synthesis; similar results have been reported for structured synthesis in~\cite{Talebi2022-on}.}
	\label{fig:outputfeedback}
\end{figure*}
However, it is surprising that the number of such connected components even for SISO (single-input single-output) systems was not explicitly characterized until recently. A policy optimization perspective on control synthesis only makes this observation more compelling.
\begin{theorem}
The set of stabilizing output feedback gains for an $n$-dimensional SISO system, when nonempty, has at most $\lceil{n/2}\rceil$ connected components.
\end{theorem}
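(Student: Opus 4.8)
The plan is to reduce the statement to a counting fact about how a line can meet the set of stable polynomials. Write the SISO plant as $G(s)=b(s)/a(s)$ with $a$ monic of degree $n$ and $\deg b\le n-1$; closing the loop with static output feedback $u=-ky$ perturbs the dynamics by the rank-one matrix $BkC$, so by the matrix determinant lemma the closed-loop characteristic polynomial is $p_k(s)=a(s)+k\,b(s)$, whose coefficients are \emph{affine} in $k$. Thus $k\mapsto p_k$ is an affine line $L$ in the space of monic degree-$n$ polynomials, and the (nonempty, by hypothesis) stabilizing set $\mathcal{K}$ is the preimage of the Hurwitz region under $L$: an open subset of $\mathbb{R}$ whose connected components are open intervals. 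The endpoints of these intervals are \emph{critical gains}, i.e. values of $k$ for which $p_k$ has a root on the imaginary axis; between consecutive critical gains the number $\nu(k)$ of right-half-plane roots of $p_k$ is constant, and $\mathcal{K}=\nu^{-1}(0)$.

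The first step is to bound the number of critical gains. A crossing at $s=i\omega$ with $\omega\neq0$ forces $k=-a(i\omega)/b(i\omega)$ to be real, equivalently $\mathrm{Im}\big(a(i\omega)\overline{b(i\omega)}\big)=0$; this function is odd in $\omega$, so it equals $\omega\,q(\omega^{2})$ for a real polynomial $q$ with $\deg q\le n-1$. Hence there are at most $n-1$ admissible nonzero crossing frequencies, namely the positive roots of $q$, together with at most one critical gain coming from a root at $s=0$ (namely $k=-a(0)/b(0)$): at most $n$ critical gains in all. A short root-locus estimate then shows that at least one of the two unbounded gain intervals is non-stabilizing: when $k$ times the leading coefficient of $b$ is negative and $|k|$ is large, one closed-loop root escapes to $+\infty$ along the positive real axis, so $p_k$ is unstable there.

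The heart of the argument is a two-for-one accounting that converts the bound of $n$ critical gains into $\lceil n/2\rceil$ components. Two consecutive components of $\mathcal{K}$ must be separated either by a gain interval on which $\nu\ge 2$ — whose two endpoints are transversal imaginary-axis crossings, i.e. two positive roots of $q$ — or by a single tangential crossing, where a conjugate pair touches the imaginary axis and returns without $\nu$ changing, which forces a double root of $q$. In either case each gap between consecutive components consumes at least two units from the budget of at most $n$ crossings. Combined with the one unit consumed before the first (or after the last) component by the unbounded-interval estimate, one gets $2(c-1)+1\le n$ for the number $c$ of components, hence $c\le\lceil n/2\rceil$. (The $s=0$ crossing changes $\nu$ by an odd amount and can occur only once, which is precisely what rules out a pathological alternation $\nu=0,1,0,1,\dots$.)

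The main obstacle is making this accounting rigorous for arbitrary — not merely generic — plants: one must control critical gains at which several roots hit the axis at once, tangential (even-order) contacts where $\nu$ is unchanged, and coalescences of closed-loop roots, and verify in each case that the local order of contact contributes the correct amount to the budget. I expect the cleanest route is the Hermite--Biehler interlacing characterization of Hurwitz polynomials: writing $p_k(i\omega)$ as an even part plus $i$ times an odd part — both affine in $k$ — each loss of stability is the collision of two interlacing families of real root-curves, and the rigidity of interlacing is what encodes the factor $1/2$. The discrete-time SISO case (Schur stability, unit-circle crossings) follows by the same scheme with the stability boundary changed.
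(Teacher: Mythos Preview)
The paper does not supply its own proof of this theorem; it defers entirely to the cited reference \cite{bu2019topological} for the argument and its continuous-time analogue. So there is no in-paper proof to compare against, and your proposal must stand on its own.

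Your reduction is correct: the closed-loop characteristic polynomial is affine in $k$, the stabilizing set is a disjoint union of open intervals, the degree bound $\deg q\le n-1$ together with the single possible $s=0$ crossing gives a budget of at most $n$, and your root-locus estimate that one unbounded gain interval is unstable is also right. For the two-for-one step, your intuition that a tangential crossing forces a double root of $q$ is in fact correct in the smooth case: if the root locus is a regular curve tangent to the imaginary axis at $i\omega_0$, then the gradient of $\mathrm{Im}\big(a(s)/b(s)\big)$ at $i\omega_0$ is orthogonal to $i\mathbb{R}$, so the restriction $h(\omega)=\mathrm{Im}\big(a(i\omega)\overline{b(i\omega)}\big)$ satisfies $h'(\omega_0)=0$, which indeed yields a double root of $q$.

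What is genuinely missing is exactly what you flag as the main obstacle, and it is not merely cosmetic. Your accounting silently switches between counting critical gains (values of $k$) and counting roots of $q$ (values of $\omega$); since several $\omega$'s can collapse to the same $k$ and a single critical gain can carry multiple axis roots, you must fix one budget --- roots of $q$ with multiplicity is the natural one --- and track it consistently. The degenerate cases (singular points of the root locus, several branches meeting the axis at the same $k$, higher-order contacts, and the interaction with the lone $s=0$ crossing) are precisely where a naive count can leak, and your sketch does not yet control them. Your proposed Hermite--Biehler route is the right way to close this: writing $p_k(i\omega)$ as an even part plus $i\omega$ times an odd part gives two real polynomial families, each affine in $k$, whose interlacing characterizes stability; the combinatorics of how interlacing is gained and lost as $k$ varies yields $\lceil n/2\rceil$ directly and absorbs the degenerate cases uniformly. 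Finally, since the paper's primary setting is discrete time (Schur stability), you should actually carry out the unit-circle version via the Schur--Cohn or Chebyshev interlacing criterion rather than dismiss it as ``the same scheme.''
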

The proof of this result, as well as its analogue for continuous time case, and 
precise characterization of these intervals can be found in~\cite{bu2019topological}. 
Less is known about the number of connected components for MIMO (multi-input multi-output) output feedback, knowledge of which could be useful for initializing PO algorithms. Nevertheless, some topological characterizations of these sets, including sufficient conditions for connectedness of structured stabilizing gains have been obtained in the literature~\cite{bu2019topological,feng2019exponential,ding2019aggressive}.

The above topological insights are important in the context of policy search updates when they are required to stay stabilizing. As such, for general structured (including output feedback) synthesis, it is judicious to examine instead convergence to local optima or stationary points. The first observation in this direction has been made in~\cite{bu2019lqr}, where it was shown that projected gradient descent has a sublinear convergence to the (first-order) stationary point of structured LQR synthesis; analogous results for output
feedback synthesis have also been reported in~\cite{fatkhullin2020optimizing}.
Sample-based learning methods for reaching a first-order stationary point using zeroth-order methods for structured synthesis were examined in~\cite{li2021distributed}.
The sublinear convergence to a stationary point--the moment we impose a linear constraint on the set of stabilizing feedback gains--
only hints at the fact that we are not fully utilizing the underlying geometry of the feedback synthesis problem. 
These observations have motivated a new line of work on structured synthesis, that is also inline with the natural gradient iteration and quasi-Newton method for LQR presented earlier. 
The key missing insight pertains of course to the Hessian and the Riemannian geometry of
the set of stabilizing feedback gains as well as their linearly constrained subsets. As it turns out, PO for LQR is closely related to iterative
approaches for solving the Riccati equation that have subsequently
been adopted for data-driven setups. In particular, 
it can be shown that what is known as Hewer algorithm
for LQR~\cite{hewer1971iterative}, is really a realization 
of a quasi-Netwon update for a particular choice of a stepsize.
In our desire to understand the fundamental limitation
of PO for control synthesis, it is thus relevant to
characterize the {\em Newton update} 
on the set of stabilizing feedback gains as well as its linearly constrained subset.
This more geometric
question is still relevant in the context of first order methods,
as it provides fundamental insights into how to recover a linear, or
even quadratic rate of guaranteed convergence to stationary or even locally optimal points for policy optimization methods on structured synthesis. These topics have been thoroughly analyzed
in~\cite{Talebi2022-on}, including ``proper'' construction of the Hessian for
the LQR problem through the intrinsic Riemannian
connection,
how to extend this Hessian to linearly constrained
subsets ${\cal K}$, and how to choose the stepsize in the corresponding
iterates to remain stabilizing. The upshot of this analysis are results such as the following.
\begin{theorem}
Suppose that $K^*$ is a nondegenerate local minimum of the LQR on the
linear constrained subset ${\cal K}$. Then there
is a neighborhood around $K^*$ and a positive scaling, for which the scaled
Riemannian Netwon policy update remains stabilizing and converges
to $K^*$ at a linear--and eventually--at a quadratic rate.
\end{theorem}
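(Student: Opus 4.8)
The plan is to run the classical local convergence analysis of a damped Newton iteration, but carried out intrinsically on the open constraint set $\mathcal{K}$ with the Riemannian metric under which the LQR cost has a well-behaved gradient and Hessian, and to lean on coerciveness of $J$ (Lemma~\ref{lemma:coercive}) to certify that the damped iterates never leave $\mathcal{K}$. First I would pin down the geometric objects. Because the structure constraint is linear, the tangent space at every feasible point is one fixed subspace $\mathcal{S}$, and a retraction is simply $K\mapsto K+\eta$ with $\eta\in\mathcal{S}$ (legitimate since $\mathcal{K}$ is relatively open in the affine constraint set). Write $g(K)\in\mathcal{S}$ and $H(K)\colon\mathcal{S}\to\mathcal{S}$ for the Riemannian gradient and Hessian of $J$ at $K$; since the LQR cost is real analytic on $\mathcal{K}$ (Lemma~\ref{lemma:coercive}, item~1), $g$ and $H$ are real analytic, hence locally Lipschitz. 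Nondegeneracy of $K^*$ means $g(K^*)=0$ and $H(K^*)\succ 0$ on $\mathcal{S}$; its immediate consequences, which I would record first, are that $H(K)$ is uniformly invertible and positive definite on some closed ball $\bar{\mathbb{B}}_r(K^*)\subset\mathcal{K}$, and that the scaled Newton direction $-\alpha H(K)^{-1}g(K)$ is a genuine descent direction there.

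Second, feasibility. By coerciveness of $J$ (Lemma~\ref{lemma:coercive}, item~2, together with item~1 of Theorem~\ref{theorem:main_technical_thm}), the sublevel set $\mathcal{K}_\gamma$ for $\gamma$ slightly above $J(K^*)$ is compact, so $\dist(\mathcal{K}_\gamma,\mathcal{K}^{c})=:\delta>0$. Shrink $r$ so that $\bar{\mathbb{B}}_r(K^*)\subset\mathcal{K}_\gamma$, then choose the scaling $\alpha>0$ small enough that on $\bar{\mathbb{B}}_r(K^*)$ two things hold at once: (i) the $L$-smoothness inequality of Theorem~\ref{theorem:main_technical_thm} evaluated along the Newton direction gives $J\!\left(K-\alpha H(K)^{-1}g(K)\right)<J(K)$, so the next iterate again lies in $\mathcal{K}_\gamma$; and (ii) the step length $\alpha\|H(K)^{-1}g(K)\|$ is below $0.9\,\delta$, so the entire segment to the next iterate stays in $\mathcal{K}$. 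An induction in the spirit of the proof of item~3 of Theorem~\ref{theorem:main_technical_thm} then yields $K^n\in\mathcal{K}_\gamma\subset\mathcal{K}$, i.e.\ $\rho(A-BK^n)<1$, for all $n$, and keeps the whole trajectory inside the compact ball around $K^*$.

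Third, the rates. In linear coordinates on $\mathcal{S}$ the scaled Newton map is $\Phi(K)=K-\alpha H(K)^{-1}g(K)$, with $\Phi(K^*)=K^*$ and derivative $D\Phi(K^*)=I-\alpha H(K^*)^{-1}H(K^*)=(1-\alpha)I$ (the cross term drops because $g(K^*)=0$); hence $\|D\Phi(K^*)\|=|1-\alpha|<1$ for $\alpha\in(0,1]$, and by continuity $\Phi$ is a strict contraction on a perhaps smaller ball, so $\|K^{n+1}-K^*\|\le(|1-\alpha|+o(1))\|K^n-K^*\|$, i.e.\ linear convergence. For the eventual quadratic rate I would note that the slack in estimates (i)--(ii) is of order $\|K^n-K^*\|^2$ once the iterate is close, which no longer competes with the fixed boundary gap $\delta$, so the scaling may be raised to the full Newton value $\alpha=1$ in a small enough neighborhood; there the standard Newton estimate, using local Lipschitzness of $H$ (from analyticity) and boundedness of $H(K^*)^{-1}$, gives $\|K^{n+1}-K^*\|\le c\,\|K^n-K^*\|^2$. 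Concatenating the damped phase with the full-step phase delivers the claimed ``linear---and eventually---quadratic'' behavior.

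The step I expect to be the real obstacle is not the Newton fixed-point bookkeeping but the correct specification of the Riemannian Hessian on the linearly constrained subset: the naive Euclidean Hessian of $J$ restricted to $\mathcal{S}$ is \emph{not} the intrinsic Hessian, because the metric on $\mathcal{K}$ is non-Euclidean, so both ``nondegenerate'' and ``Newton step'' must be read through the right affine/Riemannian connection (precisely the construction of~\cite{Talebi2022-on}), and one must check that the attendant Christoffel/curvature terms are smooth and bounded near $K^*$ for the contraction and quadratic-error estimates to close. A secondary subtlety is that the linear constraint may carve the stabilizing set into several disconnected pieces; but the small-ball, compact-sublevel-set argument above automatically localizes everything to the connected component containing~$K^*$.
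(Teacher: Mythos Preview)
The paper does not actually prove this theorem; it is stated as a summary of results from \cite{Talebi2022-on}, to which the reader is referred for the construction of the intrinsic Riemannian Hessian on the stabilizing set, its restriction to linearly constrained subsets, and the stepsize selection that keeps the iterates stabilizing. There is therefore no in-paper argument to compare against directly.

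That said, your sketch follows precisely the template one expects from such a reference: local uniform positivity and invertibility of the Hessian from nondegeneracy; feasibility via coerciveness (Lemma~\ref{lemma:coercive}) plus damping, argued by the same compact-sublevel-set induction as in the proof of Theorem~\ref{theorem:main_technical_thm}; the linear rate from the contraction $D\Phi(K^*)=(1-\alpha)I$; and the eventual quadratic rate from the full Newton step together with local Lipschitzness of the Hessian. Your linearization $D\Phi(K^*)=(1-\alpha)I$ is legitimate despite the non-Euclidean metric, because the connection correction in the covariant derivative of the Riemannian gradient is multiplied by $g(K^*)=0$ at the critical point, so the Euclidean Jacobian of $g$ coincides with the Riemannian Hessian there. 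You have also correctly located where the substantive work lies: specifying the Riemannian connection on the stabilizing set (the one underlying the natural-gradient and Gauss--Newton updates in \S\ref{sec:LQR_results}) and checking that the resulting Hessian, restricted to the linear subspace, is smooth and nondegenerate near $K^*$. The paper explicitly distinguishes this intrinsic Hessian from the naive Euclidean one (``Hess'' versus ``$\overline{\mbox{Hess}}$'' in Figure~\ref{fig:outputfeedback}) and defers all details to \cite{Talebi2022-on}.

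One small point to tighten: the theorem as stated speaks of \emph{a} positive scaling, whereas your argument switches from a damped $\alpha<1$ to the full $\alpha=1$. You should either make explicit that on a sufficiently small neighborhood the choice $\alpha=1$ already yields both feasibility and the quadratic bound (which trivially implies linear), or state clearly that you are reading the theorem as permitting a two-phase scheme.
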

A representative scenario is shown in Figure~\ref{fig:outputfeedback}(c) for the output feedback
problem, where ``$\mbox{Hess}$'' and ``$\overline{\mbox{Hess}}$'' refer to the Newton steps using the Hessian obtained from the Riemannian and Euclidean connections,
respectively, and the PG refers to the projected gradient descent algorithm. Ensuring stability during the course of these iterates, particular to control synthesis, often makes the analysis of these algorithms more intricate.

\section{The Role of Convex Parameterization}\label{sec:convex_para_grad_dom}  

There is a large body of literature on the reparameterization of various control problems 
to represent them as convex problems 
\cite{befb94,Scherer2004}. In this section, we discuss the connections between such convex approaches and PO, showing LMI formulations for control design lead to desired landsacpe properties for PO. 

We have seen successful application of PO to a range of control problems in the previous sections, in many cases achieving the \emph{globally optimal} policy. A natural question is whether there is a unified approach to determining when stationary points for PO are global minima.
In this section, we revisit the gradient dominance property given in Definition~\ref{def_coer},
and 
provide a unified framework to show some related inequality holds for a large family of PO problems, despite the nonconvexity of the cost $J(K)$ as a function of $K$. 
This viewpoint gives insights into the ``mysterious" emergence of gradient dominance (or the PL property) in various control problems that are nonconvex in $K$, providing a general tool to determine when stationary points for nonconvex PO problems are actually global minimum.

Intuitively, the gradient dominance property implies that $J(K)$ is close to the optimal value for any $K$ with small gradient norm, from which one can directly conclude nice optimization landscape/convergence properties\footnote{Convergence rates will depend on the values of the degree $p$, see \cite{LiPong2018KLexponent} for more properties. In particular $p=1$ gives a sublinear convergence rate, and $p=2$ gives a linear rate. }.
Our goal in this section is to show how to use the existence of convex parameterizations, together with important additional assumptions on the \emph{map} between the variables in the nonconvex and convex problems, to conclude such a desired property or some closely-related variant for the nonconvex $J(K)$.

We begin by considering an abstract description 
of the following pair of problems 
\begin{align}\label{f_K}
\min_{K}&\quad J(K),\quad
\mbox{s.t.}\ K\in\mathcal{K},
\end{align}
\begin{align}\label{lmi}
\min_{L,P,Z}&\quad f(L,P,Z),\quad 
\mbox{s.t.}\quad (L,P,Z)\in\mathcal{S}, 
\end{align}
where $\mathcal{K}$ describes the set of desired controllers (typically, stablizing set), and $\mathcal{S}$ captures the appropriate  constraint sets (typically LMIs), which are determined for each problem case; see examples below.
The following key assumption on the pair of problems in Equation \ref{f_K} and Equation \ref{lmi} is critical for Theorem \ref{thm:grad_dominance_param}.

\begin{assumption}\label{assump:2}
The feasible set $\mathcal{S}$ is a convex set, and 
the function $f(L,P,Z)$ is convex, bounded, and differentiable over $\mathcal{S}$. Any feasible point $(L,P,Z)\in \mathcal{S}$ is assumed to satisfy  $P\succ 0$. 
In addition, assume for all $K\in \mathcal{K}$, we can express $J(K)$ as follows\footnote{Note that this assumption needs to hold for all feasible points in the two domains, not only at the optima.},
\begin{align*}
        J(K) = &\min_{L,P,Z}\ f(L,P,Z) \\
        &\mbox{s.t.}\ 
          (L,P,Z) \in \mathcal{S},\  LP^{-1} = K.
    \end{align*}
\end{assumption} 
Recall that $J'(K,V)$ denotes the directional derivative of $J(K)$ along the direction $V$. 
When $J$ is differentiable, it holds that  $J'(K,V)=\trace(V^\tp\nabla  J(K))$.
We have the following result (modified from \cite{sun2021CDC} to also allow non-differentiable points).

\begin{theorem}\label{thm:grad_dominance_param}
Consider the problem pair in Equation \ref{f_K} and Equation \ref{lmi}. Suppose Assumption \ref{assump:2} holds, and $J(K)$ is either differentiable or subdifferentially regular in $K$.
Let $K^*$ denote a global minimizer of $J(K)$ in $\mathcal{K}$. For any $K$ satisfying $J(K)>J(K^*)$, there exists non-zero $V$ in the descent cone of $\mathcal{K}$ at $K$, such that the following inequality holds
        \begin{align}
            0< J(K) - J(K^*) \leq - J'(K,V).\label{eq:conv lin}
        \end{align}
Consequently, any stationary points of $J$ will be global minimum.
\end{theorem}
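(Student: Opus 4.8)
The plan is to exploit Assumption~\ref{assump:2}, which writes $J(K)$ as the value of a convex program in the lifted variables $(L,P,Z)$ subject to the linear coupling $LP^{-1}=K$. First I would fix an arbitrary feasible $K$ with $J(K)>J(K^*)$, and let $(L,P,Z)$ attain the minimum in the inner problem defining $J(K)$, so that $LP^{-1}=K$ and $f(L,P,Z)=J(K)$. Similarly, let $(L^*,P^*,Z^*)\in\mathcal{S}$ attain $J(K^*)$ with $L^*(P^*)^{-1}=K^*$. Because $\mathcal{S}$ is convex and $f$ is convex and differentiable over $\mathcal{S}$, the segment $(L_t,P_t,Z_t):=(1-t)(L,P,Z)+t(L^*,P^*,Z^*)$ stays in $\mathcal{S}$ for $t\in[0,1]$, and convexity of $f$ gives the standard bound $f(L_t,P_t,Z_t)-f(L,P,Z)\le t\,(f(L^*,P^*,Z^*)-f(L,P,Z)) = -t\,(J(K)-J(K^*))$.

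Next I would transport this decrease back to the $K$-space. Define $K_t := L_t P_t^{-1}$; since $P\succ 0$ on all of $\mathcal{S}$, $P_t$ is invertible for $t$ small (indeed on all of $[0,1]$ by the convexity of the PSD cone and $P_t\succeq \min\{\lambda_{\min}(P),\lambda_{\min}(P^*)\}I\succ0$), so $K_t$ is a well-defined smooth curve with $K_0=K$. Let $V:=\frac{d}{dt}\big|_{t=0}K_t$, which by the product/quotient rule equals $(L^*-L)P^{-1} - L P^{-1}(P^*-P)P^{-1}$; this $V$ lies in the descent cone of $\mathcal{K}$ at $K$ because each $K_t$ is a feasible controller (the inner problem has nonempty feasible set at $K_t$ since $(L_t,P_t,Z_t)$ witnesses it, hence $K_t\in\mathcal{K}$). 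By Assumption~\ref{assump:2}, $J(K_t)\le f(L_t,P_t,Z_t)$ for every $t$, with equality at $t=0$. Therefore
\begin{align}
J'(K,V) = \lim_{t\searrow 0}\frac{J(K_t)-J(K)}{t} \le \lim_{t\searrow 0}\frac{f(L_t,P_t,Z_t)-f(L,P,Z)}{t} \le -(J(K)-J(K^*)),\notag
\end{align}
which is exactly Equation~\ref{eq:conv lin}. In the differentiable case the first limit is $\operatorname{trace}(V^\tp\nabla J(K))$; in the subdifferentially regular case one uses that the ordinary directional derivative exists and the composite $t\mapsto J(K_t)$ is right-differentiable with right derivative $J'(K,V)$, while $J(K_t)-J(K)\le f(L_t,P_t,Z_t)-f(L,P,Z)$ for all small $t$ forces the inequality on the derivatives.

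Finally, the "consequently" clause: if $K$ were a stationary point with $J(K)>J(K^*)$, then for \emph{every} feasible direction $V$ we would need $J'(K,V)\ge 0$ (in the differentiable case $\nabla J(K)=0$; in the subdifferentially regular case $0\in\partial_C J(K)$ gives $J'(K,V)=J^\circ(K,V)\ge \operatorname{trace}(V^\tp\cdot 0)=0$ via the support-function description of $\partial_C J$), contradicting the strict negativity $J'(K,V)\le -(J(K)-J(K^*))<0$ just established. Hence $J(K)=J(K^*)$ and $K$ is globally optimal.

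The main obstacle I anticipate is the careful handling of the curve $K_t=L_tP_t^{-1}$ and its derivative at $t=0$: one must confirm that $P_t$ stays invertible along the whole segment (using $P\succ0$ on $\mathcal{S}$ and convexity of the PSD cone), that $V\ne 0$ whenever $K\ne K^*$ — or more precisely that the constructed $V$ genuinely produces descent, which could fail only if $K_t\equiv K$, forcing $f(L_t,\cdot,\cdot)$ constant and hence $J(K)=J(K^*)$, a contradiction — and that the one-sided chain-rule inequality for the directional derivative of the composition is valid at a possibly non-differentiable point of $J$, which is where subdifferential regularity (ensuring the ordinary directional derivative exists and is well-behaved) does the essential work.
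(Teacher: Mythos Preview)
Your proposal is correct and follows essentially the same approach as the paper: lift $K$ to a minimizer $(L,P,Z)$ in the convex domain, move toward the optimal $(L^*,P^*,Z^*)$ along a straight segment, and pull the resulting descent back to the policy space via the map $K_t=L_tP_t^{-1}$, which is exactly the construction sketched in the paper's proof illustration (Figure~\ref{fig:1}) and the cited reference. Your handling of the nonsmooth case via local Lipschitzness (so that $J(K_t)$ and $J(K+tV)$ differ by $o(t)$, making the curve-derivative coincide with $J'(K,V)$) and your observation that $V=0$ would force the contradiction $0\le-(J(K)-J(K^*))<0$ are the right refinements; the one loose phrase is that $V=0$ does not literally imply $K_t\equiv K$, but as you note the Lipschitz argument already closes that gap.
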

The above theorem gives a unified sufficient condition ensuring that stationary points of nonconvex PO problems are global minimum. 
For special problems such as LQR, it is possible to further bound $\norm{V}_F$ and apply $\|\nabla J(K)\|_F \ge \left|\nabla J(K)[\frac{V}{\|V\|_F}]\right|$ to show the gradient dominance property with degree $p=1$ for Equation \ref{eq:grad_dominance}.
Now suppose the convex parameterization requires a set of  parameters $P_1,\ldots,P_m$; it is possible to develop
a more general version of Theorem  \ref{thm:grad_dominance_param}, following the ideas in 
\cite{sun2021CDC},
that allows a map $K=\Phi(P_1,\ldots,P_m)$ as long as $\Phi$ has nicely-behaved first-order derivatives. A more involved version of similar  constructions has recently been proposed as a general framework of ``differentiable convex lifting" (DCL) in \cite{umenberger2022globally}. 
\begin{figure}[!ht]
	\centering 
	\begin{tabular}{c}
		\includegraphics[width=0.6\textwidth]{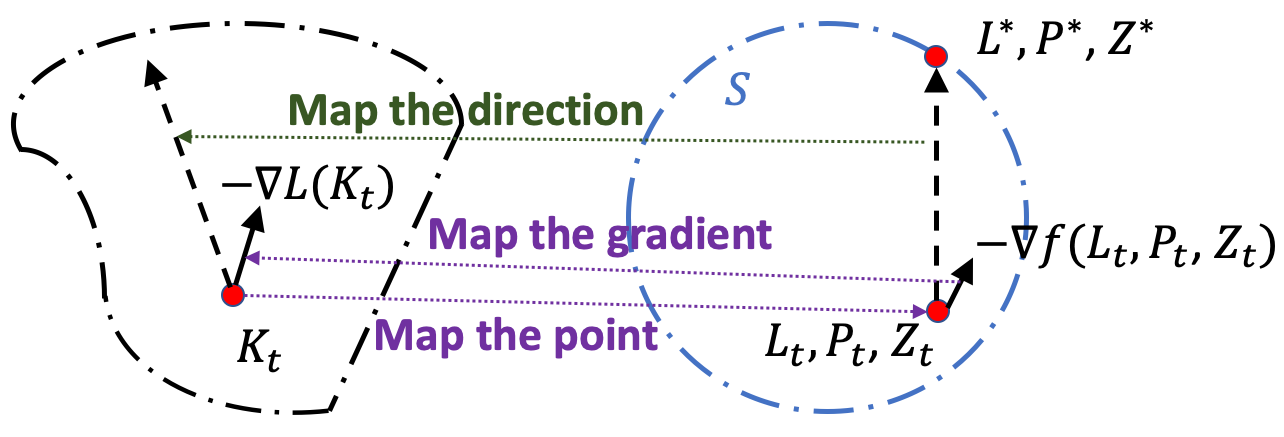}	
\end{tabular}
	\vspace{5pt}
	\caption{Proof illustration: 
 We can map any policy $K$ to $(L, P, Z)$, then map the direction $(L^*, P^*, Z^*) - (L, P, Z)$ and the gradient $\nabla f(L, P, Z)$ back to the original $K$ space. Since the problem in $(L, P, Z)$ space is convex, $\langle \nabla f(L, P, Z), (L^*, P^*, Z^*) - (L, P, Z)\rangle \leq 0$. We then show that a similar relation  holds for the nonconvex problem. 
	}
	\label{fig:1}
\end{figure} 

\noindent\textbf{Example 1:  Discrete-time infinite-horizon LQR.} 
Consider minimizing the LQR cost $J(K)$ in Equation \ref{eq:averagecost} 
subject to $K\in\mathcal{K}$, the set of all stabilizing controllers. 
This problem has the well-known convex parameterization,
\begin{subequations}\label{eq:cont_lqr_lmi}
\begin{align}
    \min_{L,P,Z}\ & f(L,P,Z):= \trace(QP) + \trace(ZR)\\
    \mbox{subject to}\ &  P\succ 0, \quad \begin{bmatrix} Z & L\\L^\top & P \end{bmatrix}\succeq 0, \quad \begin{bmatrix} P-\Sigma & AP+BL\\
    (AP+BL)^T & P
    \end{bmatrix}\succeq 0.
\end{align}
\end{subequations}
To check Assumption \ref{assump:2}, we add the constraint $K=LP^{-1}$ or $KP=L$ to problem \ref{eq:cont_lqr_lmi} and simplify the LMIs to conclude the minimum value of this problem equals $J(K)$ for all feasible $K$ \cite{sun2021CDC}. Thus Theorem \ref{thm:grad_dominance_param} applies, and can be tailored to show the gradient dominance property for the corresponding nonconvex cost $J(K)$, which allows us to recover the results seen earlier in Section \ref{sec:LQR_results} (on convergence to global minimizer) for PO in LQR. The constant in the gradient dominance inequality, Equation \ref{eq:grad_dominance}, can also be bounded in terms of problem matrices, the initial policy cost $J(K^0)$, and a notion of problem ``condition number''; see \cite{pmlr-v80-fazel18a, sun2021CDC}, and \cite{mohammadi2019global,bu2020policy} for the continuous-time counterpart. 
In fact \cite{mohammadi2019global} was the first to leverage existing LMI conditions to study the global convergence properties of PO methods, specifically for continuous time LQR. 

\vspace{5pt}\noindent\textbf{Example 2: $\mathcal{H}_\infty$ state-feedback synthesis.} As described in Section~\ref{sec:formulation},  this problem can be formulated as PO with $J(K)$ given by Equation \ref{eq:hinfcost} and $\mathcal{K}$ given by Equation \ref{equ:def_stabilizing_K}. Under the positive definite assumption on $(Q,R)$, the non-strict version of the bounded real lemma can be used to show that this problem has the following convex parameterization,
\begin{subequations}\label{eq:cont_hinf_lmi}
\begin{align*}
    \min_{L,P,\gamma}\ & f(L,P,\gamma):= \gamma\\
    \mbox{subject to}\ &  P\succ 0, \quad \bmat{ -P  &0 &P &(AP-BL)^\tp &L^\tp \\
0 &-\gamma I &0 &I &0 \\
P &0 &-\gamma Q^{-1} &0 &0 \\
AP-BL &I  &0 &-P &0\\
L &0 &0 &0 &-\gamma R^{-1}}  \preceq 0,
\end{align*}
\end{subequations}
where $\gamma$ is the closed-loop $\mathcal{H}_\infty$ norm. 
Then based on Theorem \ref{thm:grad_dominance_param} and the subdifferentially regular property of the $\mathcal{H}_\infty$ cost function, we can immediately conclude that any Clarke stationary points for this problem are global minimum \cite{Guo2022hinf}. Then, with the help of coerciveness, Goldstein's subgradient method can be guaranteed to find global minimum of this problem.

\vspace{5pt}\noindent\textbf{Example 3:  Output estimation problem with regularization.} For  lack of  coerciveness of the objective in many other control problems beyond LQR, some explicit {\it regularization} techniques can help ensure global convergence of PO methods (different from implicit regularization discussed in  \S\ref{sec:robust_control_results}).
Specifically, the recent work  \cite{umenberger2022globally} studied the continuous-time output estimation (OE), i.e., the {\it filtering} problem, a fundamental subroutine for LQG and partially observable control, and established the global convergence of PG method by regularizing the OE objective. Intriguingly, the regularization is inspired by the convex reformulation of the OE problem \citep{scherer97lmi}, and ensures that the convex {\it reformulation map}, i.e., the map  from the domain of the convex parameters to that of the policy parameter, is {\it surjective}. This way, the convex parameterization provably leads to a gradient dominance property in the policy parameter domain, of degree $1$. Hence, PO with re-balancing on the regularized OE objective is shown to enjoy a sublinear  convergence rate to the global optimum. This example not only reinforces the power of convex parameterization and its connection to gradient dominance, but also calls for more attention to ensure the non-degeneracy of the reformulation map, to actually unleash such power.

\vspace{5pt}\noindent\textbf{Discussion.} Under the framework of convex parameterization for control, 
the properties of the map between the convex and nonconvex domains, for both costs and feasible sets, are crucial. Even in the unconstrained case, if the map from the convex function to the nonconvex one introduces new stationary points, new tools will be needed to analyze whether a first-order algorithm can avoid these spurious saddle points. 
This leads to the question: what are the most general conditions on this map that preserve the stationary points of the convex function and introduce only additional ``strict saddles" or other benign stationary points? This is an interesting question for future work, for which the LQG problem is an interesting case study; see discussion in Section \ref{sec:LQG_opt_landscape}. It is also interesting to further explore the power of explicit regularization, as in \cite{umenberger2022globally} for OE problems. 
Convex parameterizations are also helpful for exploring other properties, e.g., the convex parameterization for LQG can be used to establish landscape properties of the PO with partial observation, such as the connectivity properties of feasible/sublevel sets \cite{zheng2021analysis,hu2022connect}.

\section{Challenges and Outlook} \label{challenges}

In this article, we have revisited the theoretical foundation of PO for control, and surveyed a number of results that highlight 
properties  of PO algorithms on benchmark control problems. 
Our survey 
has been inspired by recent success and wide range of applications of RL. 
PO provides a 
bridge between control and RL, and can give 
new insights into the design trade-offs between assumptions and data, as well as model-based and model-free synthesis.
Theoretical development in PO for control can help create a renewed interest in the controls community to examine synthesis of dynamic systems from this perspective, that in our view, is more integrated with machine learning. 
We close our discussion with an outlook 
on challenges and open questions 
in bridging the gap between PO theory and real-world control applications.
As an exhaustive list is impossible, we discuss a few challenges and open questions that naturally
reflect our 
perspectives. 

\vspace{5pt}\noindent\textbf{Further connections between 
optimization and control theory.} From our discussion, it is evident that the development of PO theory  requires
further connections between modern optimization theory (that focuses on convergence and complexity of iterative algorithms) with control theory (that rigorously addresses
the notions of optimality, stability, robustness, and safety for closed-loop dynamical systems). For example, it is natural to ask whether 
leveraging results in nonconvex optimization on the complexity of escaping saddle points~\cite{jin2017escape,sun2019saddlemanifold}, 
can give similar guarantees 
for PO with control-theoretic constraints.

\vspace{5pt}\noindent\textbf{Regularization for stability, robustness, and safety.}  In this article, we have covered only $\mathcal{H}_\infty$ robustness constraints.
There is an extensive system theoretic  literature on how to enforce other types of robustness and safety guarantees for control design. 
For example, more general robustness constraints can be formulated via passivity \cite{van2000l2}, dissipativity~\cite{willems72a}, or integral quadratic constraints \cite{megretski1997system}.  In addition, safety can also be induced by modifying the cost function.  
It is important to investigate how to provably pose similar robust/safety guarantees for direct policy search via either explicit regularization (on the cost/constraints) or implicit regularization (via algorithm selection).

\vspace{5pt}\noindent\textbf{Nonlinear systems, deep RL, and perception-based control.} We have focused on reviewing the PO theory centered around linear systems. It is our hope that the insights from such study can be used to guide the algorithmic/theoretical  developments of PO methods for nonlinear control.  Conceptually,  nonlinear control design can still be formulated as $\min_{K\in\mathcal{K}} J(K)$, and convergence to stationary points can still be established given coerciveness. However, how to characterize the feasible set $\mathcal{K}$ in the nonlinear control setting is   unclear in the first place. Quite often the stability/robustness constraints hold only locally for nonlinear systems. 
 It is crucial to investigate how to define and characterize feasible policies for nonlinear control problems. 
 An important class of PO problems arise in
deep RL for end-to-end perception-based control. The theoretical properties of PO methods on such problems remain largely unknown. In this case, the geometry of the feasible set can become even more complicated due to the presence of the perception modality.

\vspace{5pt}\noindent\textbf{Multi-agent systems and decentralized control.} 
Decentralized control of multi-agent systems  has a long history in control theory \cite{sandell1975survey,tsitsiklis1984problems} and also 
connects to the partially observable setting 
since each single agent cannot observe the full system state. 
PO theory for such control tasks requires further investigation.
There has been some recent progress 
\cite{feng2019exponential,li2021distributed,furieri2020learning}. For example, \cite{feng2019exponential} showed that there can be exponential number of connected component for the feasible set; \cite{furieri2020learning} then established the  global convergence and sample complexity 
under the quadratic invariance condition  \cite{rotkowitz2005characterization}. It would be interesting to explore other conditions as well as algorithm design principles that admit global convergence of PO methods for decentralized control. It is especially imperative to  develop PO methods that scale with a large number of agents. PO has also been studied under the multi-agent {\it game-theoretic} settings, 
including  the general-sum LQ dynamic games \cite{mazumdar2019policy}, with negative non-convergence results, and in LQ mean-field games  \cite{fu2019actor,carmona2019linear,wang2021global,carmona2020policy}, where the number of agents is very large and approximated by infinity. It would be interesting to further explore the PO theory in other dynamic game settings with control implications.

\vspace{5pt}\noindent\textbf{Integration of model-based and model-free methods.}  Model-based and model-free methods are both important for control design \cite{tsiamis2022statistical}.
One one hand, there is a recent trend that examines the LQR problem as a benchmark for learning-based control, starting from the work \cite{dean2017sample}, and it has been shown that model-based methods can be more sample-efficient  in  this case from an asymptotic viewpoint \citep{tu2019gap}.
On  the other hand, model-free methods can be more flexible for complex tasks such as perception-based control. As such, it is an important future direction to investigate how to integrate model-free and model-based methods to achieve the best of both worlds, especially for controlling systems which are only partially understood or parameterized. It is also expected that such an integrated approach will lead to developments in new settings that further connect learning and control theory, e.g. online  control with regret guarantees  \cite{lale2020explore,chen2021black,simchowitz2020naive,simchowitz2020improper,pmlr-v97-agarwal19c}.

\vspace{5pt}\noindent\textbf{New PO formulations from ML.}  
Many new tasks arising in machine learning for control can also be formulated as PO.
For example, imitation learning for control can be formulated as PO with control-theoretic constraints \cite{palan2020fitting,havens2021imitation,yin2021imitation,tu2022sample}.
Similarly, transfer learning for linear control can be studied as PO if we modify the cost function properly \cite{molybog2021does}. In the context of control, PO conveniently provides a general paradigm for formulating imitation learning and transfer learning tasks.
It will be interesting to investigate the convergence theory of gradient-based algorithms for such problems.

\begin{summary}[SUMMARY POINTS]
\begin{enumerate}
\item Thanks to the coerciveness and gradient dominance properties, PO for LQR leads to a nonconvex problem which can still be solved using the gradient method provably. 
\item In the state-feedback setting, advanced PO methods can be guaranteed to achieve global convergence on linear risk-sensitive/robust control tasks. 
\item In the partial observation setting, optimization landscape provides important clues for performance of PO methods.  
\item There are fundamental connections between the convex formulations of optimal/robust control tasks and the PO landscape.
\end{enumerate}
\end{summary}

\begin{issues}[FUTURE ISSUES]
\begin{enumerate}
\item Many new results on complexity of escaping saddles and finding stationary points for unconstrained optimization may be extended to PO. 
\item Advanced regularization techniques are needed for robustness and safety in general.
\item PO theory for nonlinear or perception-based control remains largely open.
\item Scalability is an important issue for PO in multi-agent decentralized control.
\item More study is needed to integrate model-based and model-free methods.
\item There are new PO problems in ML for control (e.g., imitation/transfer learning).
\end{enumerate}
\end{issues}

\section*{DISCLOSURE STATEMENT}

The authors are not aware of any affiliations, memberships, funding, or financial holdings that
might be perceived as affecting the objectivity of this review.

\vspace{-0.1in}
\section*{ACKNOWLEDGMENTS}

The research of MM is supported by grants AFOSR FA9550-20-1-0053 and NSF ECCS-2149470. MM acknowledges discussions and contributions from Jingjng Bu, Shahriar Talebi, Sham Kakade, and Rong Ge. The research of Li is supported by ONR YIP: N00014-19-1-2217, AFOSR YIP: FA9550-18-1-0150, and NSF AI Institute 2112085. Li acknowledges discussions and contributions from Yang Zheng, Yujie Tang, and Yingying Li.
The work of BH is supported by the NSF award CAREER-2048168.
BH acknowledges discussions with Peter Seiler, Geir Dullerud, Xingang Guo, Aaron Havens, Darioush Keivan, Yang Zheng, Javad Lavaei, Mihailo Jovanovic,
and Michael Overton. 
The work of KZ is supported by Simons-Berkeley Research Fellowship. KZ acknowledges discussions with  Max Simchowitz. MF acknowledges grants NSF TRIPODS II-DMS 2023166, CCF 2007036, CCF 2212261, AI Institute 2112085, as well as discussions with Yue Sun, Sham Kakade, and Rong Ge. Research of TB is supported in part by AFOSR Grant FA9550-19-1-0353, and US Army Research Laboratory (ARL) Cooperative Agreement W911NF-17-2-0196.

\vspace{-0.1in}

\bibliographystyle{ar-style3.bst}
\bibliography{main}

\end{document}